
\documentclass[11pt,american]{article}
\usepackage{url}            
\usepackage{booktabs}       
\usepackage{amsfonts}       
\usepackage{nicefrac}       
\usepackage{microtype}      
\usepackage{stmaryrd}
\usepackage{multirow}
\usepackage{setspace}
\usepackage{xr}
\usepackage{pdfpages}
\usepackage{mathptmx}

\usepackage[T1]{fontenc}
\usepackage[latin9]{inputenc}
\usepackage[executivepaper]{geometry}
\geometry{verbose,tmargin=2cm,lmargin=2.5cm,rmargin=2.5cm,headsep=1cm}
\usepackage{fancyhdr}
\pagestyle{fancy}
\setlength{\parskip}{\smallskipamount}
\setlength{\parindent}{0pt}
\setlength{\headheight}{13.59999pt}
\usepackage{amsmath}
\usepackage{amssymb}
\usepackage{stackrel}
\usepackage{setspace}
\usepackage{hyperref}
\usepackage{amsthm}
\usepackage{titlesec}
\usepackage{hyphenat}
\usepackage{float}
\usepackage{xcolor}

\usepackage{makecell}
\usepackage{adjustbox,lipsum}

\makeatletter

\newcommand{\lyxmathsym}[1]{\ifmmode\begingroup\def\b@ld{bold}
  \text{\ifx\math@version\b@ld\bfseries\fi#1}\endgroup\else#1\fi}


\usepackage{ytableau}
\usepackage[enableskew]{youngtab}
\DeclareMathAlphabet{\altmathcal}{OMS}{cmsy}{m}{n}
\usepackage{mathptmx}
\usepackage{array}
\usepackage{graphicx}
\newtheorem{proposition}{Proposition}[section]
\newtheorem{example}[proposition]{Example}
\newtheorem{conjecture}[proposition]{Conjecture}
\newtheorem{theorem}[proposition]{Theorem}
\newtheorem{corollary}[proposition]{Corollary}
\newtheorem{lemma}[proposition]{Lemma}
\newtheorem{claim}[proposition]{Claim}
\theoremstyle{definition}
\newtheorem{definition}[proposition]{Definition}
\theoremstyle{definition}
\newtheorem{remark}[proposition]{Remark}

\newtheorem{observation}[proposition]{Observation}
\newtheorem{problem}[proposition]{Problem}

\theoremstyle{plain}
\newtheorem*{theorem*}{Theorem}
\newtheorem*{lemma*}{Lemma}
\usepackage{glossaries}

\makeatother
\hypersetup{
  colorlinks,
  citecolor=violet,
  linkcolor=blue,
  urlcolor=blue}

\begin{document}

\fancyhead{}
\fancyhead[C]{\textbf{}}

\fancyhead[L]{\nouppercase{\leftmark}}
\fancyhead[R]{\nouppercase{\rightmark}}

\author{Alon Romano}
\date{ }
\title{\onehalfspacing{}On the $T$-ideal generated by the identity $f=x^{n}$}
\maketitle

\begin{abstract}
 
\noindent Let $\mathbb{F}\left\langle X\right\rangle $ denote the free non-unitary associative algebra generated by $X=\left\{ x_{1},x_{2},...\right\} $
over a field $\mathbb{F}$ of characteristic zero, and let $V_{m}=V_{m}\left(x_{1},..,x_{m}\right)\leq\mathbb{F}\left\langle X\right\rangle $ be the vector space of all multilinear polynomials of degree $m$.
For any natural numbers $n\leq m$, we let $W_{n,m}$ denote the subspace $V_{m}\cap\left(x^{n}\right)^{T}$,
where $\left(x^{n}\right)^{T}$ is the $T$-ideal generated by the
polynomial $x^{n}$. It is well known that $W_{n,m}$ is an $S_{m}$-representation,
so one can decompose 
\begin{gather*}
W_{n,m}\cong\underset{\lambda\vdash m}{\bigoplus}m^{\lambda}S^{\lambda},
\end{gather*}
where $S^{\lambda}$ is the irreducible representation of $S_{m}$
corresponding to the partition $\lambda$. Given a partition $\lambda=\left(\lambda_{1},\lambda_{2},...,\lambda_{r}\right)\vdash m$, we define its length by $\ell\left(\lambda\right)=r$. In addition, for every $\ensuremath{s\in\mathbb{Z}_{\geq0}}$, we set $\lambda^{\left(s\right)}=\left(\lambda_{1}+s,\lambda_{2},...,\lambda_{r}\right)\vdash m+s$. 

\nohyphens{We show that if $K\in\mathbb{N}=\left\{ 1,2,...\right\} $ is fixed, the multiplicity of
$S^{\lambda}$ in $W_{n,n+K}$ equals zero whenever $\ell\left(\lambda\right)>2K+1$. We also show that the multiplicities
of $W_{n,n+K}$ must stabilize, in the following sense: there exists
$N\in\mathbb{N}$ such that, if $W_{N,N+K}$ decomposes as $\underset{\lambda\vdash N+K}{\bigoplus}m^{\lambda}S^{\lambda}$,
then for every $n\geq N$ we have} 
\begin{gather*}
W_{n,n+K}\protect\cong\underset{\lambda\vdash N+K}{\bigoplus}m^{\lambda}S^{\lambda^{\left(n-N\right)}}.
\end{gather*}
The proof uses the
close connection between $S_{m}$-representations and $GL_{k}\left(\mathbb{F}\right)$-representations.
Some general estimates for $\text{dim}\left(W_{n,n+K}\right)$ are
also suggested.
\end{abstract}
 
\tableofcontents

\titlelabel{\thetitle.\quad}
\section{ Main Definitions and Motivation }

Throughout this paper, $\mathbb{F}$ stands for a field of characteristic
zero, and all vector spaces and algebras are taken over $\mathbb{F}$. In addition, we always assume that our algebras are non-unitary. For
a background on PI-algebras and details of the results discussed in
this section, we refer to \cite{drensky2012polynomial} or \cite{giambruno2005polynomial}.

A polynomial identity of an associative algebra $A$ is a polynomial
$f\left(x_{1},...,x_{n}\right)$ in the free associative algebra $\mathbb{F}\left\langle X\right\rangle $
on a countable set $X=\left\{ x_{1},x_{2},...\right\} $ of (non-commutative) variables, such that
$f\left(a_{1},...,a_{n}\right)=0$ for all $a_{1},...,a_{n}\in A$.
In this case, we say that $A$ satisfies the identity $f$. 
Sometimes we will use other symbols (e.g.,  $x,y,z,$ etc.) for the elements of $X$.

A polynomial identity $g\left(x_{1},...,x_{m}\right)\in\mathbb{F}\left\langle X\right\rangle $
is called a \textbf{consequence}  of the polynomial identity $h\left(x_{1},...,x_{k}\right)\in\mathbb{F}\left\langle X\right\rangle $,
if any algebra satisfying the identity $h\left(x_{1},...,x_{k}\right)=0$ 
satisfies the identity $g\left(x_{1},...,x_{m}\right)=0$.

\begin{definition}
A $T$-ideal is an ideal $I\triangleleft\mathbb{F}\left\langle X\right\rangle $
such that $\psi\left(I\right)\subset I$ for all homomorphisms $\psi:\mathbb{F}\left\langle X\right\rangle \rightarrow\mathbb{F}\left\langle X\right\rangle$.
Given $f\in\mathbb{F}\left\langle X\right\rangle $, we denote by
$\left(f\right)^{T}$ the smallest $T$-ideal containing $f$. 

\end{definition}

It is well known  (see, e.g., Prop. 1.2.8 in \cite{drensky2012polynomial}) that in characteristic zero, every $T$-ideal $I\triangleleft\mathbb{F}\left\langle X\right\rangle $ is completely determined by its multilinear elements. Hence, denoting the symmetric group on $[m]=\left\{ 1,...,m\right\} $ by $S_{m}$, and define
\begin{gather*}
V_{m}:=\text{span}_{\mathbb{F}}\left\{ x_{\sigma\left(1\right)}\cdot\cdot\cdot x_{\sigma\left(m\right)}:\sigma\in S_{m}\right\} \subseteq\mathbb{F}\left\langle X\right\rangle ,
\end{gather*}

the structure of $I$ is completely determined by $\left\{ V_{m}\cap I\right\} _{m\in\mathbb{N}}$. 

Our goal in this paper is to gain a better understanding of the subspace
\begin{gather*}
W_{n,m}:=V_{m}\cap\left(x^{n}\right)^{T}\leq V_{m}.
\end{gather*}

\begin{example}
\label{once_ag}
Consider $V_{2}=\text{span}_{\mathbb{F}}\left\{ x_{1}x_{2},x_{2}x_{1}\right\} $.
We claim that $W_{2,2}=\text{span}_{\mathbb{F}}\left\{ x_{1}x_{2}+x_{2}x_{1}\right\} $.
Indeed, since 
\begin{gather*}
x_{1}x_{2}+x_{2}x_{1}=\left(x_{1}+x_{2}\right)^{2}-x_{1}^{2}-x_{2}^{2}\in\left(x^{2}\right)^{T},
\end{gather*}

we have $W_{2,2}\supset\text{span}_{\mathbb{F}}\left\{ x_{1}x_{2}+x_{2}x_{1}\right\} $.
On the other hand, the identity $f=x^{2}$ does not imply the identity
$g=xy$ (hence $x_{2}x_{1},x_{1}x_{2}\notin\left(x^{2}\right)^{T}$):
take any three-dimensional vector space $A$ with a basis $\left\{ a,b,c\right\} $,
and give to $A$ a structure of algebra by setting 
\begin{flalign*}
a\circ a & =b\circ b=c\circ c=0\\
a\circ b & =c=-b\circ a\\
a\circ c & =0=c\circ a\\
b\circ c & =0=c\circ b.
\end{flalign*}

We see that $A$ (which is actually a Lie algebra) satisfies the identity $x^{2}=0$, but not the identity
$xy=0$.

\end{example}

\subsection{Applying Representation Theory}

In a series of papers in the 1970s, Regev made a remarkable connection
between the theory of $S_{m}$-representations and the theory of polynomial
identities (in characteristic zero). We will follow this approach. Note that  $S_{m}$ is acting on $V_{m}$ from the left by 
\begin{gather*}
\sigma.\left(x_{i_{1}}\cdot\cdot\cdot x_{i_{m}}\right)=x_{\sigma\left(i_{1}\right)}\cdot\cdot\cdot x_{\sigma\left(i_{m}\right)}
\end{gather*}
and from the right by 
\begin{gather*}
\left(x_{i_{1}}\cdot\cdot\cdot x_{i_{m}}\right)\sigma=x_{i_{\sigma\left(1\right)}}\cdot\cdot\cdot x_{i_{\sigma\left(m\right)}}.
\end{gather*}
 Define a map $\varphi:\mathbb{F}\left[S_{m}\right]\rightarrow V_{m}$ by setting
\begin{gather*}
\varphi:\underset{\sigma\in S_{m}}{\sum}\alpha_{\sigma}\cdot\sigma\mapsto\underset{\sigma\in S_{m}}{\sum}\alpha_{\sigma}\cdot x_{\sigma\left(1\right)}\cdot\cdot\cdot x_{\sigma\left(m\right)}.
\end{gather*}

It is clear that $\varphi$ is a linear isomorphism, and moreover,
since for any $\sigma,\tau\in S_{m}$
\begin{align}
  \sigma\left(x_{\tau\left(1\right)}\cdot\cdot\cdot x_{\tau\left(m\right)}\right)=x_{\sigma\tau\left(1\right)}\cdot\cdot\cdot x_{\sigma\tau\left(m\right)}=\left(x_{\sigma\left(1\right)}\cdot\cdot\cdot x_{\sigma\left(m\right)}\right)\tau, \label{left_&_right}
\end{align}
$V_{m}$ is isomorphic to $\mathbb{F}\left[S_{m}\right]$ as an $\mathbb{F}\left[S_{m}\right]$-bimodule.
Consequently, we can identify elements in $\mathbb{F}\left[S_{m}\right]$
with their images in  $V_{m}$.

A key observation is that the left action of $S_{m}$ on $V_{m}$
satisfies 
\begin{gather*}
\sigma.f\left(x_{1},...,x_{m}\right)=f\left(x_{\sigma\left(1\right)},...,x_{\sigma\left(m\right)}\right)
\end{gather*}
for any $f\in V_{m}$, which implies that for any $T$-ideal $I\triangleleft\mathbb{F}\left\langle X\right\rangle $,
$V_{m}\cap I$ becomes a left submodule of $\mathbb{F}\left[S_{m}\right]$ (since by definition
$I$ is invariant under substitutions). In particular, 
\begin{gather*}
W_{n,m}=V_{m}\cap\left(x^{n}\right)^{T}
\end{gather*}
 is a left submodule of $\mathbb{F}\left[S_{m}\right]$.
\nohyphens{It is well known that any irreducible representation of $S_{m}$ corresponds
to a partition $\lambda\vdash m$, so we denote these representations
by $\left\{ S^{\lambda}\mid\lambda\vdash m\right\} $. The multiplicity
of $S^{\lambda}$ in $W_{n,m}$ is denoted by $m_{n,m}^{\lambda}$,
(or just by $m^{\lambda}$, when $n$ and $m$ are clear from the context). In this notation, we have
an isomorphism of $S_{m}$-representations}
\begin{align}
  W_{n,m}\cong\underset{\lambda\vdash m}{\bigoplus}m_{n,m}^{\lambda}S^{\lambda}. \label{deco}
\end{align}

\begin{definition}
\label{derived}
Let $\lambda=\left(\lambda_{1},\lambda_{2},...,\lambda_{r}\right)$
be a partition of $N$. For any $n\geq N$ and $\mu\vdash n$, we say that
$\mu$ is \textbf{derived} from $\lambda$, and write $\mu=\lambda^{\left(n-N\right)}$,
if 
\begin{gather*}
\mu=\left(\lambda_{1}+\left(n-N\right),\lambda_{2},...,\lambda_{r}\right).
\end{gather*}
\end{definition}
This definition has a simple visualization: denoting the Young diagram
associated to $\lambda$ by $D_{\lambda}$ (see Definition \ref{Partition} below), then $\mu\vdash n$ is derived from $\lambda\vdash N$
if and only if $D_{\mu}$ is obtained from $D_{\lambda}$ by adjoining
$n-N$ boxes to the right upper corner of $D_{\lambda}$.

\begin{figure}[h]
    \centering
    \includegraphics[width=0.5\textwidth]{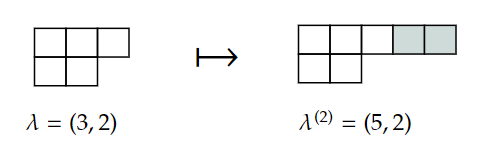}
    \caption{The partition $\mu=\left(5,2\right)$ is derived from $\lambda=\left(3,2\right)$.}
    \label{fig:mesh0}
\end{figure}

\begin{definition}
\label{derived_def}
Suppose that an $S_{N}$-representation $W$ decomposes as $W\cong\underset{\lambda\vdash N}{\bigoplus}m^{\lambda}S^{\lambda}$,
and let $n\geq N$. We say that an $S_{n}$-representation $U$ is \textbf{derived} from $W$, if $U$ decomposes as 
\begin{gather*}
U\cong\underset{\lambda\vdash N}{\bigoplus}m^{\lambda}S^{\lambda^{\left(n-N\right)}}.
\end{gather*}
In other words, for every $\mu\vdash n$, the multiplicity of $S^{\mu}$
in $U$ equals zero, unless $\mu$ is derived (in the sense of Definition \ref{derived}) from some $\lambda\vdash N$,
in which case it equals the multiplicity of $S^{\lambda}$ in $W$.
\end{definition}
\subsection{Main result}
Our main theorem in this paper is the following one:

\begin{theorem}
Fix $K\in\mathbb{N}$. There exists $N\in\mathbb{N}$ such that
for every $n\geq N$, $W_{n,n+K}$
is derived from $W_{N,N+K}$.
\end{theorem}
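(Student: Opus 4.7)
The plan is to transfer the question into polynomial $GL_k(\mathbb{F})$-representation theory for $k=2K+1$. By the length bound stated in the abstract, only $\lambda\vdash n+K$ with $\ell(\lambda)\le k$ contribute to $W_{n,n+K}$, so one may work inside the free subalgebra $F_k:=\mathbb{F}\langle x_1,\dots,x_k\rangle$. The classical correspondence between $S_m$-multiplicities in multilinear components and $GL_k$-multiplicities of irreducible polynomial representations gives, for each $\lambda$ with $\ell(\lambda)\le k$,
\[
m^{\lambda}_{n,n+K}\;=\;\dim\bigl((F_k\cap(x^n)^T)_{n+K}\bigr)^{\lambda}_{\mathrm{hwv}},
\]
the dimension of the space of highest weight vectors of weight $\lambda$, where the raising operators $E_{ij}\colon x_j\mapsto x_i$ ($i<j$) act as derivations on $F_k$. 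The theorem thus becomes a stabilization statement for these hwv-dimensions inside a decreasing family of $T$-ideals.

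\paragraph{Main steps.} First, rephrase the conclusion: for each fixed ``tail'' $\tau=(\lambda_2,\dots,\lambda_k)$, the hwv-dimension of weight $(n+K-|\tau|,\tau_2,\dots,\tau_k)$ in $F_k\cap(x^n)^T$ is eventually constant in $n$. An auxiliary estimate, analogous in spirit to the length bound and again provable inside the $GL_k$-framework, shows that only finitely many tails can ever occur with nonzero multiplicity, leaving only finitely many stabilization questions to resolve. Second, produce for each tail a linear map shifting weight $\lambda$ to $\lambda^{(s)}$: since $E_{ij}(x_1^s)=0$ for every raising operator, right (or left) multiplication by $x_1^s$ preserves the hwv condition. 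The element $v\cdot x_1^s$ starts in $(x^N)^T$ whenever $v$ does, and by reorganizing the witnessing linearizations of $x^n$ inside $v$ and absorbing the new factors of $x_1$, one arranges the image to lie in $(x^{N+s})^T$, yielding an injection between the relevant hwv-spaces.

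\paragraph{Main obstacle.} The hard part is showing that this shift map is eventually surjective, so that the hwv-dimension genuinely stabilizes rather than continuing to grow. The proposed route is a Noetherian argument: assemble all hwv-spaces of a fixed tail into a single graded module on which $x_1$-insertion acts, and show that it is finitely generated over a polynomial ring $\mathbb{F}[t]$; its graded ranks must then become constant past some threshold. The key input is the finite-basis property for $T$-ideals in characteristic zero (Specht/Kemer) applied to $(x^n)^T$ and its relatives, which bounds the ``new'' identities produced at each shift. Taking a single threshold over the finite set of tails then produces the uniform $N$ required by the statement.
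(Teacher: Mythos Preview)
Your reduction to $GL_k$-highest-weight vectors with $k=2K+1$ and the observation that only finitely many tails $\tau$ occur are both correct and match the paper's strategy (the paper extracts this from a surjection $[A_k^{\otimes_s n}]^{(n+K)}\twoheadrightarrow [A_k]^{(n+K)}\cap(x^n)^T$ and a stabilization of the symmetric-tensor side via Young's rule). The genuine gap is in your shift map.

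Right multiplication by $x_1^s$ does preserve the hwv property and raises the weight correctly, but it sends $(x^N)^T$ only into $(x^N)^T$, not into $(x^{N+s})^T$. Concretely, $P_N(m_1,\dots,m_N)\cdot x_1^s$ has all the new $x_1$'s at the far right, whereas membership in $(x^{N+s})^T$ requires expressions built from $P_{N+s}$, which distribute the $x_1$'s throughout. Your proposed fix---``reorganize the witnessing linearizations and absorb the new factors of $x_1$''---amounts to replacing each $P_N(m_1,\dots,m_N)$ by $P_{N+s}(m_1,\dots,m_N,x_1,\dots,x_1)$, but this is \emph{not a well-defined linear map}: the same element of $(x^N)^T$ admits many presentations as a combination of $P_N$'s, and different presentations yield different outputs (the paper points this out explicitly). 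So you have neither a well-defined shift map nor, a fortiori, an $\mathbb F[t]$-module structure on $\bigoplus_n(\text{hwv spaces})$; the Noetherian/Kemer step then has nothing to act on. Kemer's theorem in any case concerns a single $T$-ideal, not the infinite family $\{(x^n)^T\}_n$, and it is unclear how it would bound new hwv's across varying $n$.

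The paper circumvents both problems. For the upper bound it proves $\sum_\lambda m^\lambda_{n,n+K}\le D$ uniformly in $n$ directly from the symmetric-tensor surjection, so no surjectivity of a shift map is needed. For the lower bound it does not build a linear map at all: it shows that if $e_TP_{O_1},\dots,e_TP_{O_l}\in W_{N,N+K}$ are independent then, after Regev's substitution $\mathrm{Sub}^T_{(x,y)}$, the relevant coefficients of the shifted polynomials $P^{(s)}$ are of the form $p(s)\cdot s!$ for honest polynomials $p$; hence a certain $l\times l$ determinant is a polynomial in $s$ that is nonzero at $s=0$ and therefore nonzero for all large $s$, giving $l$ independent elements in $e_{T^{(s)}}W_{N+s,N+s+K}$. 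Boundedness plus this ``generally increasing'' property forces the integer sequence of multiplicities to be eventually constant. If you want to salvage your approach, the missing ingredient is precisely this coefficient-polynomiality argument in place of the nonexistent linear shift.
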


We will prove this theorem by using the following theorems.

\begin{theorem}
Fix $K\in\mathbb{N}$, and keep the notations from (\ref{deco}).

\textbf{(1)} For every $\lambda=\left(\lambda_{1},...,\lambda_{r}\right)\vdash n+K$
such that $r>2K+1$, $m_{n,n+K}^{\lambda}=0$.

\textbf{(2)} There exists an integer $D$ (depending only on $K$), such
that for every $n$
\begin{gather*}
\underset{\lambda\vdash n+K}{\sum}m_{n,n+K}^{\lambda}\leq D.
\end{gather*} 
\end{theorem}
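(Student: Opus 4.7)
The plan is to prove both parts simultaneously by first establishing an explicit spanning set for $W_{n, n+K}$ and then analyzing it via Frobenius reciprocity and Pieri's rule. The starting point is the complete linearization
\[
p_n(y_1,\ldots,y_n) := \sum_{\sigma \in S_n} y_{\sigma(1)}\cdots y_{\sigma(n)},
\]
the fundamental multilinear consequence of $x^n$. Using the standard characteristic-zero fact that a $T$-ideal is generated, under substitutions and two-sided multiplication, by the full linearizations of its generators, one expects $W_{n, n+K}$ to be spanned over $\mathbb{F}$ by the polynomials
\[
E = L \cdot p_n(M_1, \ldots, M_n) \cdot R,
\]
where $L, M_1, \ldots, M_n, R$ are monomials in pairwise disjoint subsets of $\{x_1,\ldots,x_{n+K}\}$ whose union is the whole set, each $M_j$ nonempty, and $L, R$ possibly empty. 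Justifying this spanning set cleanly---checking that the multilinear projection of arbitrary consequences of $x^n$ reduces to such $E$'s---is the main preliminary step.

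For part $(1)$, set $s_1 := \#\{j : |M_j|=1\}$. The identity $|L|+|R|+\sum_j(|M_j|-1) = K$ together with $|M_j|\geq 1$ forces $s_1 \geq n - K$. Since $p_n$ is symmetric in its arguments, $E$ is fixed by the subgroup $S_{s_1}\leq S_{n+K}$ permuting the variable indices that fill the length-one $M_j$'s. Consequently the cyclic submodule $\langle E\rangle \leq V_{n+K}$ is a quotient of $\mathrm{Ind}_{S_{s_1}}^{S_{n+K}}\mathbf{1}$. Pieri's rule, applied via the transitivity $S_{s_1}\leq S_{s_1}\times S_{n+K-s_1}\leq S_{n+K}$, then shows that $S^\lambda$ can occur only when $\lambda/\nu$ is a horizontal $s_1$-strip for some $\nu\vdash n+K-s_1$; in particular $\lambda_1\geq s_1\geq n-K$. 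This forces $\lambda_2 + \lambda_3 + \cdots \leq 2K$ and hence $\ell(\lambda)\leq 2K+1$, giving $(1)$.

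For part $(2)$, I would group the spanning elements by their \emph{type}, meaning the pair $(|L|,|R|)$ together with the multiset $\{|M_j|\}_{j=1}^{n}$. The relation $|L|+|R|+\sum_{j}(|M_j|-1)=K$ shows that the number of types depends only on $K$. All spanning elements of a single type form one $S_{n+K}$-orbit and therefore generate the same cyclic submodule, so $W_{n, n+K}$ is a sum of a $K$-dependent but $n$-independent number of such submodules. The same Pieri analysis bounds the multiplicity of each $S^\lambda$ in one such submodule by $\sum_{\nu\vdash n+K-s_1} f^\nu$, a quantity depending only on $K$ since $n+K-s_1\leq 2K$; moreover the $\lambda$'s that actually contribute are parametrized by their ``tail'' $(\lambda_2,\lambda_3,\ldots)$ of total size at most $2K$, so only boundedly many of them appear at all. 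Summing over types yields the required constant $D$ depending only on $K$.
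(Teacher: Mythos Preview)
Your argument is correct and reaches the same combinatorial endpoint as the paper, but by a genuinely different route. The paper proves this statement by passing to $GL_k(\mathbb{F})$-representations: via the Berele--Drensky correspondence, the multiplicities $m_{n,n+K}^\lambda$ agree with those of $[A_k]^{(n+K)}\cap(x^n)^T$, which by a result of Drensky--Benanti is a quotient of the symmetric tensor power $\left[A_k^{\otimes_s n}\right]^{(n+K)}$. The paper then shows this symmetric power decomposes as $\bigoplus_i V^{(n-K+c_i)}\otimes M_i$ with $\text{deg}(M_i)\leq 2K$, and applies Young's rule to each summand to get both the row bound and the uniform multiplicity bound.

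You instead stay entirely on the symmetric-group side: the explicit spanning set exhibits $W_{n,n+K}$ as a sum of boundedly many cyclic modules, each a quotient of $\mathrm{Ind}_{S_{s_1}}^{S_{n+K}}\mathbf{1}$ with $n+K-s_1\leq 2K$, and Pieri's rule finishes. The two proofs are essentially Schur--Weyl dual to one another---Young's rule for $GL_k$ is Pieri's rule for $S_m$---so the underlying combinatorics coincides. Your approach is more self-contained and avoids the $GL_k$ and symmetric-tensor machinery entirely; the paper's route, on the other hand, fits into a structural framework it reuses for the later stabilization theorem, and gets the spanning set for free from the Higman--Hall result (which in fact shows that your flanking factors $L,R$ can be dropped, though keeping them does not affect your bounds).
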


\begin{theorem}
\textit{Fix $K\in\mathbb{N}$. There exists an integer $M_{K}>0$ such that,
for every $N\geq M_{K}$ and \linebreak $\lambda\vdash N+K$, if $n\geq N$ is
large enough, the multiplicity of $S^{\lambda^{\left(n-N\right)}}$
in $W_{n,n+K}$ is at least the multiplicity of $S^{\lambda}$ in
$W_{N,N+K}$.}
\end{theorem}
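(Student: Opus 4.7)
The plan is to reformulate the problem via Schur--Weyl duality in terms of $GL_{k}(\mathbb{F})$-representations. Taking $k\geq 2K+1$ (which suffices by the preceding theorem, part (1)), one has $m_{n,n+K}^{\mu}=\dim\mathrm{HW}^{\mu}(\widetilde{W}_{n,n+K})$, where $\widetilde{W}_{n,n+K}$ denotes the space of polynomials of total degree $n+K$ in $\mathbb{F}\langle x_{1},\ldots,x_{k}\rangle$ lying in $(x^{n})^{T}$, and $\mathrm{HW}^{\mu}(\cdot)$ is the subspace of $GL_{k}$-highest-weight vectors of weight $\mu$. Thus the theorem reduces to constructing an injection $\mathrm{HW}^{\lambda}(\widetilde{W}_{N,N+K})\hookrightarrow\mathrm{HW}^{\lambda^{(n-N)}}(\widetilde{W}_{n,n+K})$ for $N\geq M_{K}$ and all sufficiently large $n$.

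The basic tool is a structural description, refining the argument behind the preceding theorem: for $n>K$, every element of $\widetilde{W}_{n,n+K}$ is a linear combination of products $q(x)\cdot p(x)^{n}\cdot r(x)$ with $p$ linear and $\deg q+\deg r=K$, because the identity $\deg q+n\deg p+\deg r=n+K$ forces $\deg p\leq 1$ once $n>K$. Given a highest-weight vector $v=\sum_{i}\alpha_{i}q_{i}p_{i}^{N}r_{i}\in\widetilde{W}_{N,N+K}$ of weight $\lambda$, I would define a candidate lift by raising the exponent and projecting onto the desired weight: $\widetilde{v}:=\pi_{\lambda^{(n-N)}}\bigl(\sum_{i}\alpha_{i}q_{i}p_{i}^{n}r_{i}\bigr)$, where $\pi_{\mu}$ denotes projection onto the multidegree-$\mu$ component. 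Since $(x^{n})^{T}$ is $GL_{k}$-stable and hence splits as a direct sum of weight spaces, this projection preserves $T$-ideal membership, so $\widetilde{v}\in\widetilde{W}_{n,n+K}$. A Leibniz-rule computation with the raising operators $E_{ij}=x_{i}\partial/\partial x_{j}$ ($i<j$), exploiting the fact that $v$ is annihilated by them and noting that the extra variables introduced by passing from $p_{i}^{N}$ to $p_{i}^{n}$ only affect the first-row weight, shows that $\widetilde{v}$ is likewise killed by all raising operators, hence is a highest-weight vector of weight $\lambda^{(n-N)}$.

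The main obstacle will be to show that $v\mapsto\widetilde{v}$ is well-defined (independent of the presentation $v=\sum_{i}\alpha_{i}q_{i}p_{i}^{N}r_{i}$) and injective. Both properties reduce to the claim that every linear relation among the spanning family $\{q_{i}p_{i}^{N}r_{i}\}$ inside $\widetilde{W}_{N,N+K}$ lifts, after weight projection, to the corresponding relation among $\{q_{i}p_{i}^{n}r_{i}\}$ inside $\widetilde{W}_{n,n+K}$. This is precisely where the threshold $M_{K}$ enters: combining the uniform upper bound $\sum_{\lambda}m_{n,n+K}^{\lambda}\leq D$ from the preceding theorem, part (2), with a Noetherian/finite-generation argument applied to the sequence of relation modules indexed by $N$, one should be able to show that the relations stabilize once $N$ exceeds a finite threshold $M_{K}$. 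Above this threshold the lift becomes a well-defined injection of highest-weight spaces, yielding the desired inequality $m_{n,n+K}^{\lambda^{(n-N)}}\geq m_{N,N+K}^{\lambda}$.
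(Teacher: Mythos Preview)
Your proposal has a genuine gap at exactly the point you identify as ``the main obstacle.'' The difficulty is not merely technical: the suggested resolution---``a Noetherian/finite-generation argument applied to the sequence of relation modules''---is not a proof, and I do not see how to make it one. You never specify what the relation modules are, over which ring they are modules, or why Noetherianity would force the relations among the $q_{i}p_{i}^{N}r_{i}$ to stabilize in the precise sense you need (that every relation at level $N$ lifts to level $n$, \emph{and} conversely for injectivity). The bound $\sum_{\lambda}m_{n,n+K}^{\lambda}\leq D$ controls the number of irreducible summands, not the linear relations among a highly redundant spanning set, so it does not obviously feed into such an argument. There is also a subsidiary issue with the highest-weight claim: the assertion that ``the extra variables introduced by passing from $p_{i}^{N}$ to $p_{i}^{n}$ only affect the first-row weight'' is only literally true when $p_{i}=x_{1}$; for a general linear form $p_{i}$ the Leibniz computation you sketch does not immediately give $E_{ij}\widetilde{v}=0$ without further work.

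The paper's proof takes a completely different, concrete route. It works not with the presentation $q\,p^{n}r$ but with the explicit spanning set $\{P_{O}:O\in\Omega_{n,n+K}\}$, where $P_{O}=P_{n}(x_{A_{1}},\ldots,x_{A_{n}})$. By Lemma~\ref{mult_is_dim}, the multiplicity $m^{\lambda}$ equals $\dim(e_{T}W_{n,n+K})$, and via Regev's substitution trick (Corollary~\ref{regev_col}) linear independence of $e_{T}P_{O_{1}},\ldots,e_{T}P_{O_{l}}$ is equivalent to linear independence of certain polynomials in the $y$-variables obtained by the tableau substitution $\text{Sub}^{T}_{(\text{x},\text{y})}$. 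The heart of the argument is then a purely combinatorial lemma (Lemma~\ref{tecnical}): for $N$ large enough, the coefficient of the monomial $u^{(s)}$ in $P^{(s)}$ equals $p(s)\cdot s!$ for a genuine polynomial $p\in\mathbb{F}[x]$. Consequently the determinant testing linear independence is itself a polynomial in $s$; being nonzero at $s=0$, it has only finitely many roots, so independence persists for all large $s$. This ``coefficients are polynomial in $s$'' mechanism is the missing idea in your approach---it is what replaces the unspecified stabilization argument and makes the injectivity step actually go through.
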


We believe, however, that this theorem can be considerably strengthened.

\textbf{Conjecture.}
\textit{In the theorem above, one can take  $M_{K}=1$  and any $n\geq N$.}

\subsection{The motivation for studying \texorpdfstring{$W_{n,m}$}{Lg} }
\nohyphens{Our original motivation for studying the space $W_{n,m}$ was to analyze
the class of nilpotency of algebras satisfying the identity $x^{n}=0$. Let $n$ be a natural number. The classical Nagata-Higman theorem \cite{higman1956conjecture} \cite{10.2969/jmsj/00430296},
which was first proved by Dubnov and Ivanov \cite{dubnov1943abaissement} in 1943, states
that if $A$ is an associative (non-unitary) algebra, and $A$ satisfies
the identity $f=x^{n}$, there exists an integer $m=m\left(n\right)$
such that $A^{m}=0$. In other words, the  identity
$x_{1}\cdot\cdot\cdot x_{m}=0$ is a consequence of the
identity $x^{n}=0$. Consider}  
\begin{gather*}
d\left(n\right):=\text{min}\left\{ m:x_{1}\cdot\cdot\cdot x_{m}=0\text{ is a consequence of  \ensuremath{x^{n}}}=0\right\}.
\end{gather*}

In 1974, Razmyslov \cite{razmyslov1974trace} established the upper bound $d\left(n\right)\leq n^{2}$,
which to this day remains the best known upper bound. For a lower bound, Kuzmin \cite{kuzmin1975nagata} showed that $\frac{n\left(n+1\right)}{2}\leq d\left(n\right)$,
and conjectured that we have $d\left(n\right)=\frac{n\left(n+1\right)}{2}$
for every $n$. As of 2022, this was confirmed only for $n=1,2,3$
by Dubnov and Ivanov \cite{dubnov1943abaissement}, and for $n=4$ by Vaughan-Lee \cite{vaughan1993algorithm}.
Let us reformulate Kuzmin's conjecture in terms of $\text{dim}\left(W_{n,m}\right)$.

\begin{proposition}
\label{lst}
Given $n\in\mathbb{N}$, we have $d\left(n\right)=\text{min}\left\{ m\in\mathbb{N}\mid\text{dim}\left(W_{n,m}\right)=m!\right\} $.
In particular, Kuzmin's conjecture holds if and only if $\text{dim}\left(W_{n,n\left(n+1\right)/2}\right)=\left(n\left(n+1\right)/2\right)!$.
\end{proposition}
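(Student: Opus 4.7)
The plan is to show that the two extremal quantities coincide by establishing, for each fixed $n$ and $m$, the equivalence
\[
\dim(W_{n,m}) = m! \iff x_{1}\cdots x_{m}=0 \text{ is a consequence of } x^{n}=0,
\]
from which the proposition is immediate (and nonvacuous, since Nagata--Higman guarantees some such $m$ exists).

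First, I would observe that $V_{m}$ has dimension exactly $m!$, with basis $\{x_{\sigma(1)}\cdots x_{\sigma(m)} : \sigma\in S_{m}\}$. Since $W_{n,m}\leq V_{m}$ by definition, the equality $\dim(W_{n,m})=m!$ is equivalent to $W_{n,m}=V_{m}$, i.e., to the statement that every monomial $x_{\sigma(1)}\cdots x_{\sigma(m)}$ lies in $(x^{n})^{T}$.

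Next, I would prove the two directions. For the easy direction: if $W_{n,m}=V_{m}$, then in particular $x_{1}\cdots x_{m}\in (x^{n})^{T}$, so $x_{1}\cdots x_{m}=0$ is a consequence of $x^{n}=0$, hence $d(n)\leq m$. For the converse, suppose $x_{1}\cdots x_{m}\in(x^{n})^{T}$. For each $\sigma\in S_{m}$, the variable renaming $x_{i}\mapsto x_{\sigma(i)}$ extends to a homomorphism $\psi_{\sigma}:\mathbb{F}\langle X\rangle\to\mathbb{F}\langle X\rangle$, and since $(x^{n})^{T}$ is a $T$-ideal we have $\psi_{\sigma}(x_{1}\cdots x_{m})=x_{\sigma(1)}\cdots x_{\sigma(m)}\in(x^{n})^{T}$. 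Thus every basis monomial of $V_{m}$ lies in $W_{n,m}$, giving $W_{n,m}=V_{m}$ and so $\dim(W_{n,m})=m!$. Combining both directions shows the two minima agree.

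Finally, for the ``in particular'' clause: by Kuzmin's lower bound $n(n+1)/2\leq d(n)$, the conjectural equality $d(n)=n(n+1)/2$ holds if and only if $d(n)\leq n(n+1)/2$, which by the first part translates exactly into $\dim(W_{n,n(n+1)/2})=(n(n+1)/2)!$. There is no real obstacle here; the whole statement is a repackaging of definitions, and the only subtlety is being careful that the use of $T$-ideal closure under substitutions applies equally when permuting variables among themselves, which is precisely the content of Definition 1.1 restricted to permutations of $\{x_{1},\dots,x_{m}\}$.
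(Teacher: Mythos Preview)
Your proof is correct and follows essentially the same approach as the paper's: both arguments identify $\dim(W_{n,m})=m!$ with $W_{n,m}=V_m$, then with the condition that every monomial $x_{\sigma(1)}\cdots x_{\sigma(m)}$ lies in $(x^n)^T$, and finally with $d(n)\le m$. You are simply a bit more explicit than the paper in invoking the $T$-ideal closure under substitutions to pass from $x_1\cdots x_m\in(x^n)^T$ to all its permuted versions.
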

\begin{proof}
By definition, $d\left(n\right)\leq m$ if and only if
 $g_{\sigma}=x_{\sigma\left(1\right)}\cdot\cdot\cdot x_{\sigma\left(m\right)}$
is a consequence of $f=x^{n}$ for any $\sigma\in S_{m}$, which holds if and only if $g_{\sigma}$
lies in $V_{m}\cap\left(x^{n}\right)^{T}=W_{n,m}$. As $\left\{ x_{\sigma\left(1\right)}\cdot\cdot\cdot x_{\sigma\left(m\right)}\right\} _{\sigma\in S_{m}}$
is a linear basis of $V_{m}$, we have $\text{dim}\left(V_{m}\right)=m!=\text{dim}\left(W_{n,m}\right)$
if and only if $V_{m}=W_{n,m}$, if and only if $g_{\sigma}\in W_{n,m}$
for every $\sigma\in S_{m}$. 
\end{proof}

In light of this proposition, the investigation of $W_{n,m}$ should focus only on the case  $m\leq n^{2}$, since for every $m>n^{2}$
we have $W_{n,m}=V_{m}\cong\mathbb{F}\left[S_{m}\right]$ (by Razmyslov's upper bound),
and the multiplicities of $W_{n,m}$ are well understood in this case.

\subsection{Outline}

This paper is organized as follows.
In Section 2 we provide the needed background from the representation
theory of $S_{m}$ and $GL_{k}\left(\mathbb{F}\right)$, and conclude
some results from the theory of symmetric tensors. This section has been heavily influenced by Drensky and Benanti's paper \cite{benanti1999polynomial}. 
In Section 3 we make some reductions which, together with the results
from Section 2, lead to the main result (Theorem \ref{stab}). These reductions
are proved in section 4. Finally, in Section 5, we give some upper
and lower bounds on $\text{dim}\left(W_{n,m}\right)$, and suggest a conjecture about
the asymptotic behavior of $\text{dim}\left(W_{n,n+K}\right)$ for
a fixed $K$.

Table \ref{tab:label_test} gives a summary of the notations introduced in this paper.

\subsection{Acknowledgments}

This work is part of my master's thesis. I would like to thank my advisor, Prof. Aner Shalev, for his guidance and encouragement throughout this work.

I would also like to thank Niv Levhari for insightful discussions,
and for his assistance in developing a program that greatly eased the calculations. Special thanks also go to the anonymous referee, whose insightful comments immeasurably enriched this paper.

  \begin{table}[h!]
  \centering
  \begin{adjustbox}{max width=\textwidth}
  \begin{tabular}{*{3}{|c}|}
  \hline
  section introduced & notation & description\\
  \hline
  1,2 & $\lambda^{\left(d\right)}$ & \makecell{The partition obtained from $D_{\lambda}$ by adjoining \\ $d$ boxes 
to the right upper corner of $D_{\lambda}$} \\
  \hline
  1,2 & $S^{\lambda}$ & The irreducible representation of $S_{m}$ corresponding to $\lambda$\\
  \hline
  2 & $V^{\lambda}$ & The Weyl module corresponding to $\lambda$\\
  \hline
  2 & \makecell{$\text{Sub}_{\left(\text{x,y}\right)}$\\$\text{Sub}_{\left(\text{x,y}\right)}^{T}$} & \makecell{The substitution $x_{i}\protect\mapsto y_{i}$ \\ The substitution
according to a tableau $T$}\\
  \hline
  2 & $A_{k}$ & $\mathbb{F}\left\langle x_{1},...,x_{k}\right\rangle $, the free
algebra of rank $k$\\
  \hline
  2 & $\ensuremath{U^{\otimes_{s}n}}$ & The $n$-th symmetric power of $U$\\
  \hline
  2 & $\left[A_{k}\right]^{\left(m\right)}$ & The space of all homogeneous polynomials of degree $m$ in $A_{k}$\\
  \hline
  2 & $\left[U^{\otimes_{s}n}\right]^{\left(m\right)}$ & The space of all the polynomials of degree $m$ in  $U^{\otimes_{s}n}$\\
  \hline
  3 & $P_{n}\left(x_{1},...,x_{n}\right)$ & The symmetric polynomial $\protect\underset{\sigma\in S_{n}}{\sum}x_{\sigma\left(1\right)}\cdot\cdot\cdot x_{\sigma\left(n\right)}$\\
  \hline
3 & $\Omega_{n,m}$ (resp. $\Omega_{n,m}^{\lambda}$) & \makecell{The set of ordered partitions (resp. of type $\lambda$) \\ of $m$, divided into
$n$ parts}\\
  \hline
  4 & $P_{O}$ & The symmetric polynomial corresponding to $O\in\Omega_{n,m}$\\
  \hline
  4 & $T^{\left(s\right)}$ & \makecell{Given $\lambda\vdash m$  and a tableau  $T$  of shape $\lambda$, $T^{\left(s\right)}$
is obtained by \\ adjoining the boxes  $  {\scriptsize \   \begin{ytableau} \scriptstyle m+1 & ... & \scriptstyle m+s  \end{ytableau}  }  $
to the right upper corner of $T$}\\
  \hline
  4 & $P^{\left(s\right)}$ & \makecell{If $P=P_{n}\left(y_{1},...,y_{n}\right)$, then $P^{\left(s\right)}=P_{n+s}(y_{1},...,y_{n},\stackrel{s\text{ times}}{\protect\overbrace{y_{1},...,y_{1}}})$}\\
  \hline
  4 & $\text{C}\left(u\right)$ & \makecell{If $u$ is a monomial, then $\text{C}\left(u\right)$ is  the \\ largest
block of the form $y_{1}^{d}$ inside $u$}\\
  \hline
  4 & $u^{\left(s\right)}$ & \makecell{If $u$ is a monomial of the form $u=a\cdot\text{C}\left(u\right)\cdot b$,\\
then $u^{\left(s\right)}=a\cdot y_{1}^{s}\text{C}\left(u\right)\cdot b$}\\
  \hline
\end{tabular}
\end{adjustbox}
\caption{Summary of Notations}
\label{tab:label_test}
\end{table}

\section{Preliminaries and first results}

\nohyphens{In this section we review some parts of the classical representation
theory of $S_{m}$ and of $GL_{k}\left(\mathbb{F}\right)$.
Our purpose is twofold:  providing a necessary background from the
general theory, and presenting the connection between $W_{n,m}$
and the $GL_{k}\left(\mathbb{F}\right)$-representation $\mathbb{F}\left\langle x_{1},...,x_{k}\right\rangle \cap\left(x^{n}\right)^{T}$, established by Drensky and Benanti in \cite{benanti1999polynomial}.
This connection serves as a central tool in proving the main result of this paper (Theorem \ref{stab}). 
For more details regarding the general theory of $S_{m}$-representations, we refer the reader to \cite{henke235explicit}  or \cite{rowen2008graduate}, and to \cite{procesi2007lie} for the general theory of $GL_{k}\left(\mathbb{F}\right)$-representations.}

\subsection{The Representation Theory of \texorpdfstring{$S_{m}$}{Lg} }
\begin{definition}
\label{Partition}
A \textit{partition $\lambda$ }of $m$, denoted by $\lambda\vdash m$,
is a sequence of integers $\lambda=\left(\lambda_{1},\lambda_{2},...,\lambda_{r}\right)$
such that $\lambda_{1}\geq...\geq\lambda_{r}$ and $\lambda_{1}+\cdot\cdot\cdot+\lambda_{r}=m$. We call $r=\ell\left(\lambda\right)$ the \textbf{length} of the partition
$\lambda$. We also write
$\lambda=\left(1^{a_{1}},2^{a_{2}},...\right)$ to indicate the number
of times each integer occurs in the partition. For example, $\left(5,2,2,1,1,1\right)$
and $\left(1^{3},2^{2},5^{1}\right)$ represent the same partition.
\end{definition}

\begin{definition}
The \textbf{Young diagram} associated to $\lambda=\left(\lambda_{1},\lambda_{2},...,\lambda_{r}\right)\vdash m$
is the set \linebreak  $D_{\lambda}=\left\{ \left(i,j\right)\in\mathbb{Z}^{2}\mid i=1,...,r,j=1,...,\lambda_{i}\right\}$.
\end{definition} 

\nohyphens{It is convenient to display Young diagrams graphically, as the following
figure illustrates:}

\begin{figure}[h]
    \centering
    \includegraphics[width=0.25\textwidth]{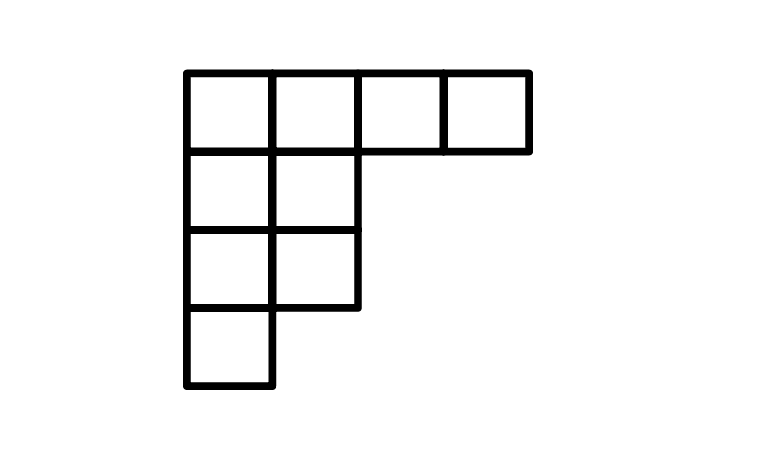}
    \caption{The Young diagram of $\lambda=\left(4,2,2,1\right)=\left(1,2^{2},4\right)$.}
    \label{fig:mesh1}
\end{figure}

\begin{definition}
\label{hookk}
Let $\lambda\vdash m$. The \textbf{hook} of the box $\left(i,j\right)\in D_{\lambda}$, is the set of boxes \linebreak $H_{i,j}^{\lambda}=\left\{ \left(i,j'\right)\in D_{\lambda}:j'\geq j\right\} \cup\left\{ \left(i',j\right)\in D_{\lambda}:i'\geq i\right\} $.
The \textbf{hook length} of $\left(i,j\right)$ is $h_{i,j}^{\lambda}=\left|H_{i,j}^{\lambda}\right|$.
\end{definition}
\begin{example}
\label{ex_will}
Taking $\lambda=\left(4,3,1\right)$, and writing the hook number $h_{i,j}^{\lambda}$ inside the (i, j)-th box, we obtain the following configuration: \begin{center}
$  {\scriptsize \ \begin{ytableau} 6 & 4 & 3  & 1\\  4& 2  & 1\\1    \end{ytableau}}  $
.
\par\end{center}
\end{example}
It is well knowm that all the irreducible representations of $S_{m}$
are in one-to-one correspondence with partitions of $m$. Throughout, we
denote these representations by $\left\{ S^{\lambda}\right\} _{\lambda\vdash m}$.
The following is a well-known formula for $\text{dim}\left(S^{\lambda}\right)$.

\begin{theorem}[\textbf{The hook length formula.} (See, e.g., {\cite[p.226]{rowen2008graduate}}) ]
\label{hooh}
Let $\lambda\vdash m$. The dimension of the irreducible
representation $S^{\lambda}$ is given by the formula:
\begin{gather*}
\text{dim}\left(S^{\lambda}\right)=\frac{m!}{\underset{\left(i,j\right)\in D_{\lambda}}{\prod}h_{i,j}^{\lambda}}.
\end{gather*}
\end{theorem}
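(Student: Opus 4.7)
The plan is to prove the hook length formula in two stages: first reduce the dimension of $S^{\lambda}$ to the number $f^{\lambda}$ of standard Young tableaux of shape $\lambda$, and then establish the closed-form formula $f^{\lambda}=m!/\prod h_{i,j}^{\lambda}$ by an induction driven by the branching rule.

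For the first stage, I would realize $S^{\lambda}$ as a Specht module. Fix a tableau $T_{0}$ of shape $\lambda$ and form the Young symmetrizer $c_{\lambda}=a_{\lambda}b_{\lambda}$, where $a_{\lambda}=\sum_{\sigma\in R(T_{0})}\sigma$ is the row-symmetrizer and $b_{\lambda}=\sum_{\tau\in C(T_{0})}\mathrm{sgn}(\tau)\tau$ is the column-antisymmetrizer. One shows that $S^{\lambda}\cong\mathbb{F}[S_{m}]\cdot c_{\lambda}$ is irreducible and has a basis given by the polytabloids $e_{T}=b_{T}\{T\}$ as $T$ ranges over the standard Young tableaux of shape $\lambda$. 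Linear independence of the $e_{T}$ comes from a dominance argument on tabloids, while spanning follows from the Garnir relations, which allow one to rewrite any $e_{T}$ with a column violation as a signed sum of $e_{T'}$ in which the violation has been "straightened" and which strictly decrease in dominance order. This gives $\dim(S^{\lambda})=f^{\lambda}$.

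For the second stage, I would use the branching rule $f^{\lambda}=\sum_{c}f^{\lambda\setminus c}$, summed over removable corners $c$ of $D_{\lambda}$; this is immediate from the observation that in any standard filling, the largest entry must sit in a corner. Setting $F^{\lambda}:=m!/\prod_{(i,j)\in D_{\lambda}}h_{i,j}^{\lambda}$, it suffices to verify the base case $F^{(1)}=1$ and the same recursion $F^{\lambda}=\sum_{c}F^{\lambda\setminus c}$. My preferred route here is the probabilistic proof of Greene-Nijenhuis-Wilf: run a hook walk by first choosing a uniformly random cell of $D_{\lambda}$ and then, at each step, jumping uniformly to another cell of the current cell's hook, halting upon reaching a corner. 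The target identity to prove is that the probability the walk terminates at the corner $c=(a,b)$ equals $F^{\lambda\setminus c}/F^{\lambda}$; summing over $c$ yields $1=\sum_{c}F^{\lambda\setminus c}/F^{\lambda}$, which is the required recursion.

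The technical heart, and the only real obstacle, is the computation of this termination probability. Concretely, one conditions on the projections $a_{1}<a_{2}<\cdots<a_{s}=a$ and $b_{1}<b_{2}<\cdots<b_{t}=b$ of the rows and columns visited, and must show that each such "hook path" is traversed with probability $\frac{1}{m}\prod_{i=1}^{s-1}\frac{1}{h_{a_{i},b}^{\lambda}-1}\prod_{j=1}^{t-1}\frac{1}{h_{a,b_{j}}^{\lambda}-1}$. Summing over all admissible row-column sets then requires the telescoping identity $\prod_{i}\bigl(1+\frac{1}{h_{i,b}^{\lambda}-1}\bigr)=\prod_{i}\frac{h_{i,b}^{\lambda}}{h_{i,b}^{\lambda}-1}$, together with its column analogue, and a careful bookkeeping of which hook lengths change when the corner $c$ is removed from $D_{\lambda}$ (namely, $h_{i,b}^{\lambda}$ and $h_{a,j}^{\lambda}$ each decrease by $1$, while the rest are unchanged). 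This last bookkeeping is what makes the quotient $F^{\lambda\setminus c}/F^{\lambda}$ collapse to precisely the termination probability, closing the induction.
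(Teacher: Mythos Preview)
The paper does not actually prove this theorem: it is stated as background, with a citation to Rowen's textbook, and no argument is given. So there is no ``paper's own proof'' to compare against.

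Your proposal is a correct and standard proof of the hook length formula. The reduction $\dim(S^{\lambda})=f^{\lambda}$ via the Specht module basis of standard polytabloids is the classical argument, and the Greene--Nijenhuis--Wilf hook-walk computation is a well-known and rigorous way to verify that $F^{\lambda}=m!/\prod h_{i,j}^{\lambda}$ satisfies the same branching recursion as $f^{\lambda}$. The bookkeeping you outline (only the hooks in row $a$ and column $b$ change when the corner $(a,b)$ is removed, each by one) is exactly what makes the telescoping work. There is no gap here; you have simply supplied a proof where the paper chose to quote the literature.
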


\begin{definition}
\label{elemen}
A \textbf{Young tableau} $T_{\lambda}$  of \textbf{shape} $\lambda\vdash m$ is a Young diagram $D_{\lambda}$ filled
with the integers $1,...,m$ without repetitions. A Young tableau $T_{\lambda}$
is called \textbf{standard}, if the numbers in $T_{\lambda}$ increase
along the rows and down the columns. We denote by $\text{Tab}\left(\lambda\right)$
the set of all standard Young tableaux.

Let $T$ be a Young tableau of shape $\lambda$.
The \textbf{row subgroup} of $T$ is the subgroup $R_{T}\leq S_{m}$,
where $\sigma\in R_{T}$ if and only if $i$ and $\sigma\left(i\right)$
lie in the same row of $T$ for any $i\in\left[m\right]$. Similarly,
one defines the \textbf{column subgroup} $C_{T}$. We define the following elements in $\mathbb{F}\left[S_{m}\right]$:
\begin{flalign*}
a_{T}:= & \underset{\sigma\in R_{T}}{\sum}\sigma\\
b_{T}:= & \underset{\tau\in C_{T}}{\sum}\text{sgn}\left(\tau\right)\tau\\
e_{T}:= & a_{T}b_{T}=\underset{\sigma\in R_{T},\tau\in C_{T}}{\sum}\text{sgn}\left(\tau\right)\sigma\tau.
\end{flalign*}
\end{definition}

The following theorem summarizes some of the fundamental properties of $S_{m}$-representations.

\begin{theorem}[see {\cite{procesi2007lie}}]
\label{basic_sn}
Let $R=\mathbb{F}\left[S_{m}\right]$. 
For every $\lambda\vdash m$ and $T\in\text{Tab}\left(\lambda\right)$,
the following hold:

(a) $Re_{T}\cong S^{\lambda}$, where $S^{\lambda}$ is the irreducible
$S_{m}$-representation corresponding to $\lambda$.

(b) There is $\beta_{\lambda}\in\mathbb{N}$ such that $e_{T}^{2}=\beta_{\lambda}e_{T}$.

(c) For every $\sigma\in S_{m}$, $\sigma e_{T}\sigma^{-1}=e_{\sigma T}$,
where $\sigma T$ is the tableau obtained by applying $\sigma$ on
each entry of $T$. 

(d) $e_{T}Re_{T}=\mathbb{F}e_{T}$. In particular, $\text{dim}\left(e_{T}Re_{T}\right)=1$.
If $U,V$ are tableaux of different shapes $\lambda,\mu$, then $e_{U}Re_{V}=0$.

\textbf{(2) }$R=\underset{\lambda\vdash m}{\bigoplus}\left(\underset{T_{\lambda}\in\text{Tab}\left(\lambda\right)}{\bigoplus}Re_{T_{\lambda}}\right)$. 

\textbf{(3)} Letting $d^{\lambda}=\text{dim}\left(S^{\lambda}\right)$, we have the following isomorphism of $S_{m}$-representations: 
\begin{gather*}
 R\cong\underset{\lambda\vdash m}{\bigoplus}d^{\lambda}S^{\lambda}.  
\end{gather*}
\end{theorem}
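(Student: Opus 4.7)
The plan is to follow the classical Young symmetrizer approach; all seven statements flow from one combinatorial lemma of von Neumann type. I would begin with (c), which is essentially a direct calculation: conjugation by $\sigma$ sends the row group $R_T$ onto $R_{\sigma T}$ and the column group $C_T$ onto $C_{\sigma T}$, so $\sigma a_T \sigma^{-1} = a_{\sigma T}$ and $\sigma b_T \sigma^{-1} = b_{\sigma T}$, and multiplying gives (c). The heart of the argument is then the following lemma: for tableaux $T$ and $U$ of shapes $\lambda$ and $\mu$, either (i) some pair $i \neq j$ lies in the same row of $T$ and the same column of $U$, or (ii) $\lambda = \mu$ and $U = rcT$ for some $r \in R_T$, $c \in C_T$. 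I would prove this by inducting row by row on $T$: the entries of each row of $T$ lie in distinct columns of $U$ (otherwise (i) kicks in), which allows one to align them with a row of $U$ via a column permutation, and iterating exhausts all entries.

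Parts (d), (b), (a) now fall out. For $U, V$ of different shapes, use (c) to rewrite $a_U \sigma b_V = \sigma\, a_{\sigma^{-1}U}\, b_V$; since $\sigma^{-1} U$ has the same shape as $U$, different from that of $V$, case (i) of the lemma applies to $(\sigma^{-1}U, V)$, and a suitable transposition $\tau$ with $a_{\sigma^{-1}U} \tau = a_{\sigma^{-1}U}$ and $\tau b_V = -b_V$ forces $a_{\sigma^{-1}U} b_V = 0$, hence $e_U R e_V = 0$. For $U = V = T$ the lemma gives $a_T \sigma b_T = 0$ unless $\sigma = rc \in R_T C_T$, in which case $a_T \sigma b_T = \text{sgn}(c)\, e_T$; therefore $e_T x e_T = a_T (b_T x a_T) b_T \in \mathbb{F} e_T$ for every $x \in \mathbb{F}[S_m]$, proving $e_T R e_T = \mathbb{F} e_T$. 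The case $x = e$ yields $e_T^2 = \beta_\lambda e_T$ (part (b)). To see $\beta_\lambda \neq 0$, I would compute the trace of left multiplication by $e_T$ on $R$ in two ways: it equals $m!$ (the coefficient of the identity in $e_T$ is $1$ since $R_T \cap C_T = \{e\}$), whereas $\beta_\lambda = 0$ would make $L_{e_T}$ nilpotent and contradict the trace; so $\beta_\lambda = m!/\dim(Re_T) \in \mathbb{N}$. For (a), the nonzero idempotent $e_T/\beta_\lambda$ yields $\text{End}_R(Re_T) \cong (e_T R e_T)^{\text{op}} = \mathbb{F}$, forcing $Re_T$ to be irreducible by the converse of Schur's lemma. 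Part (c) then identifies $Re_T$ up to isomorphism in terms of the shape $\lambda$ alone, while (d) shows that different shapes give non-isomorphic modules; since the number of partitions of $m$ equals the number of irreducible $S_m$-modules, the $Re_T$ exhaust them, and we define $S^\lambda := Re_T$ for any tableau $T$ of shape $\lambda$.

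The decomposition (2) is where I expect the main obstacle. The sum $\sum_{\lambda}\sum_{T_\lambda \in \text{Tab}(\lambda)} R e_{T_\lambda}$, indexed by \emph{standard} tableaux, must be shown to be direct, which requires strictly more than the lemma already used. My plan is to establish semi-orthogonality relations among the $e_{T_\lambda}$'s by a Garnir-style straightening argument inside $\mathbb{F}[S_m]$; the restriction to standard tableaux is essential here, since without it the idempotents are linearly dependent. Once directness is in place, (3) is immediate: each $Re_{T_\lambda} \cong S^\lambda$ by (a), so $R \cong \bigoplus_\lambda \#\text{Tab}(\lambda) \cdot S^\lambda$, and the classical identity $\#\text{Tab}(\lambda) = d^\lambda$ (via Theorem \ref{hooh} combined with the matching count of standard tableaux) delivers $R \cong \bigoplus_\lambda d^\lambda S^\lambda$.
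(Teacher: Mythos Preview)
The paper does not actually prove this theorem; it is stated with a bare citation to Procesi and used as a black box. Your outline is essentially the classical Young--symmetrizer development found in Procesi (and in Fulton--Harris), so in spirit you are reproducing the source the paper cites rather than diverging from it.

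That said, there is a genuine imprecision in your combinatorial lemma. As you have written it, the dichotomy ``either (i) some pair $i\neq j$ lies in the same row of $T$ and the same column of $U$, or (ii) $\lambda=\mu$ and $U=rcT$'' is false without a dominance hypothesis: take $T$ of shape $(1,1)$ and $U$ of shape $(2)$; every row of $T$ is a singleton, so (i) fails, yet the shapes differ so (ii) fails too. The correct statement is that the absence of such a pair forces $\lambda\unrhd\mu$; in particular, for $\lambda\neq\mu$ case (i) is guaranteed only when $\lambda\not\unrhd\mu$. Consequently your argument for $e_U R e_V=0$ only handles half the cases. The fix is standard: when $\lambda\unrhd\mu$ (so $\mu\not\unrhd\lambda$), run the symmetric argument on the middle factor $b_U\sigma a_V=\sigma\,b_{\sigma^{-1}U}a_V$ instead, producing a transposition in $C_{\sigma^{-1}U}\cap R_V$ and again forcing vanishing. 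With this patch the rest of (d), (b), (a) goes through as you describe.

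For part (2) your plan is correct but you should be aware that ``Garnir straightening'' is more than is needed: the usual route is to totally order the standard tableaux of each shape (say by last-letter order), prove the one-sided orthogonality $e_{T_i}e_{T_j}=0$ for $i<j$ via the same combinatorial lemma, and conclude directness from this triangular system. You also need to say why the direct sum exhausts $R$; the cleanest way is the dimension count $\sum_\lambda |\mathrm{Tab}(\lambda)|\cdot\dim S^\lambda=\sum_\lambda (d^\lambda)^2=m!$, which you implicitly invoke anyway in passing to (3).
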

\begin{remark}
\label{poll}
One can observe that $\text{dim}\left(e_{T}Re_{T'}\right)=1$ holds for any $T,T'\in\text{Tab}\left(\lambda\right)$. Indeed, taking
$\sigma\in S_{m}$ such that $T'=\sigma T$, we have by (c) and (d)
\begin{gather*}
e_{T}Re_{T'}=e_{T}Re_{\sigma T}=e_{T}R\sigma e_{T}\sigma^{-1}=\mathbb{F}e_{T}\sigma^{-1}.
\end{gather*}
\end{remark}
Recall from Section 1 that  $V_{m}\cap I$
is an $S_{m}$-representation for every $T$-ideal $I$. One way to calculate its decomposition
into $S_{m}$-irreducible representations is as follows.

\begin{lemma}
\label{mult_is_dim}
Let $I\triangleleft\mathbb{F}\left\langle X\right\rangle $
be a $T$-ideal, and let $\underset{\lambda\vdash m}{\bigoplus}m^{\lambda}S^{\lambda}$
be the $S_{m}$-decomposition of \linebreak $W=V_{m}\cap I$.
For every $\lambda\vdash m$, we have $m^{\lambda}=\text{dim}\left(e_{T_{\lambda}}W\right)$,
where $T_{\lambda}$ is any Young tableau of shape $\lambda$. In
particular, $m^{\lambda}=l$ if and only if there are $w_{1},...,w_{l}\in W$
and a Young tableau $T_{\lambda}$, such that $\left\{ e_{T_{\lambda}}w_{1},...,e_{T_{\lambda}}w_{l}\right\} $
is a basis of $e_{T_{\lambda}}W$.
\end{lemma}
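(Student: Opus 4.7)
The plan is to exploit the semisimplicity of $\mathbb{F}[S_m]$ together with the computations of the form $e_T R e_{T'}$ recalled in Theorem \ref{basic_sn}(d) and Remark \ref{poll}. Since $W$ is a left $\mathbb{F}[S_m]$-submodule, the decomposition $W\cong\bigoplus_{\mu\vdash m}m^{\mu}S^{\mu}$ is an honest isomorphism of $S_m$-modules, so applying the linear operator ``multiply on the left by $e_{T_\lambda}$'' gives
\begin{gather*}
e_{T_\lambda}W\cong\bigoplus_{\mu\vdash m}m^{\mu}\bigl(e_{T_\lambda}S^{\mu}\bigr).
\end{gather*}
Therefore the whole statement is reduced to computing $\dim(e_{T_\lambda}S^{\mu})$.

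The key observation is that $\dim(e_{T_\lambda}S^{\mu})=\delta_{\lambda,\mu}$. To see this, realise $S^{\mu}$ concretely as $Re_{T_\mu}$ via Theorem \ref{basic_sn}(a), where $T_\mu$ is any fixed tableau of shape $\mu$. Then
\begin{gather*}
e_{T_\lambda}S^{\mu}\cong e_{T_\lambda}Re_{T_\mu}.
\end{gather*}
If $\lambda\neq\mu$ this is zero by Theorem \ref{basic_sn}(d); if $\lambda=\mu$ it has dimension one by Remark \ref{poll} (which extends Theorem \ref{basic_sn}(d) to any pair of tableaux of the same shape, via conjugation as in Theorem \ref{basic_sn}(c)). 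Plugging this back into the displayed decomposition yields $\dim(e_{T_\lambda}W)=m^{\lambda}$, which is the main claim, and in particular shows that the answer does not depend on the choice of $T_\lambda$.

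For the ``in particular'' part, we simply interpret the equality $\dim(e_{T_\lambda}W)=m^{\lambda}=l$. If $l=m^{\lambda}$, pick any basis $v_1,\ldots,v_l$ of $e_{T_\lambda}W$; each $v_j$ can be written as $e_{T_\lambda}w_j$ for some $w_j\in W$, giving the desired collection. Conversely, given $w_1,\ldots,w_l\in W$ such that $\{e_{T_\lambda}w_j\}_{j=1}^{l}$ is a basis of $e_{T_\lambda}W$, we read off $\dim(e_{T_\lambda}W)=l$, and hence $m^{\lambda}=l$ by the first part.

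I don't expect serious obstacles here; the only subtle point is keeping the two tableaux $T_\lambda$ (chosen in the statement) and $T_\mu$ (arising from the realization $S^{\mu}\cong Re_{T_\mu}$) apart, and invoking Remark \ref{poll} rather than Theorem \ref{basic_sn}(d) when they have the same shape but are possibly distinct, so that the one-dimensionality does not require $T_\lambda=T_\mu$.
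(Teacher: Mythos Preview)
Your proof is correct and follows essentially the same approach as the paper: both compute $\dim(e_{T_\lambda}S^{\mu})$ by realizing $S^{\mu}\cong Re_{T_\mu}$ and then invoking Theorem~\ref{basic_sn}(d) together with Remark~\ref{poll} to obtain $\delta_{\lambda,\mu}$, after which the result follows from the decomposition of $W$. The only addition is that you spell out the ``in particular'' clause, which the paper leaves implicit.
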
 

\begin{proof}
Let $R=\mathbb{F}\left[S_{m}\right]$ and fix a partition $\lambda$ and $T_{\lambda}\in\text{Tab}\left(\lambda\right)$.
Given $\mu\vdash m$ and an irreducible representation $U\subseteq\mathbb{F}\left[S_{m}\right]$
isomorphic to $S^{\mu}$, we have $U\cong R_{T_{\mu}}$ for (any)
$T_{\mu}\in\text{Tab}\left(\mu\right)$, and thus by Theorem \ref{basic_sn} and
 Remark \ref{poll},
\begin{gather*}
\text{dim}\left(e_{T_{\lambda}}S^{\mu}\right)=\text{dim}\left(e_{T_{\lambda}}Re_{T_{\mu}}\right)=\begin{cases}
0 & \mu\neq\lambda\\
1 & \mu=\lambda
\end{cases}.
\end{gather*}

It follows that
\begin{gather*}
\text{dim}\left(e_{T_{\lambda}}W\right)=\text{dim}\left(e_{T_{\lambda}}\left(\underset{\mu\vdash m}{\bigoplus}m^{\mu}Re_{T_{\mu}}\right)\right)=m^{\lambda}\text{dim}\left(e_{T_{\lambda}}Re_{T_{\lambda}}\right)=m^{\lambda}.
\end{gather*}
\end{proof}

\subsubsection{Substitutions and linearization}

In Section 1, we have identified $V_{m}$ with $\mathbb{F}\left[S_{m}\right]$ as an $\mathbb{F}\left[S_{m}\right]$-bimodule. Accordingly,
we may write any $a\in\mathbb{F}\left[S_{m}\right]$ as a polynomial $a\left(x_{1},...,x_{m}\right)\in V_{m}$.
For example, if 
\begin{center}
$  T={\scriptsize \ \begin{ytableau} 1 & 2 \\ 3   \end{ytableau}} , $
\par\end{center}

then $R_{T}=\left\{ e,\left(12\right)\right\} $, $C_{T}=\left\{ e,\left(13\right)\right\} $
and 
\begin{flalign*}
e_{T} & =e_{T}\left(x_{1},x_{2},x_{3}\right)=\left(e+\left(12\right)\right)\left(e-\left(13\right)\right)x_{1}x_{2}x_{3}\\
 & =x_{1}x_{2}x_{3}-x_{3}x_{2}x_{1}+x_{2}x_{1}x_{3}-x_{3}x_{1}x_{2}.
\end{flalign*}

We also recall that under this identification, 
when a permutation $\sigma\in S_{m}$ acts on a monomial in $V_{m}$ from the left, it substitutes the variables according to $\sigma$,
whereas when $\sigma\in S_{m}$ acts from the right, it permutes the \textit{places} of the variables according to $\sigma$ (see Equation (\ref{left_&_right})).

In what follows, we shall present the concepts of substitution and
linearization in the spirit of Regev's paper \cite{regev1980polynomial}. This approach will
simplify some calculations in Section 4. 

\begin{definition}
\label{Regevsub}
Fix a new countable set of variables $Y=\left\{ y_{1},y_{2},...\right\}$. The \textbf{substitution homomorphism} is  defined by 
\begin{flalign*}
\text{Sub}_{\left(\text{x},\text{y}\right)} & :\mathbb{F}\left\langle X\right\rangle \rightarrow\mathbb{F}\left\langle Y\right\rangle \\
x_{i} & \mapsto y_{i}.
\end{flalign*}
For ease of notations, we write polynomials in $\mathbb{F}\left\langle X\right\rangle $
by $f\left(\text{x}\right)$ instead of $f\left(x_{i_{1}},x_{i_{2},...}\right)$,
and similarly for $\mathbb{F}\left\langle Y\right\rangle $. In this
notation, we have $\text{Sub}_{\left(\text{x},\text{y}\right)}\left(f\left(\text{x}\right)\right)=f\left(\text{y}\right)$. 
\end{definition}

\begin{definition}
\label{Regev_sub_T}
 Let $\lambda\vdash m$, and $T\in\text{Tab}\left(\lambda\right)$.
Suppose that $a_{i1},a_{i2},a_{i3},...,a_{ih_{i}}$ are the elements
appearing in the $i$-th row of $T$. The \textbf{substitution homomorphism induced by $T$}
 is defined by
\begin{gather*}
\text{Sub}_{\left(\text{x},\text{y}\right)}^{T}:\mathbb{F}\left\langle x_{1},...,x_{m}\right\rangle \rightarrow\mathbb{F}\left\langle Y\right\rangle \\
x_{a_{i1}},x_{a_{i2}},x_{a_{i3}},...\mapsto y_{i}.
\end{gather*}
\end{definition} 
For example, the tableau $  T={\scriptsize \ \begin{ytableau} 1 & 3 & 5 \\ 2 & 4   \end{ytableau}}  $
induces the substitutions $x_{1},x_{3},x_{5}\mapsto y_{1}$ and  $x_{2},x_{4}\mapsto y_{2}$,
and so
\begin{gather*}
\text{Sub}_{\left(\text{x},\text{y}\right)}^{T}\left(x_{1}x_{2}x_{3}x_{4}x_{5}+x_{2}x_{1}x_{3}x_{4}x_{5}\right)=y_{1}y_{2}y_{1}y_{2}y_{1}+y_{2}y_{1}^{2}y_{2}y_{1}.
\end{gather*}

\begin{definition}
Let $\underline{h}=\left(h_{1},...,h_{k}\right)$ be a $k$-tuple
of nonnegative integers, such that \linebreak $h_{1}+\cdot\cdot\cdot+h_{k}=m$.  We define the subgroup 
\begin{gather*}
S_{\underline{h}}:=S_{1,...,h_{1}}\times\cdot\cdot\cdot\times S_{m-h_{k}+1,...,m}\subseteq S_{m},
\end{gather*}

where $S_{a,...,b}=\text{Sym}\left\{ a,...,b\right\}$.

Let $g=g\left(y_{1},...,y_{k}\right)$ be a homogeneous polynomial of multidegree
$\underline{h}$, (that is,  each monomial in $g$ has degree $h_{i}$ in $y_{i}$). The \textbf{linearization} $\text{Lin}_{m}\left(g\right)\in V_{m}$
is defined as follows.

If $g$ is the monomial $g=y_{1}^{h_{1}}\cdot\cdot\cdot y_{k}^{h_{k}}$,
then 
\begin{gather*}
\text{Lin}_{m}\left(y_{1}^{h_{1}}\cdot\cdot\cdot y_{k}^{h_{k}}\right)=\left(\underset{\sigma\in S_{\underline{h}}}{\sum}\sigma\right)x_{1}\cdot\cdot\cdot x_{m}\in V_{m}.
\end{gather*}

In general, if $g$ is any polynomial of multidegree $\underline{h}$,
we write $g=y_{1}^{h_{1}}\cdot\cdot\cdot y_{k}^{h_{k}}\cdot a$ for
some $a\in\mathbb{F}\left[S_{m}\right]$ (one should notice that this is always possible), and define 
\begin{gather*}
\text{Lin}_{m}\left(y_{1}^{h_{1}}\cdot\cdot\cdot y_{k}^{h_{k}}\cdot a\right)=\text{Lin}_{m}\left(y_{1}^{h_{1}}\cdot\cdot\cdot y_{k}^{h_{k}}\right)a.
\end{gather*}
\end{definition}

For example, if $g=y_{1}y_{2}y_{2}y_{1}$, then $g=y_{1}^{2}y_{2}^{2}\cdot\left(24\right)$,
and 
\begin{flalign*}
\text{Lin}_{m}\left(g\right) & =\text{Lin}_{m}\left(y_{1}^{2}y_{2}^{2}\right)\cdot\left(24\right)\\
 & =\left(x_{1}x_{2}x_{3}x_{4}+x_{2}x_{1}x_{3}x_{4}+x_{1}x_{2}x_{4}x_{3}+x_{2}x_{1}x_{4}x_{3}\right)\cdot\left(24\right)\\
 & =x_{1}x_{4}x_{3}x_{2}+x_{2}x_{4}x_{3}x_{1}+x_{1}x_{3}x_{4}x_{2}+x_{2}x_{3}x_{4}x_{1}.
\end{flalign*}

\begin{remark}
Formally, the domain of $\text{Lin}_{m}$ is the subspace of the free
algebra $\mathbb{F}\left\langle y_{1},...,y_{m}\right\rangle $, consisting 
of all homogeneous polynomials of degree $m$. It is also clear
that $\text{Lin}_{m}$ is a linear map.

The reader should be aware that according to our notation, for a given tableau $T$, $b_{T}\left(\text{y}\right)$
stands for $\text{Sub}_{\left(\text{x},\text{y}\right)}\left(b_{T}\left(\text{x}\right)\right)$,
where  $b_{T}=\underset{\tau\in C_{T}}{\sum}\text{sgn}\left(\tau\right)\tau$.
\begin{proposition}
\label{inter}
Fix $\lambda\vdash m$ and $T_{\lambda}\in\text{Tab}\left(\lambda\right)$.  For every $f=f\left(x_{1},...,x_{m}\right)\in \mathbb{F}\left[S_{m}\right]$,
we have \begin{gather*}
b_{T_{\lambda}}\left(y\right)f=\underset{\tau\in C_{T_{\lambda}}}{\sum}\text{sgn}\left(\tau\right)f\left(y_{i_{\tau\left(1\right)}},...,y_{i_{\tau\left(m\right)}}\right),
\end{gather*} where $y_{i_{j}}=\text{Sub}_{\left(\text{x},\text{y}\right)}^{T_{\lambda}}\left(x_{j}\right)$, 
$j=1,...,m$. (As usual, the notation $f=f\left(x_{1},...,x_{m}\right)$
uses the identification $\mathbb{F}\left[S_{m}\right]=V_{m}$.)
\end{proposition}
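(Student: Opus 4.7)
The plan is to unravel the two sides of the claimed identity and verify equality by a direct computation, using three pieces of data established earlier: (i) the definition $b_{T_\lambda} = \sum_{\tau \in C_{T_\lambda}} \mathrm{sgn}(\tau)\,\tau$; (ii) the left-action formula $\sigma \cdot f(x_1, \ldots, x_m) = f(x_{\sigma(1)}, \ldots, x_{\sigma(m)})$ recorded in Section~1 under the identification $V_m \cong \mathbb{F}[S_m]$; and (iii) the fact that $\mathrm{Sub}^{T_\lambda}_{(x,y)}$ is a homomorphism sending each $x_j$ to $y_{i_j}$, where $i_j$ is the row of $T_\lambda$ containing $j$.

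By linearity of both the $S_m$-action on $V_m$ and of the substitution maps, it suffices to check the identity when $f = x_{\sigma(1)}\cdots x_{\sigma(m)}$ is a single permutation basis element of $V_m$. I would then expand
$$b_{T_\lambda}\cdot f \;=\; \sum_{\tau \in C_{T_\lambda}} \mathrm{sgn}(\tau)\,(\tau\cdot f) \;=\; \sum_{\tau \in C_{T_\lambda}} \mathrm{sgn}(\tau)\, f\!\left(x_{\tau(1)}, \ldots, x_{\tau(m)}\right),$$
the second equality being exactly (i)--(ii). Applying $\mathrm{Sub}^{T_\lambda}_{(x,y)}$ to both sides, each occurrence of $x_{\tau(j)}$ is turned into $y_{i_{\tau(j)}}$, so the right-hand side becomes $\sum_{\tau \in C_{T_\lambda}} \mathrm{sgn}(\tau)\, f(y_{i_{\tau(1)}}, \ldots, y_{i_{\tau(m)}})$, which is precisely the right-hand side of the proposition.

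The only subtlety is interpreting the notation ``$b_{T_\lambda}(y)\,f$'' on the left-hand side of the claim. In keeping with the convention from the remark preceding the proposition (under which $b_T(y) = \mathrm{Sub}_{(x,y)}(b_T(x))$), the natural reading is that $b_{T_\lambda}(y)\,f$ denotes the $y$-polynomial $\mathrm{Sub}^{T_\lambda}_{(x,y)}(b_{T_\lambda}\cdot f)$, obtained by first acting by $b_{T_\lambda} \in \mathbb{F}[S_m]$ on $f \in V_m$ from the left and then applying the tableau substitution to rewrite the result in the $y$ variables. Under this reading, the computation above yields the stated identity, so there is no genuine obstacle; the proposition is essentially a bookkeeping lemma that packages the interaction between the left $S_m$-action on $V_m$ and the substitution $\mathrm{Sub}^{T_\lambda}_{(x,y)}$ in a form convenient for the linearization arguments of Section~4.
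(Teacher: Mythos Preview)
Your computation is correct and follows essentially the same direct-verification strategy as the paper. The one point of divergence is how you parse the symbol $b_{T_\lambda}(y)\,f$. In the paper, $b_{T_\lambda}(y)$ is literally a $y$-polynomial (the paper's proof writes it as $\sum_{\tau\in C_{T_\lambda}}\mathrm{sgn}(\tau)\,y_{i_{\tau(1)}}\cdots y_{i_{\tau(m)}}$), and $f\in\mathbb{F}[S_m]$ acts on it from the \emph{right} by permuting places; the key observation is then that $y_{i_1}\cdots y_{i_m}\cdot f = f(y_{i_1},\ldots,y_{i_m})$. You instead read $b_{T_\lambda}(y)\,f$ as $\mathrm{Sub}^{T_\lambda}_{(x,y)}$ applied to the \emph{left} product $b_{T_\lambda}\cdot f$ in $V_m$. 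These two readings agree: if $f=\sum_\sigma\alpha_\sigma\sigma$, both unwind to $\sum_{\tau,\sigma}\mathrm{sgn}(\tau)\,\alpha_\sigma\,y_{i_{\tau\sigma(1)}}\cdots y_{i_{\tau\sigma(m)}}$, so your argument goes through. It is worth being aware, however, that the paper's right-action reading is the one used downstream in Proposition~\ref{Regev_sub} and Corollary~\ref{regev_col}, where $b_{T_\lambda}(y)\,g$ is treated as a $y$-polynomial on which $g$ acts by permuting positions.
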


\begin{proof}
First, note that
\begin{gather*}
b_{T_{\lambda}}\left(y\right)=\text{Sub}_{\left(\text{x},\text{y}\right)}^{T_{\lambda}}\left(\underset{\tau\in C_{T_{\lambda}}}{\sum}\text{sgn}\left(\tau\right)x_{\tau\left(1\right)}\cdot\cdot\cdot x_{\tau\left(m\right)}\right)=\underset{\tau\in C_{T_{\lambda}}}{\sum}\text{sgn}\left(\tau\right)y_{i_{\tau\left(1\right)}}\cdot\cdot\cdot y_{i_{\tau\left(m\right)}}.
\end{gather*}

Since any $\sigma\in S_{m}$ acts from the right by $y_{i_{1}}\cdot\cdot\cdot y_{i_{m}}\sigma=y_{i_{\sigma\left(1\right)}}\cdot\cdot\cdot y_{i_{\sigma\left(m\right)}}$, 
for every \linebreak $f=f\left(x_{1},...,x_{m}\right)=\underset{\sigma\in S_{m}}{\sum}\alpha_{\sigma}\sigma\in\mathbb{F}\left[S_{m}\right]$
we have
\begin{gather*}
y_{i_{1}}\cdot\cdot\cdot y_{i_{m}}f=\underset{\sigma\in S_{m}}{\sum}\alpha_{\sigma}y_{i_{\sigma\left(1\right)}}\cdot\cdot\cdot y_{i_{\sigma\left(m\right)}}=f\left(y_{i_{1}},...,y_{i_{m}}\right),
\end{gather*}

implying that
\begin{gather*}
b_{T_{\lambda}}\left(y\right)f=\underset{\tau\in C_{T_{\lambda}}}{\sum}\text{sgn}\left(\tau\right)y_{i_{\tau\left(1\right)}}\cdot\cdot\cdot y_{i_{\tau\left(m\right)}}f=\underset{\tau\in C_{T_{\lambda}}}{\sum}\text{sgn}\left(\tau\right)f\left(y_{i_{\tau\left(1\right)}},...,y_{i_{\tau\left(m\right)}}\right).
\end{gather*}
\end{proof}

\end{remark}
Let us record a remarkable feature of $\text{Lin}_{m}$,
which was first observed by Regev.

\begin{proposition}[L.2.4 and Theorem 2.8 in \cite{regev1982polynomial}]
\label{Regev_sub}
Given $\lambda=\left(\lambda_{1},\lambda_{2},...\right)\vdash m$ and $T_{\lambda}\in\text{Tab}\left(\lambda\right)$,
we let $T\in\text{Tab}\left(\lambda\right)$ denote the tableau 

\begin{figure}[H]
    \centering
    \includegraphics[width=0.2\textwidth]{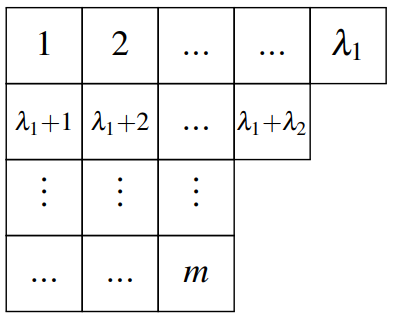},
\end{figure}

and consider the permutation $\sigma\in S_{m}$  satisfying $\sigma T_{\lambda}=T$.
For every $g\in\mathbb{F}\left[S_{m}\right]$, we have \linebreak
$\text{Lin}_{m}\left(b_{T_{\lambda}}\left(\text{y}\right)g\right)=\sigma e_{T_{\lambda}}\left(\text{x}\right)g$.

\end{proposition}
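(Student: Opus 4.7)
My plan is to factor the substitution $\text{Sub}^{T_\lambda}_{(x,y)}$ through the substitution $\text{Sub}^T_{(x,y)}$ induced by the row-by-row tableau $T$, and then to exploit the fact that $\text{Lin}_m$ interacts in a particularly clean way with $\text{Sub}^T_{(x,y)}$. Since $\sigma T_\lambda = T$, an index $j\in[m]$ lies in row $i$ of $T_\lambda$ if and only if $\sigma(j)$ lies in row $i$ of $T$; hence on any monomial $x_{i_1}\cdots x_{i_m}$ one has $\text{Sub}^{T_\lambda}_{(x,y)}=\text{Sub}^T_{(x,y)}\circ\sigma$, where $\sigma$ acts from the left on $V_m$. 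A direct check on monomials also shows that $\text{Sub}^T_{(x,y)}$ commutes with the right action of $\mathbb{F}[S_m]$ (both act independently on ``places''). Combining these two observations gives
\[
b_{T_\lambda}(y)\,g \;=\; \text{Sub}^{T_\lambda}_{(x,y)}\!\bigl(b_{T_\lambda}(x)\bigr)\,g \;=\; \text{Sub}^T_{(x,y)}\!\bigl(\sigma\, b_{T_\lambda}(x)\,g\bigr).
\]

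The heart of the argument is the following lemma, which I would prove next: for every $f\in V_m$ corresponding to $c\in\mathbb{F}[S_m]$ under $V_m\cong\mathbb{F}[S_m]$,
\[
\text{Lin}_m\!\bigl(\text{Sub}^T_{(x,y)}(f)\bigr)\;=\;a_T\cdot c.
\]
By linearity it suffices to treat $f=x_{\pi(1)}\cdots x_{\pi(m)}$, for which $c=\pi$. Write $\phi(j)=i$ when $j$ belongs to row $i$ of $T$. Then $\text{Sub}^T_{(x,y)}(f)=y_{\phi(\pi(1))}\cdots y_{\phi(\pi(m))}$, and since $T$ is the row-by-row tableau, the right action of $\pi$ on the reference monomial $y_1^{\lambda_1}\cdots y_k^{\lambda_k}$ produces exactly this expression, so $\text{Sub}^T_{(x,y)}(f)=y_1^{\lambda_1}\cdots y_k^{\lambda_k}\cdot\pi$. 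Applying the definition of $\text{Lin}_m$ and noting that for this $T$ the row subgroup $R_T$ coincides with the block subgroup $S_{\underline{\lambda}}$, one gets $\text{Lin}_m(y_1^{\lambda_1}\cdots y_k^{\lambda_k})=a_T\cdot x_1\cdots x_m$, and hence $\text{Lin}_m(\text{Sub}^T_{(x,y)}(f))=a_T\pi$. (The ambiguity in the factorization $y_1^{\lambda_1}\cdots y_k^{\lambda_k}\cdot a$ is harmless, since $a_T r=a_T$ for any $r\in R_T$.)

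Putting the two pieces together yields $\text{Lin}_m(b_{T_\lambda}(y)g)=a_T\,\sigma\, b_{T_\lambda}(x)\,g$ inside $\mathbb{F}[S_m]$. To finish, I would invoke the conjugation identity $\sigma a_{T_\lambda}\sigma^{-1}=a_{\sigma T_\lambda}=a_T$ (the analog of Theorem \ref{basic_sn}(c) for $a_T$, valid because $\sigma R_{T_\lambda}\sigma^{-1}=R_T$), which rewrites $a_T\sigma=\sigma a_{T_\lambda}$. Substituting, $\text{Lin}_m(b_{T_\lambda}(y)g)=\sigma\, a_{T_\lambda}b_{T_\lambda}(x)\,g=\sigma\,e_{T_\lambda}(x)\,g$, as claimed. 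The main obstacle to watch carefully is the bookkeeping in the key lemma: one must verify cleanly that the ``pick any factorization'' step in the definition of $\text{Lin}_m$ really gives a well-defined map and that the left/right actions and substitutions commute on the nose; once that is pinned down, the rest reduces to the conjugation identity for $a_{T_\lambda}$.
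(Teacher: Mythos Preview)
The paper does not supply its own proof of this proposition: it is simply quoted from Regev's paper \cite{regev1982polynomial} (Lemma~2.4 and Theorem~2.8 there) and used as a black box. So there is no ``paper's proof'' to compare against.

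That said, your argument is correct and self-contained. The three ingredients you isolate are exactly the right ones: (i) the factorisation $\text{Sub}^{T_\lambda}_{(x,y)}=\text{Sub}^{T}_{(x,y)}\circ\sigma$ coming from $\sigma T_\lambda=T$; (ii) the fact that $\text{Sub}^{T}_{(x,y)}$ commutes with the right $\mathbb{F}[S_m]$-action (both operate on positions); and (iii) the identity $\text{Lin}_m\bigl(\text{Sub}^{T}_{(x,y)}(f)\bigr)=a_T\cdot c$ for the row-by-row tableau $T$, together with $a_T\sigma=\sigma a_{T_\lambda}$. Your check of well-definedness of $\text{Lin}_m$ is also sound: if $y_1^{\lambda_1}\cdots y_k^{\lambda_k}\cdot a=y_1^{\lambda_1}\cdots y_k^{\lambda_k}\cdot a'$ then the coset-sums of the coefficients of $a-a'$ over each right $S_{\underline{\lambda}}$-coset vanish, and left multiplication by $\sum_{\sigma\in S_{\underline{\lambda}}}\sigma=a_T$ kills exactly such elements. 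This is essentially how Regev argues, so your sketch would serve perfectly well as an in-paper proof had the author chosen to include one.
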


Recall from Lemma \ref{mult_is_dim} that the multiplicity of $S^{\lambda}$ in $W=V_{m}\cap I$
(for a $T$-ideal $I\triangleleft\mathbb{F}\left\langle X\right\rangle $)
is the dimension of $e_{T_{\lambda}}W$, for any $T_{\lambda}\in\text{Tab}\left(\lambda\right)$. This dimension can be calculated more efficiently by the following corollary.

\begin{corollary}
\label{regev_col}
Fix $\lambda\vdash m$ and $T_{\lambda}\in\text{Tab}\left(\lambda\right)$,
and let $f_{1},...,f_{l}\in\mathbb{F}\left[S_{m}\right]$. The following are equivalent:

\textbf{(i)} The elements $e_{T_{\lambda}}f_{1},...,e_{T_{\lambda}}f_{l}\in\mathbb{F}\left\langle X\right\rangle$
are linearly independent.

\textbf{(ii)} The elements $b_{T_{\lambda}}\left(y\right)f_{1},...,b_{T_{\lambda}}\left(y\right)f_{l}\in\mathbb{F}\left\langle Y\right\rangle $
are linearly independent.

\textbf{(iii)} The elements $\underset{\tau\in C_{T_{\lambda}}}{\sum}\text{sgn}\left(\tau\right)f_{1}\left(y_{i_{\tau\left(1\right)}},...,y_{i_{\tau\left(m\right)}}\right),...,\underset{\tau\in C_{T_{\lambda}}}{\sum}\text{sgn}\left(\tau\right)f_{l}\left(y_{i_{\tau\left(1\right)}},...,y_{i_{\tau\left(m\right)}}\right)$
are linearly independent (where $y_{i_{j}}=\text{Sub}_{\left(\text{x},\text{y}\right)}^{T_{\lambda}}\left(x_{j}\right)$ as above). 

\end{corollary}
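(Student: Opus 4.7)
The plan is to treat the equivalences in two stages. The implication (ii)$\Leftrightarrow$(iii) is purely formal: Proposition \ref{inter} rewrites each $b_{T_{\lambda}}(y)f_j$ as exactly $\sum_{\tau \in C_{T_\lambda}} \text{sgn}(\tau) f_j(y_{i_{\tau(1)}}, \ldots, y_{i_{\tau(m)}})$, so the two families appearing in (ii) and (iii) are literally equal and their linear independence is the same assertion.

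All real content lies in (i)$\Leftrightarrow$(ii), which I would obtain by applying the linearization map $\text{Lin}_m$ in both directions, leveraging Proposition \ref{Regev_sub}, which asserts $\text{Lin}_m(b_{T_\lambda}(y)g) = \sigma e_{T_\lambda}(x)g$ for every $g \in \mathbb{F}[S_m]$. The direction (i)$\Rightarrow$(ii) follows by contrapositive almost immediately: if $\sum_j \alpha_j b_{T_\lambda}(y) f_j = 0$, applying $\text{Lin}_m$ gives $\sigma \sum_j \alpha_j e_{T_\lambda}(x) f_j = 0$ in $\mathbb{F}[S_m]$, and since left multiplication by $\sigma$ is a bijection, $\sum_j \alpha_j e_{T_\lambda}(x) f_j = 0$, negating (i).

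The reverse direction (ii)$\Rightarrow$(i), again by contrapositive, is reduced via Proposition \ref{Regev_sub} to the following implication: whenever $\sum_j \alpha_j e_{T_\lambda}(x) f_j = 0$, one gets $\text{Lin}_m(\sum_j \alpha_j b_{T_\lambda}(y) f_j) = 0$, and this should force $\sum_j \alpha_j b_{T_\lambda}(y) f_j = 0$. So it suffices to show that $\text{Lin}_m$ is injective on the subspace of $\mathbb{F}\langle Y\rangle$ of homogeneous polynomials of multidegree $\lambda$ in $y_1, \ldots, y_{\ell(\lambda)}$. I would do this by exhibiting a one-sided inverse up to a nonzero scalar. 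Fix the canonical tableau $T_0$ of shape $\lambda$ whose $i$-th row consists of the consecutive integers $\lambda_1+\cdots+\lambda_{i-1}+1, \ldots, \lambda_1+\cdots+\lambda_i$, so that $R_{T_0} = S_{\underline{\lambda}}$. A direct calculation on $y_1^{\lambda_1}\cdots y_r^{\lambda_r}$ shows $\text{Sub}_{(x,y)}^{T_0}(\text{Lin}_m(y_1^{\lambda_1}\cdots y_r^{\lambda_r})) = |S_{\underline{\lambda}}|\cdot y_1^{\lambda_1}\cdots y_r^{\lambda_r} = (\lambda_1!\cdots\lambda_r!)\cdot y_1^{\lambda_1}\cdots y_r^{\lambda_r}$, because every $\sigma \in S_{\underline{\lambda}}$ preserves the row labels of $T_0$ and therefore contributes the same monomial after substitution. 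This extends to a general $g = y_1^{\lambda_1}\cdots y_r^{\lambda_r}\cdot a$ (with $a \in \mathbb{F}[S_m]$) by checking that $\text{Sub}^{T_0}_{(x,y)}$ commutes with right multiplication by $S_m$ on places, yielding $\text{Sub}^{T_0}_{(x,y)}\circ \text{Lin}_m = (\lambda_1!\cdots\lambda_r!)\cdot\text{id}$ on all of multidegree-$\lambda$ polynomials. Since $\text{char}(\mathbb{F})=0$, this scalar is invertible and $\text{Lin}_m$ is injective.

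The main obstacle is the injectivity of $\text{Lin}_m$; everything else is essentially a transcription of Proposition \ref{Regev_sub}. This is also the only step where the characteristic-zero hypothesis enters, and the single delicate point is the bookkeeping verification that $\text{Sub}^{T_0}_{(x,y)}$ intertwines the right $S_m$-action, so that the identity checked on the monomial $y_1^{\lambda_1}\cdots y_r^{\lambda_r}$ propagates to arbitrary homogeneous polynomials of multidegree $\lambda$.
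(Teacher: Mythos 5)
Your proposal is correct, and its skeleton coincides with the paper's: (ii)$\Leftrightarrow$(iii) is read off from Proposition \ref{inter}, and (i)$\Leftrightarrow$(ii) is handled through $\text{Lin}_{m}$ and Proposition \ref{Regev_sub}. The genuine difference is in the direction (ii)$\Rightarrow$(i). The paper disposes of it with the remark that it is ``immediate from $\text{Lin}_{m}$ being a linear map'', but linearity alone only yields the other implication (a nontrivial relation among the $b_{T_{\lambda}}\left(\text{y}\right)f_{j}$ is carried by $\text{Lin}_{m}$ to one among the $\sigma e_{T_{\lambda}}f_{j}$); what (ii)$\Rightarrow$(i) actually requires is precisely what you supply, namely injectivity of $\text{Lin}_{m}$ on the span in question, and your argument for it is sound: with $T_{0}$ the row-reading tableau one has $R_{T_{0}}=S_{\underline{\lambda}}$, the substitution $\text{Sub}_{\left(\text{x},\text{y}\right)}^{T_{0}}$ intertwines the right place-permuting action and is unchanged under the left action of $R_{T_{0}}$, whence $\text{Sub}_{\left(\text{x},\text{y}\right)}^{T_{0}}\circ\text{Lin}_{m}=\left(\lambda_{1}!\cdots\lambda_{r}!\right)\cdot\text{id}$ on the multidegree-$\lambda$ component, a nonzero scalar in characteristic zero. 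So your write-up in fact repairs the one point where the paper is too terse. Two small additions would make it airtight: state explicitly that each $b_{T_{\lambda}}\left(\text{y}\right)f_{j}$ does lie in the multidegree-$\lambda$ component (immediate from Proposition \ref{inter}, since the variable substituted for $x_{j}$ is determined by the row of $j$ in $T_{\lambda}$ and row $i$ has $\lambda_{i}$ boxes), so your injectivity statement applies to the relevant span; and note, if you want a shortcut, that one can avoid discussing $\text{Lin}_{m}$ here altogether, since applying $\text{Sub}_{\left(\text{x},\text{y}\right)}^{T_{\lambda}}$ to $e_{T_{\lambda}}f=a_{T_{\lambda}}b_{T_{\lambda}}f$ gives $\left|R_{T_{\lambda}}\right|\cdot b_{T_{\lambda}}\left(\text{y}\right)f$ (because $\text{Sub}_{\left(\text{x},\text{y}\right)}^{T_{\lambda}}$ is invariant under the left action of $R_{T_{\lambda}}$), so any relation among the $e_{T_{\lambda}}f_{j}$ maps directly to one among the $b_{T_{\lambda}}\left(\text{y}\right)f_{j}$, which gives (ii)$\Rightarrow$(i) in one line.
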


\begin{proof} ((i) $\Rightarrow$ (ii))  Suppose $e_{T_{\lambda}}f_{1},...,e_{T_{\lambda}}f_{l}$
are linearly independent, and let $a_{1},...,a_{l}\in\mathbb{F}$
such that
\begin{align}
  a_{1}b_{T_{\lambda}}\left(\text{y}\right)f_{1}+\cdot\cdot\cdot+a_{l}b_{T_{\lambda}}\left(\text{y}\right)f_{l}=0. \label{independent}
\end{align}

Applying (the linear map) $\text{Lin}_{m}$ to both sides of (\ref{independent}),
we obtain by Proposition \ref{Regev_sub}
\begin{gather*}
\sigma\left(a_{1}e_{T_{\lambda}}\left(x\right)f_{1}+\cdot\cdot\cdot+a_{l}e_{T_{\lambda}}\left(x\right)f_{l}=0\right)
\end{gather*}

for some $\sigma\in S_{m}$, and thus $a_{1}=...=a_{l}=0$.

The converse ((i) $\Leftarrow$ (ii)) is immediate from $\text{Lin}_{m}$ being a linear
map. Finally, (ii) $\Leftrightarrow$ (iii) follows directly from Proposition \ref{inter}. 
\end{proof}

\subsection{The Representation Theory of  \texorpdfstring{$GL_{k}\left(\mathbb{F}\right)$}{Lg} }

Throughout this section, we fix $k$, and consider the free (non-unitary) algebra of rank
$k$, \linebreak $A_{k}:=\mathbb{F}\left\langle x_{1},...,x_{k}\right\rangle $.
This algebra can be graded as $A_{k}=\stackrel[m\geq1]{}{\bigoplus}\left[A_{k}\right]^{\left(m\right)}$,
where for every $m\geq1$, $\left[A_{k}\right]^{\left(m\right)}$ is the subspace
spanned by all the monomials of degree $m$. Letting 
\begin{align}
  V:=\text{span}_{\mathbb{F}}\left\{ x_{1},...,x_{k}\right\},
\label{v}
\end{align}
we may identify $\left[A_{k}\right]^{\left(m\right)}$ and $V^{\otimes m}$ as
vector spaces via $x_{i_{1}}\cdot\cdot\cdot x_{i_{m}}\leftrightarrow x_{i_{1}}\otimes\cdot\cdot\cdot\otimes x_{i_{m}}$.
Since  $GL_{k}\left(\mathbb{F}\right)\cong GL\left(V\right)$ acts
canonically on $V$ as linear transformations, $A_{k}$ becomes $a$ $GL_{k}\left(\mathbb{F}\right)$-representation by extending this action diagonally: 
\begin{gather*}
g.\left(x_{i_{1}}\cdot\cdot\cdot x_{i_{s}}\right)=g\left(x_{i_{1}}\right)\cdot\cdot\cdot g\left(x_{i_{s}}\right).
\end{gather*}

The $GL_{k}\left(\mathbb{F}\right)$-representations obtained this
way in $A_{k}$ are called \textbf{polynomial representations}. Given a polynomial
representation $W$, we write $\text{deg}\left(W\right)=m$ if $W\subset\left[A_{k}\right]^{\left(m\right)}$,
indicating that all monomials in $W$ have degree $m$.

There is also a natural action of $S_{m}$ on $V^{\otimes m}$ by
permuting the places of each monomial:
\begin{align}
  \sigma\left(x_{i_{1}}\cdot\cdot\cdot x_{i_{m}}\right)=x_{i_{\sigma^{-1}\left(1\right)}}\cdot\cdot\cdot x_{i_{\sigma^{-1}\left(m\right)}}. \label{prem_r}
\end{align}
Extending this action to all of $\mathbb{F}\left[S_{m}\right]$,
we can give the following definition.
\begin{definition}
Let $V$ be the $k$-dimensional vector space as in (\ref{v}). Given $\lambda\vdash m$
and a tableau $T_{\lambda}\in\text{Tab}\left(\lambda\right)$, we define
the \textbf{Weyl module} of degree $m$ associated to $\lambda$ to
be 
\begin{gather*}
V^{\lambda}:=e_{T_{\lambda}}V^{\otimes m}.
\end{gather*}
\end{definition}

A celebrated result of Schur and Weyl describes the irreducible polynomial
$GL_{k}\left(\mathbb{F}\right)$-representations as follows:

\begin{theorem}[see {\cite[p.252]{procesi2007lie} or \cite[p.366]{kanel2015computational} }]
\label{Schur} Let $V^{\lambda}$ be the Weyl $GL_{k}\left(\mathbb{F}\right)$-module as above. 

\textbf{(1)} $V^{\lambda}$ is an irreducible $GL_{k}\left(\mathbb{F}\right)$-representation,
and every irreducible polynomial representation of degree $m$ is
isomorphic to $V^{\mu}$, for some $\mu\vdash m$. 

\textbf{(2)} If $S,T$ are two tableaux of the same shape $\lambda$,
then $e_{S}V^{\otimes m}\cong e_{T}V^{\otimes m}$ as $GL_{k}\left(\mathbb{F}\right)$-representations.

\textbf{(3)} If $\ell\left(\lambda\right)>k$ (see definition \ref{Partition}),
then $V^{\lambda}=0$. Moreover, we have the following isomorphism
of $GL_{k}\left(\mathbb{F}\right)$-representations: 
\begin{gather*}
V^{\otimes m}\cong\underset{\underset{\ell\left(\lambda\right)\leq k}{\lambda\vdash m}}{\bigoplus}d^{\lambda}V^{\lambda},
\end{gather*}

where $d^{\lambda}$ is the dimension of (the irreducible $S_{m}$-representation)
$S^{\lambda}$.

\end{theorem}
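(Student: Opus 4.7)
My plan is to deduce all three parts from Schur--Weyl duality, which I would prove first. Let $\rho : \mathbb{F}[S_m] \to \text{End}(V^{\otimes m})$ denote the place-permutation action of (\ref{prem_r}), and let $\mathcal{U}(V) \subseteq \text{End}(V^{\otimes m})$ be the subalgebra generated by the diagonal action $g \mapsto g^{\otimes m}$ of $GL(V)$. The two actions manifestly commute. The key step is to establish the double centralizer relation
\begin{gather*}
\mathcal{U}(V) = \text{End}_{\mathbb{F}[S_m]}(V^{\otimes m}) \quad \text{and} \quad \rho(\mathbb{F}[S_m]) = \text{End}_{GL(V)}(V^{\otimes m}).
\end{gather*}
For the first equality I would identify $\text{End}_{\mathbb{F}[S_m]}(V^{\otimes m})$ with the $S_m$-invariants in $\text{End}(V)^{\otimes m}$, and then use the polarization identity (valid because $\text{char}(\mathbb{F}) = 0$) to show such symmetric tensors are linear combinations of $g^{\otimes m}$ for $g \in \text{End}(V)$. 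Zariski density of $GL(V)$ in $\text{End}(V)$ then lets one take $g \in GL(V)$, giving the first equality; the second follows by the double centralizer theorem applied to the semisimple algebra $\mathbb{F}[S_m]$.

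With the duality in hand, the semisimplicity of $\mathbb{F}[S_m]$ yields an $S_m \times GL(V)$-bimodule decomposition
\begin{gather*}
V^{\otimes m} \;\cong\; \underset{\lambda \vdash m}{\bigoplus}\; N^\lambda \otimes S^\lambda,
\end{gather*}
where $N^\lambda = \text{Hom}_{S_m}(S^\lambda, V^{\otimes m})$ inherits a $GL(V)$-action. The double centralizer theorem forces each nonzero $N^\lambda$ to be an irreducible $GL(V)$-representation, and distinct $\lambda$ give nonisomorphic $N^\lambda$. To match $N^\lambda$ with $V^\lambda = e_{T_\lambda} V^{\otimes m}$: since $e_{T_\lambda}$ is (up to the scalar $\beta_\lambda$ from Theorem~\ref{basic_sn}) a primitive idempotent whose image in $S^\mu$ is one-dimensional for $\mu = \lambda$ and zero otherwise, applying it to the bimodule decomposition gives $e_{T_\lambda} V^{\otimes m} \cong N^\lambda \otimes \mathbb{F} \cong N^\lambda$ as $GL(V)$-modules. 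This establishes part (1), once one also verifies by a weight-space argument (standard) that every irreducible polynomial $GL(V)$-representation of degree $m$ does appear.

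Part (2) follows almost formally: given $S, T \in \text{Tab}(\lambda)$, pick $\sigma \in S_m$ with $\sigma T = S$; by Theorem~\ref{basic_sn}(c), $e_S = \sigma e_T \sigma^{-1}$. Since the place-permutation action of $\sigma$ commutes with the diagonal $GL(V)$-action, left multiplication by $\sigma$ is a $GL(V)$-linear isomorphism $e_T V^{\otimes m} \to e_S V^{\otimes m}$. For part (3), suppose $\ell(\lambda) > k$. The first column of $T_\lambda$ then contains more than $k$ boxes; applying $b_{T_\lambda}$ antisymmetrizes over the corresponding tensor slots, each ranging over the $k$-dimensional space $V$. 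For any pure tensor two of the vectors chosen from that column must coincide, so the antisymmetrization kills it. Hence $b_{T_\lambda} V^{\otimes m} = 0$, and since $e_{T_\lambda} = a_{T_\lambda} b_{T_\lambda}$ we conclude $V^\lambda = 0$. The explicit formula for $V^{\otimes m}$ is then immediate from the bimodule decomposition restricted to $\ell(\lambda) \leq k$, together with $\dim S^\lambda = d^\lambda$ entering as the multiplicity of $V^\lambda$.

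The main obstacle is unambiguously the Schur--Weyl duality step itself: showing $\mathcal{U}(V) = \text{End}_{\mathbb{F}[S_m]}(V^{\otimes m})$. Once the polarization/Zariski-density argument is carried out, the double centralizer theorem and the bimodule yoga above deliver parts (1) and (3), while part (2) is essentially an exercise in transporting $e_{T_\lambda}$ by $S_m$-conjugation. Everything after the duality statement is routine idempotent manipulation on the isotypic components of $V^{\otimes m}$.
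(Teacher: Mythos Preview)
The paper does not prove this theorem at all: it is stated with references to Procesi and to Kanel-Belov--Karasik--Rowen, and then used as a black box. So there is no ``paper's own proof'' to compare against. Your outline is the standard Schur--Weyl argument and is essentially correct.

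One small simplification: you flag as a residual task verifying ``by a weight-space argument'' that every irreducible polynomial $GL(V)$-representation of degree $m$ appears in $V^{\otimes m}$. But look at how the paper defines polynomial representations (the paragraph before the theorem): they are by definition the $GL_k(\mathbb{F})$-representations occurring inside $A_k = \bigoplus_m V^{\otimes m}$. So an irreducible polynomial representation of degree $m$ is, by definition, a subrepresentation of $V^{\otimes m}$, and hence is already covered by your bimodule decomposition. No separate weight argument is needed here.

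A minor wording issue in part (3): you argue that $b_{T_\lambda} V^{\otimes m} = 0$ because ``for any pure tensor two of the vectors chosen from that column must coincide.'' Strictly this holds only for pure tensors of basis vectors $x_{i_1}\otimes\cdots\otimes x_{i_m}$, not arbitrary pure tensors; but since those span $V^{\otimes m}$ and $b_{T_\lambda}$ acts linearly, the conclusion is of course unaffected. Otherwise the sketch is sound.
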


\begin{remark}
One should observe that for any $T$-ideal $I$, $\left[A_{k}\right]^{\left(m\right)}\cap I$ is also a $GL_{k}\left(\mathbb{F}\right)$-representation.
This is because $I$ is invariant under substitutions. Thus, just as with the decomposition of $V_{m}\cap I$ into irreducible $S_{m}$-representations
(established in Section 1), we have a decomposition of $\left[A_{k}\right]^{\left(m\right)}\cap I$ into $GL_{k}\left(\mathbb{F}\right)$-representations:
\begin{gather*}
A_{k}^{\left(m\right)}\cap I\cong\underset{\underset{\ell\left(\lambda\right)\leq k}{\lambda\vdash m}}{\bigoplus}n^{\lambda}V^{\lambda}.
\end{gather*}
\end{remark}
In the early 1980s, Berele and Drensky established the following connection between the  $GL_{k}\left(\mathbb{F}\right)$-decomposition
of $\left[A_{k}\right]^{\left(m\right)}\cap I$ and the $S_{m}$-decomposition
of $V_{m}\cap I$:

\begin{theorem}[Berele \cite{berele1982homogeneous}, Drensky \cite{drensky1984codimensions}]
\label{Connection_Sn_Glk}
Let $I\triangleleft\mathbb{F}\left\langle X\right\rangle $ be a $T$-ideal,
and suppose that as an $S_{m}$-representation, $V_{m}\cap I$ decomposes as $\underset{\lambda\vdash m}{\bigoplus}q^{\lambda}S^{\lambda}$.
Then, as a $GL_{k}\left(\mathbb{F}\right)$-representation, $\left[A_{k}\right]^{\left(m\right)}\cap I$
decomposes as
\begin{gather*}
\underset{\underset{\ell\left(\lambda\right)\leq k}{\lambda\vdash m}}{\bigoplus}q^{\lambda}V^{\lambda}.
\end{gather*}
\end{theorem}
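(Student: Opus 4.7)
The approach is to invoke the Schur functor $U \mapsto U \otimes_{\mathbb{F}[S_{m}]} V^{\otimes m}$, where $V^{\otimes m}$ is regarded as a right $\mathbb{F}[S_{m}]$-module via the place-permutation action (\ref{prem_r}). Since $\mathbb{F}[S_{m}]$ is semisimple in characteristic zero, this functor is exact. A direct computation combined with Schur--Weyl duality (Theorem \ref{Schur}) yields $S^{\lambda} \otimes_{\mathbb{F}[S_{m}]} V^{\otimes m} \cong V^{\lambda}$, with the convention $V^{\lambda} = 0$ whenever $\ell(\lambda) > k$. Applied to the bimodule isomorphism $V_{m} \cong \mathbb{F}[S_{m}]$ established via (\ref{left_&_right}), we obtain $V_{m} \otimes_{\mathbb{F}[S_{m}]} V^{\otimes m} \cong V^{\otimes m} \cong [A_{k}]^{(m)}$. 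Hence, applying the functor to the inclusion $V_{m} \cap I \hookrightarrow V_{m}$ gives an embedding
\[ \iota : (V_{m} \cap I) \otimes_{\mathbb{F}[S_{m}]} V^{\otimes m} \;\hookrightarrow\; [A_{k}]^{(m)}. \]

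The next step is to identify the image of $\iota$ with $[A_{k}]^{(m)} \cap I$. On elementary tensors $\iota$ sends $f(x_{1},\ldots,x_{m}) \otimes (v_{1} \otimes \cdots \otimes v_{m})$ to $f(v_{1},\ldots,v_{m}) \in [A_{k}]^{(m)}$. The inclusion $\text{Im}(\iota) \subseteq [A_{k}]^{(m)} \cap I$ is immediate because $I$ is closed under substitution of variables. For the reverse inclusion I would invoke the standard multilinearization procedure: in characteristic zero, any homogeneous $g \in [A_{k}]^{(m)} \cap I$ admits a full polarization $\widetilde{g} \in V_{m} \cap I$ from which $g$ can be recovered (up to a nonzero combinatorial scalar) by identifying variables, thereby exhibiting $g$ as an element of $\text{Im}(\iota)$.

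Combining these facts with the $S_{m}$-decomposition $V_{m} \cap I \cong \bigoplus_{\lambda \vdash m} q^{\lambda} S^{\lambda}$ and the exactness of the Schur functor, we would conclude
\[ [A_{k}]^{(m)} \cap I \;\cong\; (V_{m} \cap I) \otimes_{\mathbb{F}[S_{m}]} V^{\otimes m} \;\cong\; \bigoplus_{\underset{\ell(\lambda) \leq k}{\lambda \vdash m}} q^{\lambda} V^{\lambda}, \]
which is the desired decomposition.

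The main obstacle I expect is the multilinearization step: one must verify both that the full polarization of an element of $I$ remains in $I$ (which uses the $T$-ideal property in an essential way, since polarization is realized by a finite iteration of partial linearizations, each a consequence of the $T$-ideal axioms) and that polarizing-then-specializing recovers the original polynomial up to an invertible combinatorial factor (which requires characteristic zero in order to clear the multinomial coefficients). Everything else is a formal consequence of Schur--Weyl duality and the semisimplicity of $\mathbb{F}[S_{m}]$.
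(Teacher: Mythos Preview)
The paper does not supply its own proof of this theorem; it is quoted as a known result of Berele and Drensky, with references to the original papers. So there is no in-paper argument to compare against.

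Your proposed argument is correct and is essentially the standard modern proof. The Schur functor $U \mapsto U \otimes_{\mathbb{F}[S_m]} V^{\otimes m}$ is exact by semisimplicity, and Schur--Weyl duality (in the form $V^{\otimes m} \cong \bigoplus_\lambda S^\lambda \otimes V^\lambda$ as $(\mathbb{F}[S_m],GL_k)$-bimodule, together with self-duality of $S^\lambda$) gives $S^\lambda \otimes_{\mathbb{F}[S_m]} V^{\otimes m} \cong V^\lambda$. Your identification of the image of $\iota$ with $[A_k]^{(m)} \cap I$ is the crux, and both inclusions are fine: the forward inclusion is immediate from closure under substitution, and the reverse is the polarization argument you sketch. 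One small point worth making explicit in that step: an arbitrary $g \in [A_k]^{(m)} \cap I$ need not be multihomogeneous, but since $I$ is $GL_k$-stable it is in particular stable under the diagonal torus, so $g$ decomposes into multihomogeneous components each lying in $I$, and your polarize-then-specialize argument applies to each component separately. The multinomial factor $h_1!\cdots h_k!$ arising from specialization is invertible precisely because $\mathrm{char}\,\mathbb{F}=0$, as you note.
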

In other words, the space $\left[A_{k}\right]^{\left(m\right)}\cap I$ "captures"
the multiplicities $\left\{ q^{\lambda}:\ell\left(\lambda\right)\leq k\right\} $
of $V_{m}\cap I$.

\subsection{Symmetric tensors} 

Given an $\mathbb{F}$-vector space, the \textit{$n$-th} \textit{symmetric
power} of $U$, denoted $U^{\otimes_{s}n}=\overset{n\text{ times}}{\overbrace{U\otimes_{s}\cdot\cdot\cdot\otimes_{s}U}}$,
is the vector space 
\begin{gather*}
U^{\otimes_{s}n}=U^{\otimes n}/\left\langle \sigma\cdot w-w:w\in U^{\otimes n},\sigma\in S_{n}\right\rangle _{\mathbb{F}}.
\end{gather*}

Elements in $U^{\otimes_{s}n}$ can be written as $u_{1}\otimes_{s}\cdot\cdot\cdot\otimes_{s}u_{n}$,
$u_{i}\in U$, where we identify $u_{1}\otimes_{s}\cdot\cdot\cdot\otimes_{s}u_{n}$
and $u_{\sigma\left(1\right)}\otimes_{s}\cdot\cdot\cdot\otimes_{s}u_{\sigma\left(n\right)}$
for all $\sigma\in S_{n}$. We denote the space of all the polynomials in  $U^{\otimes_{s}n}$
of degree $m$ (i.e., the polynomials whose pre-image in $U^{\otimes n}$ under the quotient map has degree $m$) by $\left[U^{\otimes_{s}n}\right]^{\left(m\right)}$. 

\begin{remark}
\label{remark_sym} 
Let $V=\text{span}_{\mathbb{F}}\left\{ x_{1},...,x_{k}\right\} $.
It is easy to see that $V^{\otimes_{s}n}$ is isomorphic (as a $GL_{k}$-representation)
to $V^{\left(n\right)}=e_{T}V^{\otimes n}$, where $T$ is the
tableau $  T=$ ${\tiny \   \begin{ytableau} 1 & 2 & ... & n  \end{ytableau}}$ ,
via the map 
\begin{gather*}
x_{i_{1}}\otimes_{s}\cdot\cdot\cdot\otimes_{s}x_{i_{n}}\mapsto\underset{\sigma\in S_{n}}{\sum}x_{i_{\sigma\left(1\right)}}\otimes\cdot\cdot\cdot\otimes x_{i_{\sigma\left(n\right)}}.
\end{gather*}
\end{remark}

The following lemma of Drensky and Benanti, gives a formula for the decomposition of $U^{\otimes_{s}n}$ into irreducible representations.

\begin{lemma}[{\cite[Proposition 1.2]{benanti1999polynomial}}]
\label{decomp_of_sym}
Suppose $U=\stackrel[r=1]{\infty}{\bigoplus}V_{r}$ is a direct sum
of irreducible polynomial $GL_{k}\left(\mathbb{F}\right)$-representations,
such that $\text{deg}\left(V_{1}\right)\leq\text{deg}\left(V_{2}\right)\leq...$,
and for each partition $\lambda$, only a finite number of $V_{r}$
are isomorphic to $V^{\lambda}$.

\textbf{(1)} We have the following isomorphism of $GL_{k}\left(\mathbb{F}\right)$-representations:

\begin{align}
  U^{\otimes_{s}n}\cong\bigoplus_{r_{1},...,r_{q}}\bigoplus_{a_{1},...,a_{q}}\left(V_{r_{1}}^{\otimes_{s}a_{1}}\otimes V_{r_{2}}^{\otimes_{s}a_{2}}\otimes\cdot\cdot\cdot\otimes V_{r_{q}}^{\otimes_{s}a_{q}}\right),
\label{deomp_dren}
\end{align}
where the summation runs over all $r_{1}<...<r_{q}$ and $a_{1},...,a_{q}\in\mathbb{N}$
($q\leq n$) such that \linebreak $a_{1}+...+a_{q}=n$.

\textbf{(2)} Hence, given $m\in\mathbb{N}$, $\left[U^{\otimes_{s}n}\right]^{\left(m\right)}$
is isomorphic to a finite direct sum $\underset{i}{\bigoplus}Q_{i}$, where
we take from (\ref{deomp_dren}) only the summands $Q_{i}=V_{r_{1}}^{\otimes_{s}a_{1}}\otimes V_{r_{2}}^{\otimes_{s}a_{2}}\otimes\cdot\cdot\cdot\otimes V_{r_{q}}^{\otimes_{s}a_{q}}$
with total degree $m$, i.e., those that satisfy $a_{1}\text{deg}\left(V_{r_{1}}\right)+...+a_{q}\text{deg}\left(V_{r_{q}}\right)=m$. 
\end{lemma}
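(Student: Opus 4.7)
The plan is to reduce the statement to the classical distributive identity
\[
(W_{1}\oplus\cdots\oplus W_{k})^{\otimes_{s}n} \;\cong\; \bigoplus_{a_{1}+\cdots+a_{k}=n} W_{1}^{\otimes_{s}a_{1}}\otimes \cdots \otimes W_{k}^{\otimes_{s}a_{k}}
\]
for a \emph{finite} direct sum of $GL_{k}(\mathbb{F})$-representations, where the tuple $(a_{1},\ldots,a_{k})$ ranges over $\mathbb{Z}_{\geq 0}^{k}$. I would prove this by induction on $k$, with the base case $k=2$ being classical (either by a direct basis computation or via the universal property of the symmetric power). Since the construction is functorial in each summand, the resulting isomorphism is automatically $GL_{k}(\mathbb{F})$-equivariant.

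For part (1), I would pass from the finite case to the infinite direct sum $U=\bigoplus_{r\geq 1}V_{r}$ by observing that $U^{\otimes n}=\bigcup_{N\geq 1}(V_{1}\oplus\cdots\oplus V_{N})^{\otimes n}$, and hence $U^{\otimes_{s}n}=\bigcup_{N\geq 1}(V_{1}\oplus\cdots\oplus V_{N})^{\otimes_{s}n}$. Applying the finite decomposition at each stage $N$ and checking compatibility with the inclusions $V_{1}\oplus\cdots\oplus V_{N}\hookrightarrow V_{1}\oplus\cdots\oplus V_{N+1}$ (under which the new summand contributes only the $a_{N+1}=0$ term), one obtains a decomposition of $U^{\otimes_{s}n}$ indexed by all finitely supported tuples $(a_{1},a_{2},\ldots)\in\mathbb{Z}_{\geq 0}^{\mathbb{N}}$ with $\sum_{r}a_{r}=n$. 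Retaining only those indices $r_{1}<\cdots<r_{q}$ with $a_{i}\geq 1$ produces exactly the indexing in the statement; the bound $q\leq n$ is automatic from $\sum a_{i}=n$ and $a_{i}\geq 1$.

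For part (2), each summand $Q=V_{r_{1}}^{\otimes_{s}a_{1}}\otimes\cdots\otimes V_{r_{q}}^{\otimes_{s}a_{q}}$ appearing in (\ref{deomp_dren}) is homogeneous of total degree $a_{1}\text{deg}(V_{r_{1}})+\cdots+a_{q}\text{deg}(V_{r_{q}})$, so $[U^{\otimes_{s}n}]^{(m)}$ is the direct sum of precisely those $Q$ satisfying $a_{1}\text{deg}(V_{r_{1}})+\cdots+a_{q}\text{deg}(V_{r_{q}})=m$. Finiteness of this sub-sum uses the standing hypothesis: since each $\text{deg}(V_{r})\geq 1$, only indices $r$ with $\text{deg}(V_{r})\leq m$ can contribute, and for each such value $d\leq m$ there are only finitely many admissible $V_{r}$ (there are finitely many partitions of $d$, and only finitely many $V_{r}$ isomorphic to each $V^{\lambda}$).

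I expect the main routine obstacle to be the bookkeeping in the infinite-union step, namely verifying that the finite decompositions are compatible with the inclusion maps and that their colimit carries a well-defined $GL_{k}(\mathbb{F})$-action matching the one on $U^{\otimes_{s}n}$. No genuinely hard step should arise beyond this careful verification.
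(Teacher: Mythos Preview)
The paper does not actually give its own proof of this lemma: it is quoted from \cite{benanti1999polynomial} (Proposition~1.2) and stated without proof, followed immediately by an example. So there is nothing to compare against on the paper's side.

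That said, your sketch is correct and is the standard argument. The distributive identity $(W_{1}\oplus\cdots\oplus W_{k})^{\otimes_{s}n}\cong\bigoplus_{a_{1}+\cdots+a_{k}=n}W_{1}^{\otimes_{s}a_{1}}\otimes\cdots\otimes W_{k}^{\otimes_{s}a_{k}}$ is exactly what drives the result, and your passage to the infinite direct sum via the directed union of the finite truncations is the natural way to handle it; compatibility is routine since the inclusion $V_{1}\oplus\cdots\oplus V_{N}\hookrightarrow V_{1}\oplus\cdots\oplus V_{N+1}$ simply embeds each summand as the $a_{N+1}=0$ piece. Your finiteness argument for part~(2) is also the intended one.
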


\begin{example}
The decomposition of $A_{2}=\mathbb{F}\left\langle x_{1},x_{2}\right\rangle $
is given by
\begin{gather*}
A_{2}=\bigoplus_{m=1}^{\infty}\left[A_{2}\right]^{\left(m\right)}\cong\bigoplus_{m=1}^{\infty}\underset{\underset{\ell\left(\lambda\right)\leq2}{\lambda\vdash m}}{\bigoplus}d^{\lambda}V^{\lambda}=V^{\left(1\right)}\oplus V^{\left(2\right)}\oplus V^{\left(1,1\right)}\oplus V^{\left(3\right)}\oplus2V^{\left(2,1\right)}\oplus V^{\left(4\right)}\cdot\cdot\cdot
\end{gather*}

which implies
\begin{flalign*}
A_{2}^{\otimes_{s}2} & \cong\left(V^{\left(1\right)}\right)^{\otimes_{s}2}\oplus\left(V^{\left(2\right)}\right)^{\otimes_{s}2}\oplus\cdot\cdot\cdot\oplus\left(\left(V^{\left(1\right)}\right)^{\otimes_{s}1}\otimes\left(V^{\left(2\right)}\right)^{\otimes_{s}1}\right)\oplus\cdot\cdot\cdot\\
\left[A_{2}^{\otimes_{s}2}\right]^{\left(3\right)} & \cong\left(\left(V^{\left(1\right)}\right)^{\otimes_{s}1}\otimes\left(V^{\left(2\right)}\right)^{\otimes_{s}1}\right)\oplus\left(\left(V^{\left(1\right)}\right)^{\otimes_{s}1}\otimes\left(V^{\left(1,1\right)}\right)^{\otimes_{s}1}\right).
\end{flalign*}
\end{example}

\begin{lemma}
\label{M_lemma}
Fix $K\in\mathbb{N}$. There are $GL_{k}\left(\mathbb{F}\right)$-representations
$M_{1},...,M_{p}\subseteq A_{k}$ of degree at most $2K$, and non-negative
integers $c_{1},...,c_{p}$, such that

\textbf{(1)} $\left[A_{k}^{\otimes_{s}K}\right]^{\left(2K\right)}\cong\stackrel[i=1]{p}{\bigoplus}V^{\left(c_{i}\right)}\otimes M_{i}$.

\textbf{(2)} For every $K<n$, $\left[A_{k}^{\otimes_{s}n}\right]^{\left(n+K\right)}\cong\stackrel[i=1]{p}{\bigoplus}V^{\left(n-K+c_{i}\right)}\otimes M_{i}$.

\end{lemma}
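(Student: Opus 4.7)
The plan is to apply Lemma \ref{decomp_of_sym} directly with $U = A_k$ and analyze which summands contribute to the graded piece $[A_k^{\otimes_s n}]^{(n+K)}$. Fix once and for all a decomposition $A_k = \bigoplus_r V_r$ into irreducible polynomial $GL_k(\mathbb{F})$-representations, arranged so that $V_1 := V = V^{(1)}$ is the unique degree-$1$ summand and $\deg V_r \geq 2$ for all $r \geq 2$. By Lemma \ref{decomp_of_sym}, the summands of $[A_k^{\otimes_s n}]^{(n+K)}$ are the tensor products $V_{r_1}^{\otimes_s a_1} \otimes \cdots \otimes V_{r_q}^{\otimes_s a_q}$ with $r_1 < \cdots < r_q$, all $a_i \geq 1$, $\sum a_i = n$, and $\sum a_i \deg V_{r_i} = n+K$. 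Subtracting gives the crucial ``excess'' identity $\sum a_i(\deg V_{r_i} - 1) = K$, to which only the factors with $r_i \geq 2$ contribute.

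The next step is to enumerate the non-$V^{(1)}$ portion. Let $\mathcal{D}$ be the set of tuples $((r_2,a_2),\ldots,(r_q,a_q))$ satisfying $2 \leq r_2 < \cdots < r_q$, each $a_i \geq 1$, and $\sum_{i \geq 2} a_i(\deg V_{r_i} - 1) = K$. Since $\deg V_{r_i} - 1 \geq 1$, this forces $\sum_{i \geq 2} a_i \leq K$ and $\deg V_{r_i} \leq K+1$ for each relevant $r_i$; together with the fact that only finitely many $V_r$ have each bounded degree, $\mathcal{D}$ is finite. Enumerate $\mathcal{D} = \{\underline{\tau}_1,\ldots,\underline{\tau}_p\}$ and for each $j$ set
\[
M_j := V_{r_2^{(j)}}^{\otimes_s a_2^{(j)}} \otimes \cdots \otimes V_{r_{q_j}^{(j)}}^{\otimes_s a_{q_j}^{(j)}}, \qquad b_j := \sum_{i \geq 2} a_i^{(j)}.
\]
Then $\deg M_j = K + b_j \leq 2K$, so each $M_j$ is a polynomial $GL_k(\mathbb{F})$-representation of degree at most $2K$, as required.

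For $n > K$, every summand must have a $V^{(1)}$ factor ($r_1 = 1$): otherwise all $\deg V_{r_i} \geq 2$ would give $n+K = \sum a_i \deg V_{r_i} \geq 2 \sum a_i = 2n$, forcing $n \leq K$. Thus every summand is uniquely parametrized by some $\underline{\tau}_j \in \mathcal{D}$ and a value $a_1 = n - b_j \geq 1$. Via Remark \ref{remark_sym}, $V^{\otimes_s a_1} \cong V^{(a_1)}$, yielding
\[
[A_k^{\otimes_s n}]^{(n+K)} \cong \bigoplus_{j=1}^p V^{(n - b_j)} \otimes M_j = \bigoplus_{j=1}^p V^{(n - K + c_j)} \otimes M_j, \quad \text{where } c_j := K - b_j,
\]
which proves (2). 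For (1), $n = K$ additionally allows the boundary case $a_1 = K - b_j = 0$ (when $b_j = K$), in which the corresponding summand is just $M_j$ with no $V_1$-factor; interpreting $V^{(0)} = V^{\otimes_s 0} = \mathbb{F}$ makes the formula uniform, so (1) follows as well.

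There is no deep obstacle in this argument — it is essentially a direct combinatorial reading of Lemma \ref{decomp_of_sym}. The key structural observation that makes the stability work is that $\mathcal{D}$ depends only on $K$, not on $n$, so the $V^{(1)}$-multiplicity $a_1 = n - b_j$ is the only ingredient that moves with $n$; this is precisely what produces the uniform leftmost Weyl-module shape $(n - K + c_j)$. The only bookkeeping subtlety worth flagging is the boundary case $b_j = K$ at $n = K$, which requires the convention $V^{(0)} = \mathbb{F}$.
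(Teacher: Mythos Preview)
Your argument is correct and follows essentially the same route as the paper: both apply Lemma~\ref{decomp_of_sym} to $U=A_k$, isolate the unique degree-$1$ summand $V_1=V^{(1)}$, and observe that the remaining data (your set $\mathcal{D}$, the paper's bijection of summands) depends only on $K$. Your use of the ``excess identity'' $\sum a_i(\deg V_{r_i}-1)=K$ and your explicit treatment of the boundary case $c_j=0$ via $V^{(0)}=\mathbb{F}$ are slightly cleaner than the paper's presentation, but the substance is the same.
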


\begin{proof}
By Lemma \ref{decomp_of_sym}, $\left[A_{k}^{\otimes_{s}n}\right]^{\left(n+K\right)}$
decomposes as a direct sum of subrepresentations of the form 
\begin{gather*}
V_{1}^{\otimes_{s}a_{1}}\otimes\cdot\cdot\cdot\otimes V_{q}^{\otimes_{s}a_{q}},
\end{gather*}

where $V_{1},...,V_{q}$ are irreducible representations arising from
the decomposition of $A_{k}$, such that
\begin{align}
\begin{split}\label{eq:1}
    a_{1}+...+a_{q} & =n
\end{split}\\
\begin{split}\label{eq:2}
    a_{1}\text{deg}\left(V_{1}\right)+...+a_{q}\text{deg}\left(V_{q}\right) & =n+K\,\,\,\,\,\,\,\,\,\,\left(\text{deg}\left(V_{1}\right)\leq....\leq\text{deg}\left(V_{q}\right)\right).
\end{split}    
\end{align}
\textbf{Claim}. For every $n>K$, if $V_{1}^{\otimes_{s}a_{1}}\otimes V_{2}^{\otimes_{s}a_{2}}\otimes\cdot\cdot\cdot\otimes V_{q}^{\otimes_{s}a_{q}}$
is a summand in $\left[A_{k}^{\otimes_{s}n}\right]^{\left(n+K\right)}$,
then  $a_{1}$ can be written as \begin{gather*}
a_{1}=n-K+c_{1}
\end{gather*}
for some nonzero integer $c_{1}$, such that  $V_{1}^{\otimes_{s}c_{1}}\otimes V_{2}^{\otimes_{s}a_{2}}\cdot\cdot\cdot\otimes V_{q}^{\otimes_{s}a_{q}}$
is a summand in $\left[A_{k}^{\otimes_{s}K}\right]^{\left(2K\right)}$.

\textit{Proof}. Suppose $V_{1}^{\otimes_{s}a_{1}}\otimes V_{2}^{\otimes_{s}a_{2}}\otimes\cdot\cdot\cdot\otimes V_{q}^{\otimes_{s}a_{q}}$
is a summand in $\left[A_{k}^{\otimes_{s}n}\right]^{\left(n+K\right)}$.
Due to Theorem $\ref{Schur}$, we have 
\begin{gather*}
A_{k}=\left[A_{k}\right]^{\left(1\right)}\oplus\left[A_{k}\right]^{\left(2\right)}\oplus\cdot\cdot\cdot=\left(V^{\left(1\right)}\right)\oplus\left(V^{\left(2\right)}\oplus V^{\left(1,1\right)}\right)\oplus\cdot\cdot\cdot,
\end{gather*}

so $V^{\left(1\right)}=V=\text{span}_{\mathbb{F}}\left\{ x_{1},...,x_{k}\right\}$ is the only candidate for an irreducible representation
of degree one, and $\text{deg}\left(V_{i}\right)\geq2$ for all
$i\geq2$. Thus 
\begin{flalign*}
n+K & =a_{1}\text{deg}\left(V_{1}\right)+a_{2}\text{deg}\left(V_{2}\right)+...+a_{q}\text{deg}\left(V_{q}\right)\\
 & \geq a_{1}\text{deg}\left(V_{1}\right)+2\left(a_{2}+...+a_{q}\right)=a_{1}\text{deg}\left(V_{1}\right)+2\left(n-a_{1}\right)
\end{flalign*}

and $a_{1}\left(2-\text{deg}\left(V_{1}\right)\right)\geq n-K>0$.
This implies $\text{deg}\left(V_{1}\right)=1$ (i.e., $V_{1}=V$), and $a_{1}\geq n-K$. Writing $a_{1}=n-K+c_{1}$ for some $c_{1}\geq0$, we have \begin{gather*}
c_{1}+a_{2}+...+a_{q}=\left(K-n+a_{1}\right)+a_{2}+...+a_{q}\stackrel{\text{by (\ref{eq:1})}}{=}K
\end{gather*}
and 
\begin{flalign*}
\text{deg}\left(V_{1}^{\otimes_{s}c_{1}}\otimes V_{2}^{\otimes_{s}a_{2}}\cdot\cdot\cdot\otimes V_{q}^{\otimes_{s}a_{q}}\right) & =\text{deg}\left(V_{1}^{\otimes_{s}a_{1}}\otimes V_{2}^{\otimes_{s}a_{2}}\cdot\cdot\cdot\otimes V_{q}^{\otimes_{s}a_{q}}\right)-\left(n-K\right)\\
 & \stackrel{\text{by (\ref{eq:2})}}{=}2K,
\end{flalign*}

and thus $V_{1}^{\otimes_{s}c_{1}}\otimes V_{2}^{\otimes_{s}a_{2}}\cdot\cdot\cdot\otimes V_{q}^{\otimes_{s}a_{q}}$
is a summand in $\left[A_{k}^{\otimes_{s}K}\right]^{\left(2K\right)}$.

In light of this claim, it is not difficult to see that for any $n>K$, there is a bijection
between the summands in $\left[A_{k}^{\otimes_{s}n}\right]^{\left(n+K\right)}$
$ $ and the summands in $\left[A_{k}^{\otimes_{s}K}\right]^{\left(2K\right)}$,
given by 

\begin{gather*}
V_{1}^{\otimes_{s}a_{1}}\otimes V_{2}^{\otimes_{s}a_{2}}\otimes\cdot\cdot\cdot\otimes V_{q}^{\otimes_{s}a_{q}}\mapsto V_{1}^{\otimes_{s}a_{1}-\left(n-K\right)}\otimes V_{2}^{\otimes_{s}a_{2}}\otimes\cdot\cdot\cdot\otimes V_{q}^{\otimes_{s}a_{q}},
\end{gather*}

with an inverse 
\begin{gather*}
V_{1}^{\otimes_{s}b_{1}}\otimes V_{2}^{\otimes_{s}b_{2}}\otimes\cdot\cdot\cdot\otimes V_{l}^{\otimes_{s}b_{l}}\mapsto V_{1}^{\otimes_{s}b_{1}+\left(n-K\right)}\otimes V_{2}^{\otimes_{s}b_{2}}\otimes\cdot\cdot\cdot\otimes V_{l}^{\otimes_{s}b_{l}}.
\end{gather*}

All in all, assuming that the decomposition of $\left[A_{k}^{\otimes_{s}K}\right]^{\left(2K\right)}$
is given by 
\begin{gather*}
\stackrel[i=1]{p}{\bigoplus}V_{1}^{\otimes_{s}c_{i}}\otimes M_{i}\stackrel{\text{(\ref{remark_sym})}}{\cong}\stackrel[i=1]{p}{\bigoplus}V^{\left(c_{i}\right)}\otimes M_{i},
\end{gather*}

where each $M_{i}$ is of the form $V_{2}^{\otimes_{s}a_{2}}\otimes\cdot\cdot\cdot\otimes V_{q}^{\otimes_{s}a_{q}}$
($\text{deg}\left(V_{i}\right)>1$),  the decomposition of $\left[A_{k}^{\otimes_{s}n}\right]^{\left(n+K\right)}$ ($n>K$)
is given by
\begin{gather*}
\stackrel[i=1]{p}{\bigoplus}V_{1}^{\otimes_{s}n-K+c_{i}}\otimes M_{i}\stackrel{\text{(\ref{remark_sym})}}{\cong}\stackrel[i=1]{p}{\bigoplus}V^{\left(n-K+c_{i}\right)}\otimes M_{i}.
\end{gather*}

As $\text{deg}\left(M_{i}\right)\leq2K$ for each $i$, the lemma
follows.
\end{proof}

\subsection{Applying Young's rule}
 
Given two partitions $\mu\vdash l$ and $\lambda\vdash m$,
one may wish to decompose $V^{\lambda}\otimes V^{\mu}$ as a $GL_{k}\left(\mathbb{F}\right)$-representation,
that is, to compute the multiplicities $\left\{ c_{\lambda,\mu}^{\nu}\right\} $
in 
\begin{gather*}
V^{\mu}\otimes V^{\lambda}\cong\bigoplus_{\nu\vdash m+l}c_{\mu,\lambda}^{\nu}V^{\nu}.
\end{gather*}

\nohyphens{The \textit{Littlewood-Richardson rule} is a well-known algorithm
for doing this. For our purposes, we will only consider one special
case of this rule, usually referred to as the Young rule:}

\begin{theorem}[Young's rule. See, e.g., {\cite[p.88]{james2006representation}}]

Let $l,n\in\mathbb{N}$ and $\lambda\vdash n$. For every $\nu\vdash l+n$,
the multiplicity of $V^{\nu}$ in $V^{\left(l\right)}\otimes V^{\lambda}$
equals 
\begin{gather*}
c_{\left(l\right),\lambda}^{\nu}=\begin{cases}
1 & \begin{array}{c}
\text{if the diagram of \ensuremath{\nu} is obtained by adding \ensuremath{l} boxes to the diagram }\\
\text{of \ensuremath{\lambda}, such that no two new boxes are in the same column;}
\end{array}.\\
0 & \text{otherwise}
\end{cases}
\end{gather*}
\end{theorem}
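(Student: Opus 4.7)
The plan is to deduce Young's rule from the character theory of $GL_k(\mathbb{F})$ together with Pieri's formula for symmetric functions.

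First, I would translate the multiplicity problem into one about characters. The character of the Weyl module $V^{\mu}$ is the Schur polynomial $s_{\mu}(x_1,\dots,x_k)$ evaluated on the eigenvalues of $g\in GL_k(\mathbb{F})$, and in particular $V^{(l)}$ is the $l$-th symmetric power of the defining representation, whose character is the complete homogeneous symmetric polynomial $h_l$. Because polynomial $GL_k(\mathbb{F})$-representations are determined up to isomorphism by their characters (a consequence of Theorem \ref{Schur} together with the independence of Schur polynomials), the multiplicity $c_{(l),\lambda}^{\nu}$ is precisely the coefficient of $s_{\nu}$ in the Schur expansion of $h_l \cdot s_{\lambda}$.

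The heart of the proof is then Pieri's rule, which asserts $h_l \cdot s_{\lambda} = \sum_{\nu} s_{\nu}$, where the sum runs over $\nu\supseteq\lambda$ with $\nu/\lambda$ a horizontal strip of size $l$. I would prove this combinatorially from the tableau description $s_{\mu}=\sum_T x^T$, where $T$ ranges over semistandard Young tableaux (SSYT) of shape $\mu$. Concretely, $h_l\cdot s_{\lambda}$ enumerates pairs $(T,w)$ with $T$ an SSYT of shape $\lambda$ and $w$ a weakly increasing word of length $l$, weighted by $x^T x^w$. I would construct a weight-preserving bijection sending $(T,w)$ to an SSYT of some shape $\nu$ obtained from $\lambda$ by appending the letters of $w$ to the right of $T$, placing them row-by-row so as to keep the result semistandard. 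The condition that $\nu/\lambda$ is a horizontal strip---at most one new box per column---arises precisely because placing two entries of $w$ in the same column would force equal entries to sit one above the other, violating the strict column-increase condition.

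The main obstacle is making this bijection precise and verifying its invertibility; this is where one typically either invokes a row-insertion argument in the spirit of RSK, or bypasses combinatorics entirely by deriving Pieri's rule from the Jacobi--Trudi determinantal identity $s_{\lambda}=\det(h_{\lambda_i-i+j})$ via direct manipulations after multiplying by $h_l$. Either route is classical and contains no conceptual surprises beyond careful bookkeeping, which is why the paper appeals to \cite{james2006representation} rather than reproving it.
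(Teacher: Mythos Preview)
The paper does not give its own proof of Young's rule; it simply states the result and cites James \cite{james2006representation}. Your sketch via characters and Pieri's formula is a correct and standard route, and you yourself note at the end that the paper defers to the literature rather than reproving it, so there is nothing further to compare.
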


Identifying $V^{\mu}$ with the Young diagram $D_{\mu}$, Figure \ref{fig:mesh2} illustrates Young's rule for the decomposition of $V^{\left(2\right)}\otimes V^{\left(2,1\right)}$. 

\begin{figure}[h]
    \centering
    \includegraphics[width=0.7\textwidth]{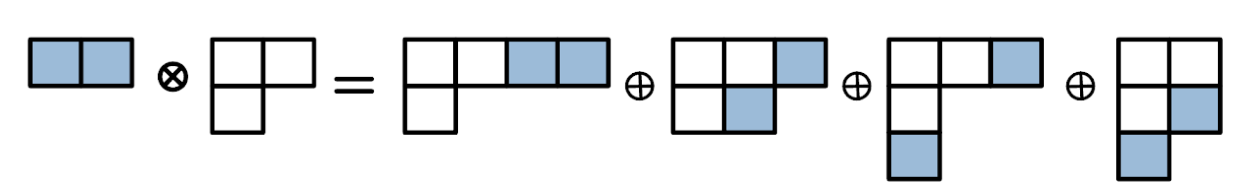}
    \caption{Young's rule for $V^{\left(2\right)}\otimes V^{\left(2,1\right)}$.}
    \label{fig:mesh2}
\end{figure}

\begin{definition}
\label{derived_GL}
In analogy with Definition \ref{derived_def}, we say that a $GL_{k}\left(\mathbb{F}\right)$-representation
$U$ \textit{is derived} from a $GL_{k}\left(\mathbb{F}\right)$-representation
$W$, if $W$ decomposes as $\underset{\mu}{\bigoplus}a_{\mu}V^{\mu}$,
and there exists $d\in\mathbb{N}$ such that $U$ decomposes as $\underset{\mu}{\bigoplus}a_{\mu}V^{\mu^{\left(d\right)}}$. (The notation $\mu^{\left(d\right)}$ was defined in Definition  $\ref{derived}$.)
\end{definition}

\begin{remark}
\label{simplest}
One can easily observe the following:

\textbf{(1)} If $U_{1}$ and $U_{2}$ are derived from $W_{1}$ and $W_{2}$,
respectively, then  $U_{1}\oplus U_{2}$ is derived from $W_{1}\oplus W_{2}$.

\textbf{(2)} Transitivity of derivations:  if $U$ is derived from $W$
and $W$ is derived from $Q$, then $U$ is derived from $Q$.
\end{remark}

Using the terminology of Definition \ref{derived_GL}, the following lemma is a simple application of Young's rule.
\begin{lemma}
\label{mul_derived}
Let $\lambda=\left(\lambda_{1},...,\lambda_{r}\right)$ be a partition
of $m$, and let $l$ be a natural number such that $\lambda_{1}\leq l$
(in particular, this is the case if $\left|\lambda\right|=m\leq l$). Then  $V^{\left(l+1\right)}\otimes V^{\lambda}$ is derived from $V^{\left(l\right)}\otimes V^{\lambda}$.
\end{lemma}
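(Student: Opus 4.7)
The plan is to apply Young's rule directly to both $V^{(l)}\otimes V^{\lambda}$ and $V^{(l+1)}\otimes V^{\lambda}$, and then exhibit an explicit bijection between the summands that sends $V^{\nu}$ to $V^{\nu^{(1)}}$. Recall that Young's rule describes $V^{(l)}\otimes V^{\lambda}$ as the multiplicity-free sum $\bigoplus_{\nu} V^{\nu}$, where $\nu$ ranges over partitions obtained from $\lambda$ by adjoining a horizontal strip of $l$ boxes (no two new boxes in the same column); equivalently, $\nu_1\geq \lambda_1\geq \nu_2\geq \lambda_2\geq\cdots$, with the row-length increments summing to $l$. The same description applies to $V^{(l+1)}\otimes V^{\lambda}$ with $l$ replaced by $l+1$.

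First I would set up the obvious candidate bijection $\Phi:\nu\mapsto \nu^{(1)}$, which adjoins one box to the first row. Forward direction: if $\nu$ arises from $\lambda$ by a horizontal strip of size $l$, then the strip obtained by appending one extra box at the end of the first row is still a horizontal strip of size $l+1$ (the new box sits in column $\nu_1+1\geq \lambda_1+1$, distinct from any box already added in lower rows, all of which occupy columns $\leq \lambda_1$), so $\Phi(\nu)=\nu^{(1)}$ belongs to the summand set of $V^{(l+1)}\otimes V^{\lambda}$.

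For the reverse direction I need every horizontal-strip extension $\nu'$ of $\lambda$ by $l+1$ boxes to be of the form $\nu^{(1)}$ for some valid $\nu$; this is where the hypothesis $\lambda_1\leq l$ enters. The total number of boxes a horizontal strip can place in rows $\geq 2$ is bounded by $(\lambda_1-\lambda_2)+(\lambda_2-\lambda_3)+\cdots+\lambda_r=\lambda_1$, so the first row of any such $\nu'$ must receive at least $(l+1)-\lambda_1\geq 1$ new boxes, giving $\nu'_1\geq \lambda_1+1$. The horizontal strip condition also forces $\nu'_2\leq \lambda_1$, and combining yields $\nu'_1>\nu'_2$. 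Therefore removing one box from the first row of $\nu'$ produces a legitimate partition $\nu$ with $\nu_1\geq \lambda_1$, which is the horizontal-strip extension of $\lambda$ by $l$ boxes with $\Phi(\nu)=\nu'$. So $\Phi$ is a bijection between the summand index sets.

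By Young's rule both tensor products are multiplicity-free, so the bijection $\Phi$ gives $V^{(l+1)}\otimes V^{\lambda}\cong \bigoplus_{\nu}V^{\nu^{(1)}}$, where $\nu$ ranges over the same set of partitions that index the summands of $V^{(l)}\otimes V^{\lambda}$. By Definition \ref{derived_GL} (with $d=1$), this is exactly the assertion that $V^{(l+1)}\otimes V^{\lambda}$ is derived from $V^{(l)}\otimes V^{\lambda}$. The only subtle point in the argument is verifying that the hypothesis $\lambda_1\leq l$ suffices to force $\nu'_1>\max(\lambda_1,\nu'_2)$ for every $\nu'$ in the target set, which is the step I expect to need the most care; everything else is a direct application of Young's rule.
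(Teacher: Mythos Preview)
Your proof is correct and follows essentially the same approach as the paper's: both use Young's rule and the observation that since $D_\lambda$ has only $\lambda_1\leq l$ columns, any horizontal strip of $l+1$ boxes must place at least one box in the first row, so that removing it yields the required bijection $\nu'\mapsto\nu$. The paper's version is terser (it does not spell out the forward direction or the partition check $\nu_1'>\nu_2'$), but the substance is identical.
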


\begin{proof}
By Young's rule, $c_{\left(l+1\right),\lambda}^{\mu}=1$ if and only
if $D_{\mu}$ is obtained by adding $l+1$ boxes to the Young diagram $D_{\lambda}$ such that no two new boxes are in the same column.
As $\lambda_{1}\leq l$, $D_{\lambda}$ has at most $l$ columns,
so in every addition of at least $l+1$ new boxes in a legitimate manner, at least one will be added at the first row.
\end{proof}
This lemma is illustrated in Figure \ref{fig:mesh3}.
\begin{figure}[H]
    \centering
    \includegraphics[width=0.7\textwidth]{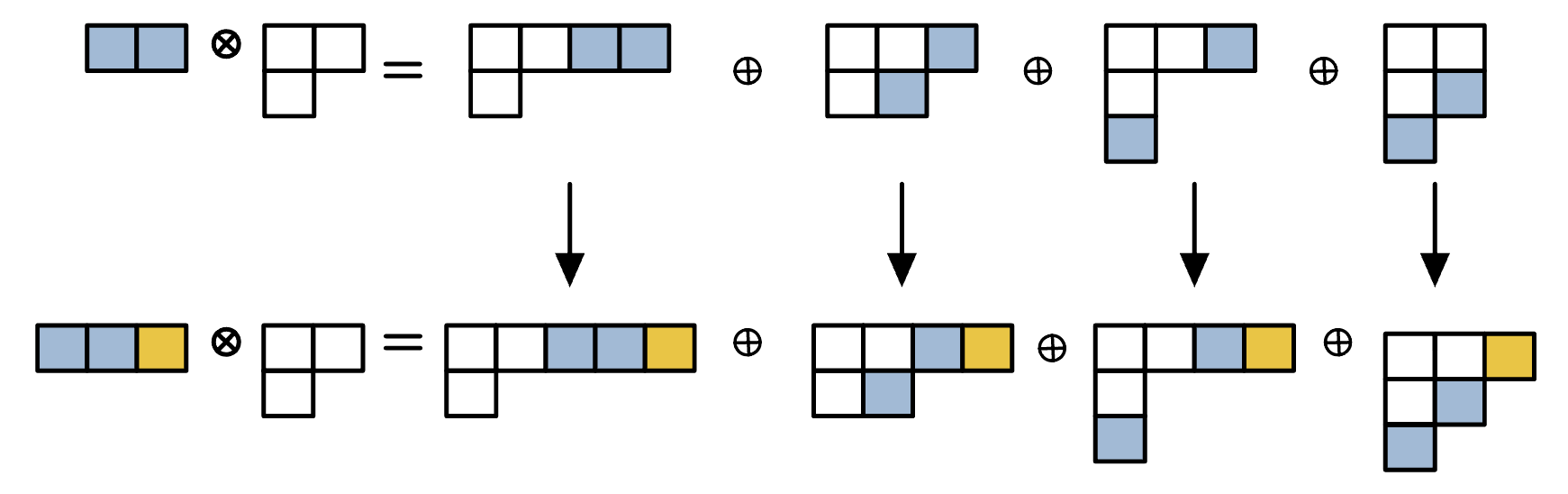}
    \caption{ $V^{\left(3\right)}\otimes V^{\left(2,1\right)}$
  is derived from $V^{\left(2\right)}\otimes V^{\left(2,1\right)}$}
    \label{fig:mesh3}
\end{figure}

\begin{corollary}
\label{der_M}
Suppose a $GL_{k}\left(\mathbb{F}\right)$-representation $M$
decomposes into irreducible representations as
$M=\stackrel[i=1]{q}{\bigoplus}V^{\lambda\left(i\right)}$,
and let $l$ be a natural number such that, for each  $i=1,...,q$, the degree of $V^{\lambda\left(i\right)}$
is at most $l$.  Then

\textbf{(1)} For every $a\in\mathbb{N}$,
if $\mu$ is a partition with $\ell\left(\lambda\right)>l+1$,
the multiplicity of $V^{\lambda}$
in $V^{\left(a\right)}\otimes M$ is zero.

\textbf{(2)}  $V^{\left(l+1\right)}\otimes M$ is derived
from $V^{\left(l\right)}\otimes M$.

\end{corollary}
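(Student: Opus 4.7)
The plan is to reduce both parts of the corollary to earlier results by distributing the tensor product over the decomposition $M=\bigoplus_{i=1}^{q}V^{\lambda(i)}$, and then exploit the bound $\lambda(i)_{1},\,\ell(\lambda(i))\leq|\lambda(i)|=\deg(V^{\lambda(i)})\leq l$, which is immediate from the hypothesis.

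For (1), I would first write $V^{(a)}\otimes M\cong\bigoplus_{i=1}^{q}V^{(a)}\otimes V^{\lambda(i)}$, so it suffices to show that no irreducible $V^{\mu}$ with $\ell(\mu)>l+1$ appears in $V^{(a)}\otimes V^{\lambda(i)}$ for any $i$. By Young's rule, any $\mu$ that appears in $V^{(a)}\otimes V^{\lambda(i)}$ has $D_{\mu}$ obtained from $D_{\lambda(i)}$ by adjoining $a$ boxes, no two in the same column. A brief inspection shows that such an addition can enlarge the row count by at most one: if two entirely new rows were created below $D_{\lambda(i)}$, then column $1$ would pick up at least two new boxes (since every nonempty row contains a box in column $1$), contradicting the column-disjointness condition. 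Hence $\ell(\mu)\leq\ell(\lambda(i))+1\leq|\lambda(i)|+1\leq l+1$, which yields (1).

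For (2), I would distribute analogously to obtain
\begin{gather*}
V^{(l+1)}\otimes M\cong\bigoplus_{i=1}^{q}V^{(l+1)}\otimes V^{\lambda(i)},\qquad V^{(l)}\otimes M\cong\bigoplus_{i=1}^{q}V^{(l)}\otimes V^{\lambda(i)},
\end{gather*}
so by Remark \ref{simplest}(1) it is enough to prove that each summand $V^{(l+1)}\otimes V^{\lambda(i)}$ is derived from $V^{(l)}\otimes V^{\lambda(i)}$. Because $\lambda(i)_{1}\leq|\lambda(i)|\leq l$, the hypothesis of Lemma \ref{mul_derived} is satisfied for every $i$, and that lemma directly delivers the required derivation.

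I do not anticipate a genuine obstacle here: the corollary is essentially a bookkeeping consequence of Young's rule combined with Lemma \ref{mul_derived} and Remark \ref{simplest}(1). The only step that demands a sentence of real justification is the "length increases by at most one" observation in part (1); the rest follows mechanically from the direct-sum decomposition of $M$ and the hypothesis on the degrees of its summands.
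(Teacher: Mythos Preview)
Your proposal is correct and follows essentially the same approach as the paper: distribute the tensor product over the decomposition of $M$, bound $\ell(\lambda(i))$ and $\lambda(i)_{1}$ by $|\lambda(i)|\leq l$, invoke Young's rule for part (1), and apply Lemma~\ref{mul_derived} together with Remark~\ref{simplest}(1) for part (2). Your explicit justification that the row count increases by at most one is a nice touch; the paper simply asserts this as an immediate consequence of Young's rule.
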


\begin{proof}
By assumption, for every $i=1,...,q$~ the diagram of $\lambda\left(i\right)$
has at most $l$ rows. Hence, for every $a\in\mathbb{N}$, if $V^{\mu}$
is a summand in $V^{\left(a\right)}\otimes V^{\lambda\left(i\right)}$
then the diagram of $\mu$ has at most $l+1$ rows by Young's rule,
and \textbf{(1)} is proved.

For \textbf{(2)}, note that according to Lemma $\ref{mul_derived}$, 
 $V^{\left(l+1\right)}\otimes V^{\lambda\left(i\right)}$ is derived
from $V^{\left(l\right)}\otimes V^{\lambda\left(i\right)}$
for each $i$. Hence, by part \textbf{(1)} of Remark \ref{simplest}, the decomposition of $V^{\left(l+1\right)}\otimes M$
is derived from $V^{\left(l\right)}\otimes M$
as well. 
\end{proof}

Now we can state the main result of this section:

\begin{theorem}
\label{pre_main}
Fix $K\in\mathbb{N}$.

\textbf{(1)} For every $n\geq K$, if $\mu$ is a partition with $\ell\left(\mu\right)>2K+1$,
the multiplicity of $V^{\mu}$ in $\left[A_{k}^{\otimes_{s}n}\right]^{\left(n+K\right)}$
is zero.

\textbf{(2)} For every $n\geq3K$, $\left[A_{k}^{\otimes_{s}n}\right]^{\left(n+K\right)}$
is derived from $\left[A_{k}^{\otimes_{s}3K}\right]^{\left(4K\right)}$.

\textbf{(3)}  Let $\underset{\lambda\vdash n+K}{\bigoplus}l_{n,n+K}^{\lambda}V^{\lambda}$
be the decomposition of $\left[A_{k}^{\otimes_{s}n}\right]^{\left(n+K\right)}$.
There exists an integer $D>0$, such that for every $n\in\mathbb{N}$ 
\begin{gather*}
\underset{\lambda\vdash n+K}{\sum}l_{n,n+K}^{\lambda}\leq D.
\end{gather*}

\end{theorem}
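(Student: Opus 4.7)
The plan is to apply Lemma \ref{M_lemma} systematically, which supplies the uniform decomposition $[A_k^{\otimes_s n}]^{(n+K)} \cong \bigoplus_{i=1}^{p} V^{(n-K+c_i)} \otimes M_i$ for every $n \geq K$, with each $M_i$ homogeneous of degree at most $2K$ (the case $n=K$ is part \textbf{(1)} of the lemma). All three parts then reduce to statements about $V^{(l)} \otimes M_i$ as $l$ varies, for which Young's rule and the packaged consequences in Corollary \ref{der_M} do the work. For part \textbf{(1)}, decompose each $M_i$ into irreducibles $V^\nu$; the degree constraint forces $|\nu| \leq 2K$, hence $\ell(\nu) \leq 2K$, so Corollary \ref{der_M}\textbf{(1)} with $l = 2K$ gives that no $V^\mu$ with $\ell(\mu) > 2K+1$ appears in any $V^{(n-K+c_i)} \otimes M_i$, hence none appears in the direct sum.

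For part \textbf{(2)}, Lemma \ref{M_lemma} gives $[A_k^{\otimes_s 3K}]^{(4K)} \cong \bigoplus_i V^{(2K+c_i)} \otimes M_i$, and Corollary \ref{der_M}\textbf{(2)} with $l = 2K$ shows that incrementing the first row of $V^{(l)} \otimes M_i$ by one box is a derivation whenever $l \geq 2K$. Iterating this $n - 3K$ times and invoking transitivity (Remark \ref{simplest}\textbf{(2)}), $V^{(n-K+c_i)} \otimes M_i$ is derived from $V^{(2K+c_i)} \otimes M_i$ with common parameter $d = n - 3K$ for every $i$; Remark \ref{simplest}\textbf{(1)} then assembles these piecewise derivations into a derivation of the full direct sums.

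For part \textbf{(3)}, further decompose each $M_i = \bigoplus_j V^{\nu_{i,j}}$ with $|\nu_{i,j}| \leq 2K$; the number of such pairs $(i,j)$ is a constant $C = C(K,k)$. By Young's rule each $V^{(n-K+c_i)} \otimes V^{\nu_{i,j}}$ is multiplicity-free, and its number of irreducible summands is the number of horizontal strips of size $n - K + c_i$ adjoinable to $D_{\nu_{i,j}}$. Once $\mu_2, \ldots, \mu_{\ell(\nu_{i,j})+1}$ are chosen within their bounded, $\nu_{i,j}$-dependent ranges, $\mu_1$ is determined by the total degree, so this count is at most $(2K+1)^{2K}$ independently of $n$. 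Hence for $n \geq K$ the total number of summands is at most $C \cdot (2K+1)^{2K}$; the finitely many $n < K$ yield finite, bounded contributions, and taking a maximum gives the desired $D$.

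The only real obstacle I foresee is bookkeeping in part \textbf{(2)}: one must verify that the derivation parameter $d = n - 3K$ produced in each summand $V^{(2K+c_i)} \otimes M_i \longrightarrow V^{(n-K+c_i)} \otimes M_i$ is genuinely the same $d$ for every $i$, so that Remark \ref{simplest}\textbf{(1)} can be invoked. This does hold because the shift $(n-K+c_i) - (2K+c_i) = n - 3K$ is $i$-independent; once this is noted, the entire argument is a direct orchestration of the preparatory material already established in this section.
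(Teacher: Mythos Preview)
Your argument is correct, and for parts \textbf{(1)} and \textbf{(2)} it is essentially the paper's proof: both apply Lemma~\ref{M_lemma} to get the uniform decomposition $\bigoplus_i V^{(n-K+c_i)}\otimes M_i$, then feed each summand into Corollary~\ref{der_M}. The only cosmetic difference in \textbf{(2)} is that the paper phrases the iteration as an induction on $n$, showing $[A_k^{\otimes_s n+1}]^{(n+K+1)}$ is derived from $[A_k^{\otimes_s n}]^{(n+K)}$ at each step, while you fix the two endpoints $n=3K$ and general $n$ and iterate Corollary~\ref{der_M}\textbf{(2)} componentwise; your explicit check that the shift $d=n-3K$ is $i$-independent is exactly the bookkeeping the paper's inductive formulation hides.

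For part \textbf{(3)} you take a genuinely different route. The paper simply observes that derivation (in the sense of Definition~\ref{derived_GL}) preserves the total number of irreducible summands, so \textbf{(2)} immediately gives $\sum_\lambda l_{n,n+K}^\lambda = \sum_\lambda l_{3K,4K}^\lambda$ for all $n\ge 3K$, and the finitely many smaller $n$ are handled trivially. You instead bound the number of horizontal strips directly via Young's rule, arguing that once $\mu_2,\dots,\mu_{\ell(\nu)+1}$ are chosen in their $\nu$-bounded ranges, $\mu_1$ is forced. This is correct and self-contained, but it does more work than needed: once \textbf{(2)} is in hand, \textbf{(3)} is a one-line consequence, and there is no need to revisit the combinatorics of Young's rule.
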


\begin{proof}

\textbf{(1)}  By Lemma \ref{M_lemma}, $\left[A_{k}^{\otimes_{s}n}\right]^{\left(n+K\right)}$
decomposes as a direct sum of irreducibles, each of the form $V^{\left(n-K+c_{i}\right)}\otimes M_{i}$,
$M_{i}$ of degree at most $2K$. Thus, decomposing $M_{i}=\underset{\lambda}{\bigoplus}V^{\lambda}$,
we have $\ell\left(\lambda\right)\leq2K$ for each $\lambda$ involved in the
decomposition. Part \textbf{(1)} of Corollary \ref{der_M} now yields that if  $\ell\left(\mu\right)>2K+1$,
the multiplicity of $V^{\mu}$ in $V^{\left(n-K+c_{i}\right)}\otimes M_{i}$
is zero.

We prove \textbf{(2)} by induction on $n\geq3K$. If $n=3K$, there is nothing to show. Assuming that \textbf{(2)} is true for $n>3K$, by the transitivity of derivations  (Remark \ref{simplest}\textbf{(2)}) it is enough to show that $\left[A_{k}^{\otimes_{s}n+1}\right]^{\left(n+K+1\right)}$
is derived from $\left[A_{k}^{\otimes_{s}n}\right]^{\left(n+K\right)}$.

According to  Lemma \ref{M_lemma}, there are $GL_{k}\left(\mathbb{F}\right)$-representations $M_{1},...,M_{p}\subseteq A_{k}$
of degree at most $2K$, and non-negative integers $c_{1},...,c_{p}$, such
that 
\begin{flalign*}
\left[A_{k}^{\otimes_{s}n}\right]^{\left(n+K\right)} & \cong\stackrel[i=1]{q}{\bigoplus}V^{\left(n-K+c_{i}\right)}\otimes M_{i}\\
\left[A_{k}^{\otimes_{s}n}\right]^{\left(n+K+1\right)} & \cong\stackrel[i=1]{q}{\bigoplus}V^{\left(n-K+c_{i}+1\right)}\otimes M_{i}.
\end{flalign*}

By Remark \ref{simplest}\textbf{(1)}, it is enough to prove that for each
$i$,  $V^{\left(n-K+c_{i}+1\right)}\otimes M_{i}$
is derived from  $V^{\left(n-K+c_{i}\right)}\otimes M_{i}$.
As we assume $n>3K$, we obtain  $n-K+c_{i}\geq n-K>2K$,
and thus we are done by part \textbf{(2)} of Corollary \ref{der_M}.

Finally, 
for \textbf{(3)}, as $\left[A_{k}^{\otimes_{s}n}\right]^{\left(n+K\right)}$
is derived from $\left[A_{k}^{\otimes_{s}3K}\right]^{\left(4K\right)}$ for every $n\geq3K$, we clearly have $\underset{\lambda\vdash n+K}{\sum}l_{n,n+K}^{\lambda}=\underset{\lambda\vdash4K}{\sum}l_{3K,4K}^{\lambda}$
for every $n\geq3K$, implying the existence of such a $D$.

\end{proof}

\section{Stabilization of \texorpdfstring{$W_{n,n+K}$}{Lg} }

In this section we prove our main result, which is Theorem \ref{stab}. We start by describing
the $T$-ideal $\left(x^{n}\right)^{T}\triangleleft\mathbb{F}\left\langle X\right\rangle $
as a vector space. An essential tool for doing this is the \textit{$n$-th
symmetric polynomial}, defined by
\begin{gather*}
P_{n}\left(x_{1},...,x_{n}\right):=\underset{\sigma\in S_{n}}{\sum}x_{\sigma\left(1\right)}\cdot\cdot\cdot x_{\sigma\left(n\right)}\in\mathbb{F}\left\langle X\right\rangle .
\end{gather*}

The following proposition is due to Higman and Hall:

\begin{proposition}[\cite{higman1956conjecture}. See also  {\cite[p.77]{drensky2012polynomial}}]
\label{hig}
As a vector space over $\mathbb{F}$, the $T$-ideal $\left(x^{n}\right)^{T}\triangleleft\mathbb{F}\left\langle X\right\rangle $
is spanned by the polynomials 
\begin{gather*}
P_{n}\left(m_{1},...,m_{n}\right),
\end{gather*}
where $m_{1},...,m_{n}$ are monomials in $\mathbb{F}\left\langle X\right\rangle $.
\end{proposition}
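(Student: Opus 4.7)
The plan is to prove both inclusions separately. Let $J := \text{span}_{\mathbb{F}}\{P_{n}(m_{1},\ldots,m_{n}) : m_{i}\text{ monomials in }\mathbb{F}\left\langle X\right\rangle \}$, so the goal is to show $J=(x^{n})^{T}$.

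For the easy direction $J\subseteq(x^{n})^{T}$, I would use the polarization identity, valid in characteristic zero,
$$P_{n}(y_{1},\ldots,y_{n}) = \sum_{\varnothing\neq S\subseteq\{1,\ldots,n\}} (-1)^{n-|S|}\Bigl(\sum_{i\in S}y_{i}\Bigr)^{n}.$$
This writes $P_{n}$ as an $\mathbb{F}$-linear combination of $n$-th powers $h^{n}=\psi(x^{n})$ (with $\psi$ the substitution $x\mapsto h$), each of which lies in $(x^{n})^{T}$. Applying the $T$-ideal property once more under the substitution $x_{i}\mapsto m_{i}$ concludes this direction.

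For the reverse inclusion $(x^{n})^{T}\subseteq J$, I would prove that $J$ is itself a $T$-ideal containing $x^{n}$. The containment $x^{n}\in J$ is immediate from $P_{n}(x,x,\ldots,x)=n!\,x^{n}$. Closure of $J$ under every endomorphism $\psi$ of $\mathbb{F}\left\langle X\right\rangle $ is automatic from $\psi(P_{n}(m_{1},\ldots,m_{n}))=P_{n}(\psi(m_{1}),\ldots,\psi(m_{n}))$ and the $n$-linearity of $P_{n}$, which lets one expand each $\psi(m_{i})$ as a linear combination of monomials. The real work (and the main obstacle) is showing $J$ is a two-sided ideal, i.e.\ that $y\cdot P_{n}(m_{1},\ldots,m_{n})\in J$ for every monomial $y$ (the right side being symmetric).

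To handle this I would compute $P_{n}$ at several specialized arguments. For monomials $a, g$, a direct count of the permutations yields
\begin{align*}
\tfrac{1}{(n-1)!}\,P_{n}(a,g,\ldots,g) & = \sum_{k=0}^{n-1}g^{k}a\,g^{n-1-k},\\
\tfrac{1}{(n-1)!}\,P_{n}(ag,g,\ldots,g) & = \sum_{k=0}^{n-1}g^{k}a\,g^{n-k},\\
\tfrac{1}{(n-1)!}\,P_{n}(ga,g,\ldots,g) & = \sum_{k=1}^{n}g^{k}a\,g^{n-k},
\end{align*}
and a further identity of the same flavour drops out of $P_{n}(a,g^{2},g,\ldots,g)$. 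All four left-hand sides lie in $J$; treating the right-hand sides as a small linear system in the "insertions" $g^{k}a\,g^{n-k}$, and using $\mathrm{char}(\mathbb{F})=0$ to invert the integer-coefficient matrix, one isolates individual terms such as $ag^{n}$ and $g^{n}a$ inside $J$. A second round of identities coming from $P_{n}(ag,g,\ldots,g,gb)$ and $P_{n}(a,g^{2}b,g,\ldots,g)$ then places $ag^{n}b$ in $J$ for all monomials $a,b,g$. Since $(x^{n})^{T}$ is spanned as a vector space by elements of the form $a g^{n} b$ (with the convention that the prefix or suffix may be empty), and $J$ is closed under $\mathbb{F}$-linear expansion of $a,b$ together with polarization in $g$, the inclusion $(x^{n})^{T}\subseteq J$ follows.

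The main obstacle is this last linear-algebra step: extracting $ag^{n}$, $g^{n}a$, and eventually $ag^{n}b$ from the $P_{n}$-identities listed above. It is precisely here that the characteristic zero hypothesis is used, via the non-singularity of the resulting integer-coefficient matrix.
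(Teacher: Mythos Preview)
The paper does not prove this proposition at all; it is simply cited as a classical result of Higman--Hall (with a pointer to Drensky--Formanek). So there is no ``paper's proof'' to compare against, and I will just assess your argument on its own merits.

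Your inclusion $J\subseteq(x^{n})^{T}$ via the polarization identity is correct, and your ``first round'' of identities really does isolate $ag^{n}$ and $g^{n}a$: from
\[
\tfrac{1}{(n-1)!}\bigl(P_{n}(ag,g,\ldots,g)-P_{n}(ga,g,\ldots,g)\bigr)=ag^{n}-g^{n}a
\]
together with the computation of $\tfrac{1}{(n-2)!}P_{n}(a,g^{2},g,\ldots,g)=(n-1)(ag^{n}+g^{n}a)+(n-2)\sum_{k=1}^{n-1}g^{k}ag^{n-k}$ and the sum $\tfrac{1}{(n-1)!}\bigl(P_{n}(ag,\ldots)+P_{n}(ga,\ldots)\bigr)=(ag^{n}+g^{n}a)+2\sum_{k=1}^{n-1}g^{k}ag^{n-k}$, one eliminates the middle sum and obtains $n(ag^{n}+g^{n}a)\in J$. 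So $ag^{n},\,g^{n}a\in J$ for all monomials $a,g$; since these identities hold formally in $\mathbb{F}\langle a,g\rangle$ and $P_{n}$ is multilinear, the same conclusion holds with $g$ replaced by any \emph{polynomial} $h$.

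The weak point is your ``second round'' for $ag^{n}b$: the identities you name ($P_{n}(ag,g,\ldots,g,gb)$ and $P_{n}(a,g^{2}b,g,\ldots,g)$) produce sums mixing terms of the form $g^{\alpha}ag^{\beta}bg^{\gamma}$ and $g^{\alpha}bg^{\beta}ag^{\gamma}$ in a way that does not obviously isolate $ag^{n}b$, and you do not carry out the elimination. In fact this whole second round is unnecessary. Once you know $h^{n}b\in J$ for every polynomial $h$ and every monomial $b$, apply polarization \emph{again}, this time to $P_{n}$ itself:
\[
P_{n}(m_{1},\ldots,m_{n})\,b=\sum_{\varnothing\neq S\subseteq[n]}(-1)^{n-|S|}\Bigl(\sum_{i\in S}m_{i}\Bigr)^{n}b\in J,
\]
since each summand is of the form $h_{S}^{n}b\in J$. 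This shows directly that $J$ is a right ideal; the left-ideal property is symmetric. Combined with closure under endomorphisms and $x^{n}=\tfrac{1}{n!}P_{n}(x,\ldots,x)\in J$, this gives $(x^{n})^{T}\subseteq J$ without any further linear-algebra extraction. So your outline is sound, but the missing step is not the one you flagged---it is the observation that one more use of the polarization identity closes the argument immediately.
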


Recall that $A_{k}$ denotes the free algebra $\mathbb{F}\left\langle x_{1},...,x_{k}\right\rangle $.
Using Proposition \ref{hig}, Drensky and Benanti have made the following remarkable observation.

\begin{proposition}
\label{omomorphic}
As a $GL_{k}\left(\mathbb{F}\right)$-representation, $A_{k}\cap\left(x^{n}\right)^{T}$
is a homomorphic image of $A_{k}^{\otimes_{s}n}$, via the map 
\begin{flalign*}
\phi & :A_{k}^{\otimes_{s}n}\rightarrow A_{k}\cap\left(x^{n}\right)^{T}\\
\phi & \left(z_{1}\otimes_{s}\cdot\cdot\cdot\otimes_{s}z_{n}\right)=P_{n}\left(z_{1},...,z_{n}\right).
\end{flalign*}

Similarly, $\left[A_{k}\right]^{\left(m\right)}\cap\left(x^{n}\right)^{T}$ is
a homomorphic image of $\left[A_{k}^{\otimes_{s}n}\right]^{\left(m\right)}$,
the polynomials in $A_{k}^{\otimes_{s}n}$ of  degree $m$. 
\end{proposition}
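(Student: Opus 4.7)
The plan is to check four things about $\phi$: well-definedness, containment of the image in $A_k \cap (x^n)^T$, $GL_k(\mathbb{F})$-equivariance, and surjectivity.

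For well-definedness, the assignment $(z_1,\ldots,z_n) \mapsto P_n(z_1,\ldots,z_n)$ is multilinear by distributivity in $A_k$ and symmetric in its $n$ arguments, since $P_n$ already sums over all orderings. Hence it factors through $A_k^{\otimes n}$ and then through the symmetric quotient, giving a well-defined linear map. For the image: in characteristic zero every $T$-ideal is closed under complete linearization, and $P_n$ is the complete linearization of $x^n$, so $P_n \in (x^n)^T$. Substituting $x_i \mapsto z_i \in A_k$ gives a homomorphism of $\mathbb{F}\langle X\rangle$ that preserves the $T$-ideal, so $\phi(z_1 \otimes_s \cdots \otimes_s z_n) \in A_k \cap (x^n)^T$. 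Equivariance is immediate since any $g \in GL_k(\mathbb{F})$ extends to an algebra endomorphism of $A_k$, whence $g \cdot P_n(z_1,\ldots,z_n) = P_n(gz_1,\ldots,gz_n)$, matching the diagonal action of $g$ on $A_k^{\otimes_s n}$.

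The core of the proof is surjectivity. By Proposition~\ref{hig}, every element of $(x^n)^T$ is a linear combination of elements $P_n(m_1,\ldots,m_n)$ with $m_i$ monomials in $\mathbb{F}\langle X \rangle$. Given $f \in A_k \cap (x^n)^T$, write $f = \sum c_j P_n(m_{j,1},\ldots,m_{j,n})$ with arbitrary monomials, and apply the substitution homomorphism $\psi : \mathbb{F}\langle X\rangle \to A_k$ that fixes $x_1,\ldots,x_k$ and sends $x_s \mapsto 0$ for $s>k$. Since $f \in A_k$ we have $\psi(f) = f$, while each $\psi(P_n(m_{j,1},\ldots,m_{j,n}))$ is either zero (if some $m_{j,i}$ involves an excluded variable) or equals the original summand. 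Discarding zero terms expresses $f$ as a sum of $P_n(m_1,\ldots,m_n)$ with $m_i \in A_k$, each of which lies in the image of $\phi$. For the graded statement, note that $\phi$ preserves total degree, so restricting to the degree-$m$ components on both sides and retaining only the summands of total degree $m$ in the above decomposition yields surjectivity of $[A_k^{\otimes_s n}]^{(m)} \to [A_k]^{(m)} \cap (x^n)^T$.

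I expect the main obstacle to be this last step — the passage from Higman's spanning set, whose monomials can involve arbitrary variables from $X$, to the monomials in $A_k$ actually hit by $\phi$. The kill-extra-variables homomorphism $\psi$ resolves it, and it is the only non-formal ingredient.
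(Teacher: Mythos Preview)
The paper does not actually give a proof of this proposition; it attributes the observation to Drensky and Benanti \cite{benanti1999polynomial} and moves on. Your argument is correct and is the natural one: well-definedness and equivariance are formal, and surjectivity follows from Higman's spanning set (Proposition~\ref{hig}) together with the substitution $\psi$ that kills the variables $x_s$ with $s>k$, which is precisely the step the paper's phrasing ``Using Proposition~\ref{hig}'' points to.
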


This proposition allows us to prove one of our key results. 

\begin{theorem}
\label{upper_W}

Fix $K\in\mathbb{N}$, and let $\underset{\lambda\vdash n+K}{\bigoplus}m_{n,n+K}^{\lambda}S^{\lambda}$
be the decomposition of $W_{n,n+K}$ into $S_{n+K}$-irreducible representations.

$\textbf{(1)}$ For every $\lambda\vdash n+K$ such that $\ell\left(\lambda\right)>2K+1$,
$m_{n,n+K}^{\lambda}=0$.

\textbf{(2)} For every $n\geq3K$,  if $m_{n,n+K}^{\lambda}\neq0$
then $\lambda$ is derived (in the sense of Definition \ref{derived}) from some partition of $4K$. 

$\textbf{(3)}$ There exists $D\in\mathbb{N}$ (depending only on $K$),
such that for every $n$ 
\begin{gather*}
\underset{\lambda\vdash n+K}{\sum}m_{n,n+K}^{\lambda}\leq D.
\end{gather*}

\end{theorem}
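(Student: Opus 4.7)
The plan is to reduce each of the three claims to the corresponding claim of Theorem \ref{pre_main} for the $GL_k(\mathbb{F})$-representation $[A_k^{\otimes_{s}n}]^{(n+K)}$, using two tools already in place: (a) Theorem \ref{Connection_Sn_Glk}, which identifies the $S_{n+K}$-multiplicity $m_{n,n+K}^{\lambda}$ with the $GL_k(\mathbb{F})$-multiplicity of $V^{\lambda}$ in $[A_k]^{(n+K)}\cap(x^{n})^{T}$ whenever $\ell(\lambda)\leq k$; and (b) Proposition \ref{omomorphic}, which exhibits $[A_k]^{(n+K)}\cap(x^{n})^{T}$ as a $GL_k(\mathbb{F})$-equivariant quotient of $[A_k^{\otimes_{s}n}]^{(n+K)}$. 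Since the multiplicity of any irreducible in a quotient is at most its multiplicity in the source, combining (a) and (b) will give that for every $\lambda$ with $\ell(\lambda)\leq k$, the multiplicity $m_{n,n+K}^{\lambda}$ is bounded above by the multiplicity of $V^{\lambda}$ in $[A_k^{\otimes_{s}n}]^{(n+K)}$.

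For \textbf{(1)}, I would fix $\lambda\vdash n+K$ with $\ell(\lambda)>2K+1$ and take any $k\geq\ell(\lambda)$. Applying the comparison above together with Theorem \ref{pre_main}\textbf{(1)} gives $m_{n,n+K}^{\lambda}=0$ at once. For \textbf{(2)} and \textbf{(3)}, I would fix $k=2K+1$ once and for all. By part \textbf{(1)}, every partition $\lambda\vdash n+K$ with $m_{n,n+K}^{\lambda}\neq 0$ satisfies $\ell(\lambda)\leq 2K+1=k$, so the comparison is in force. Then \textbf{(2)} follows from Theorem \ref{pre_main}\textbf{(2)}: for $n\geq 3K$ the representation $[A_k^{\otimes_{s}n}]^{(n+K)}$ is derived (in the sense of Definition \ref{derived_GL}) from $[A_k^{\otimes_{s}3K}]^{(4K)}$, so every irreducible $V^{\lambda}$ appearing in it satisfies $\lambda=\mu^{(n-3K)}$ for some $\mu\vdash 4K$, and the same must hold for every irreducible appearing in the quotient $[A_k]^{(n+K)}\cap(x^{n})^{T}$. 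Finally, \textbf{(3)} follows from Theorem \ref{pre_main}\textbf{(3)}: taking $D$ to be the constant bounding the total multiplicity of $[A_k^{\otimes_{s}n}]^{(n+K)}$ (for our fixed $k=2K+1$), the comparison yields $\underset{\lambda\vdash n+K}{\sum}m_{n,n+K}^{\lambda}\leq D$ for all $n$.

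The only place one must be slightly careful is the quotient step in \textbf{(2)}: one needs to observe that taking a $GL_k(\mathbb{F})$-equivariant quotient of a finite-dimensional polynomial representation cannot introduce new isomorphism types of irreducible constituents, only reduce their multiplicities. This is immediate from Schur's lemma and the complete reducibility of polynomial $GL_k(\mathbb{F})$-representations in characteristic zero. Beyond this book-keeping point, the proof is a direct translation from the $GL_k(\mathbb{F})$-statements of Theorem \ref{pre_main} to the $S_{n+K}$-statements of Theorem \ref{upper_W}, made possible by the Berele--Drensky dictionary. The genuinely non-trivial content of the theorem has already been packaged into Theorem \ref{pre_main}, so no further combinatorial work on Young diagrams or on $(x^n)^T$ should be required here.
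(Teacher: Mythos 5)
Your proposal is correct and follows essentially the same route as the paper: the paper likewise combines Theorem \ref{Connection_Sn_Glk} with Proposition \ref{omomorphic} and then reads off all three parts from Theorem \ref{pre_main}. The only difference is that you spell out the choice of $k$ and the observation that multiplicities cannot increase in a $GL_k(\mathbb{F})$-equivariant quotient, details the paper leaves implicit.
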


\begin{proof}
By Theorem \ref{Connection_Sn_Glk}, for every $k$, we have an isomorphism of $GL_{k}\left(\mathbb{F}\right)$-representations
\begin{gather*}
\left[A_{k}\right]^{\left(n+K\right)}\cap\left(x^{n}\right)^{T}\cong\underset{\underset{\ell\left(\lambda\right)\leq k}{\lambda\vdash n+K}}{\bigoplus}m_{n,n+K}^{\lambda}V^{\lambda}.
\end{gather*}

On the other hand, according to Proposition \ref{omomorphic}, $\left[A_{k}\right]^{\left(n+K\right)}\cap\left(x^{n}\right)^{T}$
is a homomorphic image of $\left[A_{k}^{\otimes_{s}n}\right]^{\left(n+K\right)}$,
so \textbf{(1)}  \textbf{(2)} and \textbf{(3)} are inferred directly from Theorem
\ref{pre_main}.
\end{proof}

\subsection{The main result}

\begin{definition}
We say that a sequence $\left(a_{n}\right)_{n\in\mathbb{N}}\subset\mathbb{R}$
is \textbf{generally increasing}, if for every $d$ there exists $N_{d}\in\mathbb{N}$
such that for every $n\geq N_{d}$ we have $a_{d}\leq a_{n}$.
\end{definition}
One can easily verify the following:
\begin{observation}
\label{convergent}
If $\left(a_{n}\right)_{n\in\mathbb{N}}$ is generally increasing and bounded, then it is  convergent.
\end{observation}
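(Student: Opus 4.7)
The plan is to prove convergence by showing $\limsup_{n\to\infty} a_n = \liminf_{n\to\infty} a_n$. Since $(a_n)$ is bounded, both $L := \limsup_{n\to\infty} a_n$ and $\ell := \liminf_{n\to\infty} a_n$ exist and are finite, and of course $\ell \leq L$. So the whole task reduces to proving $L \leq \ell$.

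The key step is to exploit the ``generally increasing'' hypothesis to pin down each term $a_d$ below $\ell$. Fix an arbitrary index $d$. By hypothesis there is some $N_d$ such that $a_d \leq a_n$ for all $n \geq N_d$. Taking $\liminf$ over $n$ on both sides gives
\begin{gather*}
a_d \;\leq\; \liminf_{n\to\infty} a_n \;=\; \ell.
\end{gather*}
Since $d$ was arbitrary, this bound holds for every term of the sequence, and therefore $L = \limsup_{d\to\infty} a_d \leq \ell$. Combined with $\ell \leq L$, we conclude $L = \ell$, so $(a_n)$ converges.

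I do not expect any genuine obstacle here; the proof is essentially a one-line application of the definitions of $\limsup$ and $\liminf$, once one realizes that ``generally increasing'' is exactly the condition $a_d \leq \liminf a_n$ for every $d$. The only minor point worth stating explicitly in the write-up is the invocation of boundedness to ensure $L$ and $\ell$ are finite real numbers (as opposed to $\pm\infty$), which is what makes the comparison $L \leq \ell$ meaningful.
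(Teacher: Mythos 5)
Your proof is correct: the observation that ``generally increasing'' means precisely $a_d \leq \liminf_{n\to\infty} a_n$ for every $d$, whence $\limsup_{d\to\infty} a_d \leq \liminf_{n\to\infty} a_n$, is exactly the right verification, with boundedness correctly invoked to keep both quantities finite. The paper states this observation without proof (``One can easily verify the following''), and your argument is the standard, complete way to verify it.
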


Before presenting our main result  (Theorem \ref{stab}), let us give one more theorem.

\begin{theorem}
\label{Tricky}
Fix $K\in\mathbb{N}$ and $\lambda\vdash4K$. There exists $N=N_{K}\in\mathbb{N}$
such that the sequence $\left(m_{n,n+K}^{\lambda^{\left(n-3K\right)}}\right)_{n=N}^{\infty}$
is generally increasing. In other words, 
if $n\geq N$ is large enough, the multiplicity of $S^{\lambda^{\left(n-3K\right)}}$
in $W_{n,n+K}$ is at least the multiplicity of $S^{\lambda^{\left(N-3K\right)}}$
in $W_{N,N+K}$. 

\end{theorem}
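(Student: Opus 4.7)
The plan is to lift a family of witnesses for the multiplicity at level $N$ to witnesses at level $n > N$, via the combinatorial ``padding'' operation foreshadowed by the notations $T^{(s)}$, $P^{(s)}$, $u^{(s)}$ in Table \ref{tab:label_test}. Choose $N \geq 3K$, set $\mu := \lambda^{(N-3K)}$, and let $l := m_{N,N+K}^{\mu}$. By Lemma \ref{mult_is_dim} and Corollary \ref{regev_col}, fix a tableau $T_\mu \in \text{Tab}(\mu)$ and elements $f_1, \ldots, f_l \in W_{N,N+K}$ such that the polynomials $g_i := b_{T_\mu}(y) f_i \in \mathbb{F}\langle Y\rangle$ are linearly independent. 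Since $\mu^{(n-N)} = \lambda^{(n-3K)}$, the task becomes constructing lifts $f_i^{(s)} \in W_{n,n+K}$ (where $s := n - N$) whose images $b_{T_{\mu^{(s)}}}(y)\, f_i^{(s)}$ stay linearly independent for all sufficiently large $s$; Corollary \ref{regev_col} will then deliver $m_{n,n+K}^{\lambda^{(n-3K)}} \geq l$.

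For the lift, I would use Proposition \ref{hig} to expand each $f_i = \sum_k c_{ik}\, P_N\bigl(m_1^{(i,k)}, \ldots, m_N^{(i,k)}\bigr)$ for suitable monomials $m_j^{(i,k)}$ on $x_1, \ldots, x_{N+K}$, and define
\[
f_i^{(s)} := \sum_k c_{ik}\, P_n\bigl(m_1^{(i,k)}, \ldots, m_N^{(i,k)},\, x_{N+K+1}, \ldots, x_{N+K+s}\bigr) \in W_{n,n+K}.
\]
Choose $T_{\mu^{(s)}} := T_\mu^{(s)}$, obtained from $T_\mu$ by adjoining the new boxes $N+K+1, \ldots, n+K$ in order at the right end of the first row. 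Since the extra boxes form single-box columns, $C_{T_{\mu^{(s)}}}$ coincides with $C_{T_\mu}$ inside $S_{n+K}$; meanwhile $\text{Sub}^{T_{\mu^{(s)}}}_{(x,y)}$ agrees with $\text{Sub}^{T_\mu}_{(x,y)}$ on $x_1, \ldots, x_{N+K}$ and sends each extra variable $x_{N+K+j}$ to $y_1$. A direct calculation then yields $b_{T_{\mu^{(s)}}}(y)\, f_i^{(s)} = g_i^{(s)}$, where $g_i^{(s)}$ is obtained from $g_i$ by replacing, in its expansion after substitution and antisymmetrization, each symmetric polynomial $P_N(\cdot)$ with $P_n(\cdot,\, y_1, \ldots, y_1)$ (with $s$ extra $y_1$'s); monomial by monomial this is the operation $u \mapsto u^{(s)}$ of Table \ref{tab:label_test}.

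The crux is to show that $g_i \mapsto g_i^{(s)}$ preserves linear independence once $s$ is sufficiently large. Expand each $g_i$ as a finite $\mathbb{F}$-linear combination of $y$-monomials; the padding sends each such $u$ to $u^{(s)}$, inserting $y_1^s$ inside the longest $y_1$-block $\text{C}(u)$ of $u$. Once $s$ exceeds the length of every $y_1$-block appearing across $g_1, \ldots, g_l$, the padded block in $u^{(s)}$ becomes strictly longer than every other $y_1$-block of $u^{(s)}$, so the position of $\text{C}(u)$, and hence the monomial $u$, can be recovered uniquely from $u^{(s)}$. Thus $u \mapsto u^{(s)}$ is injective on the finite monomial support of $g_1, \ldots, g_l$, and any nontrivial relation $\sum_i \alpha_i g_i^{(s)} = 0$ descends to $\sum_i \alpha_i g_i = 0$, contradicting the initial choice of the $f_i$.

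I expect the main obstacle to be rigorously establishing the identification $b_{T_{\mu^{(s)}}}(y)\, f_i^{(s)} = g_i^{(s)}$ and handling monomials whose longest $y_1$-block is not unique; this is presumably what forces the bookkeeping via the ordered partitions $\Omega_{n,m}^\lambda$ and the associated symmetric polynomials $P_O$ from Table \ref{tab:label_test}, which keep track of block positions systematically and absorb the factorial overcounts that appear when $P_n$ is evaluated at repeated arguments. The threshold $s_0$ beyond which the argument works depends on the chosen witnesses $f_i$, and hence on $N$, which accords with the statement's ``generally increasing'' conclusion rather than monotonicity from $n = N$ onward. With these points settled, combining the steps above with Lemma \ref{mult_is_dim} yields $m_{n,n+K}^{\lambda^{(n-3K)}} \geq l = m_{N,N+K}^{\lambda^{(N-3K)}}$ for all sufficiently large $n$, proving the theorem.
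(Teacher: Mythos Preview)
Your overall architecture matches the paper's: choose witnesses $P_{O_1},\ldots,P_{O_l}$ for the multiplicity at level $N$, pad via $O\mapsto O^{(s)}$ and $T_\mu\mapsto T_\mu^{(s)}$, and check that the lifted elements remain independent for large $s$; this is exactly the chain of reductions Lemma~\ref{First_reduction} $\Rightarrow$ Lemma~\ref{injection}. Your computation $b_{T_\mu^{(s)}}(y)\,f_i^{(s)}=g_i^{(s)}$ is also correct (Observation~\ref{expes}).

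The genuine gap is your third paragraph. You assert that $g_i\mapsto g_i^{(s)}$ acts ``monomial by monomial'' as $u\mapsto u^{(s)}$, and then deduce injectivity from the (correct) fact that $u\mapsto u^{(s)}$ is injective on monomials. But this identification is false: by definition $P^{(s)}=P_{N+s}(m_1,\ldots,m_N,y_1,\ldots,y_1)$ distributes the $s$ new $y_1$'s among \emph{all} gaps between the $m_j$'s (Proposition~\ref{NNM}), so a single monomial $u$ of $P$ spawns many monomials in $P^{(s)}$, and conversely $\mathrm{Coef}_{u^{(s)}}(P^{(s)})$ receives contributions from several insertion patterns, not only from $u$. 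For instance $P_2(y_1,y_2)^{(1)}=P_3(y_1,y_2,y_1)=2y_1^2y_2+2y_2y_1^2+2y_1y_2y_1$, which is not a scalar multiple of $(y_1y_2)^{(1)}+(y_2y_1)^{(1)}=y_1^2y_2+y_2y_1^2$. Hence a relation $\sum_i\alpha_i g_i^{(s)}=0$ does not automatically descend to $\sum_i\alpha_i g_i=0$, and your argument breaks here. The difficulties you anticipate in the last paragraph (non-unique longest block, factorial overcounts) are side issues compared with this.

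The paper closes this gap with Lemma~\ref{tecnical}: provided $N$ itself is taken large enough (not just $s$), every monomial $u$ of $P=\mathrm{Sub}^T_{(x,y)}(P_O)$ has a long central $y_1$-block, and then $\mathrm{Coef}_{u^{(s)}}(P^{(s)})=s!\cdot p(s)$ for some polynomial $p$ with $p(0)=\mathrm{Coef}_u(P)$. One now chooses monomials $u_1,\ldots,u_l$ so that $\det\bigl[\mathrm{Coef}_{u_i}(g_j)\bigr]\neq 0$; the determinant $\det\bigl[\tfrac{1}{s!}\mathrm{Coef}_{u_i^{(s)}}(g_j^{(s)})\bigr]$ is then a polynomial in $s$ which is nonzero at $s=0$, hence nonzero for all large $s$. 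This is what actually forces the threshold $M_K$ on $N$ and yields the ``generally increasing'' conclusion.
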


We believe, however, that this result can be considerably strengthened.
\begin{conjecture}
In  Theorem \ref{Tricky}, one can take  $N_{K}=1$  and any $n\geq 3K$.
\end{conjecture}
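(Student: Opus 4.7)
The plan is to prove the strong monotonicity statement $m_{N,N+K}^{\mu} \leq m_{n,n+K}^{\mu^{(n-N)}}$ directly for every $N \geq 1$, every $\mu \vdash N+K$, and every $n \geq N$. Specializing to $\mu = \lambda^{(N-3K)}$ for a fixed $\lambda \vdash 4K$ and any $N \geq 3K$, this recovers the conjecture in the formulation of Theorem \ref{Tricky}; in the formulation of Theorem~3 it establishes $M_K=1$ and any $n\geq N$. By Lemma \ref{mult_is_dim} combined with Corollary \ref{regev_col}, the task reduces to exhibiting, for each $s \geq 0$ and each $T_\mu \in \text{Tab}(\mu)$, an injective linear map
$$
\Phi_s \;:\; b_{T_\mu}(y)\, W_{N,N+K} \;\longrightarrow\; b_{T_{\mu^{(s)}}}(y)\, W_{N+s,N+s+K}.
$$

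The candidate for $\Phi_s$ is the polynomial-level $(s)$-operation already introduced in the notation table: for $g = \sum_u \alpha_u u$ in the $y$-variables, set $g^{(s)} := \sum_u \alpha_u u^{(s)}$, where $u^{(s)}$ is obtained from the monomial $u$ by locating the leftmost longest consecutive $y_1$-block $y_1^d \subseteq u$ and replacing it by $y_1^{d+s}$. This operation is injective on monomials: in $u^{(s)}$ the leftmost longest $y_1$-block has length $d+s$, which strictly exceeds every other $y_1$-block length present in $u^{(s)}$, so deleting $s$ copies of $y_1$ from it recovers $u$ uniquely. Consequently, distinct monomials have distinct images, and the polynomial-level extension preserves linear independence.

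To show that the image of $\Phi_s$ actually lies inside $b_{T_{\mu^{(s)}}}(y)\, W_{N+s,N+s+K}$, I plan to construct explicit preimages using the Higman--Hall description (Proposition \ref{hig}): any $f \in W_{N,N+K}$ is a linear combination of multilinearizations of polynomials $P_N(m_1,\ldots,m_N)$ with $\sum_i \deg(m_i)=N+K$, and to each such generator I attach the lift
$$
\text{Lin}_{N+s+K}\bigl(P_{N+s}(m_1,\ldots,m_N,\, z_1,\ldots,z_s)\bigr) \;\in\; W_{N+s,N+s+K},
$$
where $z_1,\ldots,z_s$ are $s$ fresh variables intended to fill the new boxes $N+K+1,\ldots,N+K+s$ appearing in the first row of $T_{\mu^{(s)}}$. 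Since $P_{N+s}(\cdots)\in (x^{N+s})^T$, the lift indeed lies in $V_{N+s+K}\cap (x^{N+s})^T = W_{N+s,N+s+K}$. Extending linearly produces, for every $f\in W_{N,N+K}$, a lift $f^{(s)} \in W_{N+s,N+s+K}$.

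The central identity I need to verify — and the main technical obstacle — is
$$
b_{T_{\mu^{(s)}}}(y)\, f^{(s)} \;=\; c_s \cdot \bigl(b_{T_\mu}(y)\, f\bigr)^{\!(s)}
$$
for some nonzero universal constant $c_s$ (conceivably $c_s = s!$, absorbing the symmetrisation of the $s$ auxiliary arguments of $P_{N+s}$). The column subgroup $C_{T_{\mu^{(s)}}}$ coincides with $C_{T_\mu}$ as an abstract subgroup, since the $s$ added boxes form singleton columns, so by Proposition \ref{inter} the left-hand side unfolds into a sum of $C_{T_\mu}$-antisymmetrized substitutions of $f^{(s)}$. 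Tracking how the $s$ copies of $y_1$ contributed by the substituted $z_1,\ldots,z_s$ interleave with the existing $y_1$-pattern in $f(y_{i_{\tau(1)}},\ldots,y_{i_{\tau(N+K)}})$, and collecting equal terms via the symmetry of $P_{N+s}$, should reproduce precisely the monomial-level "extend the longest $y_1$-block" operation. Once this identity is established, the injectivity of the monomial $(s)$-operation instantly yields the injectivity of $\Phi_s$ and pins its image inside $b_{T_{\mu^{(s)}}}(y)\, W_{N+s,N+s+K}$, completing the argument. The most delicate aspects of this identity are (i) the tie-breaking convention when a monomial has several maximal $y_1$-blocks — the leftmost rule must be compatible with the construction — and (ii) independence from the particular Higman--Hall presentation chosen for $f$, which plausibly requires routing the argument through the $GL_k$-equivariant picture of Proposition \ref{omomorphic} and Theorem \ref{Connection_Sn_Glk}, where the lifting corresponds to the clean assignment $z_1 \otimes_s \cdots \otimes_s z_N \mapsto z_1 \otimes_s \cdots \otimes_s z_N \otimes_s x_1^{\otimes_s s}$.
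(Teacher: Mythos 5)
First, note that the statement you are proving is stated in the paper as a \emph{conjecture}: the paper proves only the weaker Theorem \ref{Tricky} (some $N_{K}$, and only $n$ large enough) and explicitly leaves this strengthening open, so your argument has to stand entirely on its own. It does not, because the one step you defer --- the central identity $b_{T_{\mu^{(s)}}}(y)\,f^{(s)}=c_{s}\,\bigl(b_{T_{\mu}}(y)f\bigr)^{(s)}$ --- is not merely delicate, it is false in the regime the conjecture is about. The monomial-level operation $u\mapsto u^{(s)}$ lengthens the leftmost longest $y_{1}$-block of $u$, but after the substitution $\text{Sub}_{\left(\text{x},\text{y}\right)}^{T}$ that block can sit \emph{strictly inside} one of the substituted parts $m_{i}=\text{Sub}_{\left(\text{x},\text{y}\right)}^{T}(x_{A_{i}})$, whereas the $s$ new $y_{1}$'s coming from your lift are only ever inserted \emph{between} the arguments of $P_{n+s}$ and therefore can never lengthen such a block. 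The paper's own example after Lemma \ref{tec_oftec} exhibits exactly this: for $P=P_{2}\left(y_{2}y_{1}y_{2},y_{2}\right)$ and $u=y_{2}y_{1}y_{2}^{2}$ one has $\text{Coef}_{u}\left(P\right)\neq0$ but $\text{Coef}_{u^{(s)}}\left(P^{(s)}\right)=0$ for every $s\geq1$, since $u^{(s)}=y_{2}y_{1}^{1+s}y_{2}^{2}$ cannot arise from inserting $y_{1}$'s between the arguments. Such parts genuinely occur in your setting for small first rows (e.g.\ $K=2$, shape $\left(4,4\right)$, part $[\![5\,1\,6]\!]$ mapping to $y_{2}y_{1}y_{2}$), and antisymmetrizing over $C_{T_{\mu}}$ does not remove the problem. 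This obstruction is precisely why the paper imposes $\left|\text{C}\left(u\right)\right|>qr$ in Lemma \ref{tec_oftec} and forces $n$ and $s$ large via Proposition \ref{d_large}; your proposal removes those hypotheses without replacing them by anything.

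Two further points confirm that the approach, even repaired, would not reach the conjecture. First, even where the long-block hypothesis holds, the paper's count shows $\text{Coef}_{u^{(s)}}$ equals $s!$ times a polynomial in $s$ whose degree and coefficients depend on $u$ and on the chosen presentation; it is not $c_{s}\cdot\text{Coef}_{u}$ for a universal $c_{s}$. One is then forced into the determinant argument of the proof of Lemma \ref{injection}, which only gives linear independence for all sufficiently large $s$ --- i.e.\ exactly Theorem \ref{Tricky}, not $N_{K}=1$ and all $n\geq3K$. Second, your map $\Phi_{s}$ is defined through a choice of Higman--Hall presentation of $f$ (Proposition \ref{hig}); since the generators $P_{O}$ are in general linearly dependent, well-definedness of $\Phi_{s}$ on $b_{T_{\mu}}(y)W_{N,N+K}$ is itself unproved (the paper's remark preceding Observation \ref{expes} shows the presentation-level $(\cdot)^{(s)}$ is not well defined), and the suggested fix via $z_{1}\otimes_{s}\cdots\otimes_{s}z_{N}\mapsto z_{1}\otimes_{s}\cdots\otimes_{s}z_{N}\otimes_{s}x_{1}^{\otimes_{s}s}$ is not a $GL_{k}\left(\mathbb{F}\right)$-equivariant map, since $x_{1}$ is not invariant. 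So there is a genuine gap: the image-containment identity at the heart of the proposal fails for small $n$, and the remaining machinery reproduces only what the paper already proves.
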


The majority of Section 4  is devoted to proving Theorem \ref{Tricky}. However, assuming Theorem \ref{Tricky}, we can prove now the main result of this paper. 
\begin{theorem}
\label{stab}
Fix $K\in\mathbb{N}$. There exists $N\in\mathbb{N}$ such that for
every $n\geq N$, $W_{n,n+K}$ is derived (as an
$S_{n+K}$-representation) from $W_{N,N+K}$. In other words, if $W_{N,N+K}$ decomposes as \linebreak $W_{N,N+K}\cong\underset{\lambda\vdash N+K}{\bigoplus}m_{N,N+K}^{\lambda}S^{\lambda}$, then for every $n\geq N$ 
\begin{gather*}
W_{n,n+K}\cong\underset{\lambda\vdash N+K}{\bigoplus}m_{N,N+K}^{\lambda}S^{\lambda^{\left(n-N\right)}}.
\end{gather*}
\end{theorem}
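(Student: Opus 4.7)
The plan is to combine Theorem \ref{upper_W} with Theorem \ref{Tricky} and the elementary observation that a bounded, generally increasing, integer-valued sequence is eventually constant. The key structural input is Theorem \ref{upper_W}(2), which asserts that for $n \geq 3K$ every irreducible summand of $W_{n,n+K}$ has the form $S^{\lambda^{(n-3K)}}$ for some $\lambda \vdash 4K$. Thus the entire decomposition for $n \geq 3K$ is controlled by the finite family of integer sequences
$$a^{\lambda}_{n} := m^{\lambda^{(n-3K)}}_{n,n+K}, \qquad \lambda \vdash 4K,$$
and the problem reduces to showing that each of these (finitely many) sequences is eventually constant.

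For each fixed $\lambda \vdash 4K$, Theorem \ref{Tricky} tells me that $(a^{\lambda}_{n})_{n \geq N_K}$ is generally increasing, while Theorem \ref{upper_W}(3) yields a uniform bound $\sum_{\lambda \vdash 4K} a^{\lambda}_{n} \leq D$, so in particular each $a^{\lambda}_{n}$ is a non-negative integer bounded by $D$. Next I would observe that a bounded, generally increasing, integer-valued sequence must stabilize: by Observation \ref{convergent} it converges, and a convergent integer sequence is eventually equal to its limit. Applying this conclusion to each of the finitely many sequences $a^{\lambda}$ and taking the maximum of their stabilization thresholds (together with $3K$ and $N_K$), I obtain a single integer $N$ such that $a^{\lambda}_{n} = a^{\lambda}_{N}$ for every $n \geq N$ and every $\lambda \vdash 4K$.

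It then remains to identify this stabilization with the ``derived'' conclusion of Definition \ref{derived_def}. By Theorem \ref{upper_W}(2), the only $\mu \vdash N+K$ with $m^{\mu}_{N,N+K} \neq 0$ are those of the form $\mu = \lambda^{(N-3K)}$ for $\lambda \vdash 4K$, and the arithmetic of the $\lambda \mapsto \lambda^{(d)}$ operation gives $(\lambda^{(N-3K)})^{(n-N)} = \lambda^{(n-3K)}$. Setting $m^{\mu} := m^{\mu}_{N,N+K}$, this yields for every $n \geq N$
$$W_{n,n+K} \cong \bigoplus_{\lambda \vdash 4K} a^{\lambda}_{N} \, S^{\lambda^{(n-3K)}} \cong \bigoplus_{\mu \vdash N+K} m^{\mu} \, S^{\mu^{(n-N)}},$$
which is precisely the statement that $W_{n,n+K}$ is derived from $W_{N,N+K}$.

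The main obstacle of the whole theorem is of course packaged into Theorems \ref{upper_W} and \ref{Tricky}; given these, the remaining argument is essentially the elementary principle that a bounded generally increasing integer sequence stabilizes, plus bookkeeping about the operation $\lambda \mapsto \lambda^{(d)}$. The one subtle point to verify carefully is that the parameterization $\lambda \mapsto \lambda^{(N-3K)}$ is injective on partitions of $4K$ (so that the multiplicities in the two displayed decompositions of $W_{N,N+K}$ genuinely match up), which is immediate from the fact that $\lambda$ can be recovered from $\lambda^{(d)}$ by deleting the appropriate number of boxes from the first row.
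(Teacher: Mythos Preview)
Your proposal is correct and follows essentially the same approach as the paper: reduce via Theorem \ref{upper_W}(2) to the finitely many sequences $a^{\lambda}_{n}=m_{n,n+K}^{\lambda^{(n-3K)}}$ for $\lambda\vdash 4K$, use Theorem \ref{Tricky} and Theorem \ref{upper_W}(3) to see each is generally increasing and bounded, apply Observation \ref{convergent} plus integrality to get eventual constancy, and then take the maximum threshold. Your write-up is in fact slightly more explicit than the paper's in spelling out the final identification with Definition \ref{derived_def} (the arithmetic $(\lambda^{(N-3K)})^{(n-N)}=\lambda^{(n-3K)}$ and the injectivity of $\lambda\mapsto\lambda^{(N-3K)}$), but the underlying argument is the same.
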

\begin{proof}
By part \textbf{(2)} of Theorem \ref{upper_W}, for every $n\geq3K$, if $m_{n,n+K}^{\lambda}\neq0$,
then $\lambda$ is derived from some partition $\mu\vdash4K$. It follows that
for every $n$, we should focus only on such partitions, i.e., partitions from the finite set
\begin{align}
  \left\{ \lambda\vdash n+K\mid\exists\mu\vdash4K\,\,\,\,\,\lambda\text{ is derived from \ensuremath{\mu}}\right\} 
 & =\left\{ \ensuremath{\mu}^{\left(n-3K\right)}:\mu\vdash4K\right\} .
\label{R5}
\end{align}

Let $\mu$ be a partition of $4K$. By part \textbf{(3)} of Theorem \ref{upper_W}, there exists $D\in\mathbb{N}$
such that for every $n$ we have $\underset{\lambda\vdash n+K}{\sum}m_{n,n+K}^{\lambda}\leq D$.
In particular, $m_{n,n+K}^{\mu^{\left(n-3K\right)}}\leq D$. Since
by Theorem \ref{Tricky} the sequence $\left(m_{n,n+K}^{\lambda^{\left(n-3K\right)}}\right)_{n=M_{K}}^{\infty}$
is generally increasing (for some $M_{K}\in\mathbb{N}$), it is convergent
by Observation \ref{convergent}, forcing it to be eventually a constant sequence (being a sequence of integers).
As the set in (\ref{R5}) is finite for every $n$, for  a large enough $n$ all the multiplicities
must stabilize, and the theorem follows.
\end{proof}
\section{Proof of Theorem \ref{Tricky}}
In this section, we prove Theorem \ref{Tricky} in four steps.

In Step 1, we describe the vector space $W_{n,m}$ in terms
of the polynomial $P_{n}$, obtaining a first reduction (Lemma \ref{First_reduction}).
In step 2, we give a further reduction  (Lemma \ref{injection}), where we utilize
Corollary \ref{regev_col} from Section 2.
This reduction is settled by Lemma \ref{tecnical}  in step 3. The proof of Lemma \ref{tecnical} is  somewhat technical and will be presented in step 4.

\subsection{Step 1}

In order to state Proposition \ref{hig} in terms of $W_{n,m}$,
some definitions are required.

\begin{definition}
\label{parpar}
An \textbf{ordered partition} of $\left[m\right]=\left\{ 1,...,m\right\} $
is a set of the form $O=\left\{ A_{1},...,A_{n}\right\} $, where
each $A_{i}$ is an ordered list of numbers, such that (as sets) $A_{i}\cap A_{j}=\emptyset$
for $i\neq j$, and $A_{1}\cup...\cup A_{n}=\left[m\right]$. To avoid confusion with the notation of a permutation, we shall write  $A_{i}=[\![ab...c]\!]$
instead of $A_{i}=\left(a,b,...,c\right)$. The elements $A_{1},...,A_{n}$ are called the \textbf{parts} of $O$. The
set of all ordered partition of $\left[m\right]$ consisting of $n$
parts is denoted by $\Omega_{n,m}$. 

For a given $\lambda=\left(\lambda_{1},\lambda_{2},...,\lambda_{n}\right)\vdash m$,
we denote by $\Omega_{n,m}^{\lambda}$ the subset of $\Omega_{n,m}$ consisting of all the ordered partitions
$O=\left\{ A_{1},...,A_{n}\right\} $ such that 
\begin{gather*}
\left|A_{1}\right|=\lambda_{1},\left|A_{2}\right|=\lambda_{2},...,\left|A_{n}\right|=\lambda_{n}.
\end{gather*}
\end{definition}
\begin{definition}
 Given a list of natural numbers $A=[\![ab...c]\!] $,
we denote by $x_{A}$ the monomial $x_{A}:=x_{a}x_{b}\cdot\cdot\cdot x_{c}\in\mathbb{F}\left\langle x_{1},x_{2},....\right\rangle$.
For every ordered partition $O=\left\{ A_{1},...,A_{n}\right\} \in\Omega_{n,m}$,
we define the following element of $V_{m}$: 
\begin{gather*}
P_{O}:=P_{n}\left(x_{A_{1}},...,x_{A_{n}}\right)=\underset{\sigma\in S_{n}}{\sum}x_{A_{\sigma\left(1\right)}}\cdot\cdot\cdot x_{A_{\sigma\left(n\right)}}.
\end{gather*}
\end{definition}

\begin{example}
\label{Exa3}
The
elements of $\Omega_{2,3}$ are
\begin{gather*}
\left\{ [\![12]\!],[\![3]\!]\right\} ,\left\{ [\![13]\!],[\![2]\!]\right\} ,\left\{ [\![23]\!],[\![1]\!]\right\} \\
\left\{ [\![21]\!],[\![3]\!]\right\} ,\left\{ [\![31]\!],[\![2]\!]\right\} ,\left\{ [\![32]\!],[\![1]\!]\right\} .
\end{gather*}
\end{example}

\begin{example}
If $O=\left\{ [\![12]\!],[\![3]\!]\right\} $, then
$P_{O}=P_{2}\left(x_{1}x_{2},x_{3}\right)=x_{1}x_{2}x_{3}+x_{3}x_{1}x_{2}$.
\end{example}

One should observe  that  $P_{n}\left(m_{1},...,m_{n}\right)\in\mathbb{F}\left\langle X\right\rangle $
lies in $V_{m}$ if and only if $P_{n}\left(m_{1},...,m_{n}\right)=P_{O}$
for some $O\in\Omega_{n,m}$. Hence, Proposition \ref{hig} yields the following,
more explicit description of $W_{n,m}$.

\begin{corollary}
\label{Deacrip}
We have $W_{n,m}=\text{span}_{\mathbb{F}}\left\{ P_{O}:O\in\Omega_{n,m}\right\} $.
\end{corollary}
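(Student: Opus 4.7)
I would prove the two inclusions of the claimed equality separately.

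For $(\supseteq)$, each $P_{O}$ lies in $(x^{n})^{T}$ by Proposition~\ref{hig} (being a value of $P_{n}$ on monomials), and lies in $V_{m}$ because, by Definition~\ref{parpar}, the lists $A_{1},\dots,A_{n}$ partition $[m]$, so every monomial $x_{A_{\sigma(1)}}\cdots x_{A_{\sigma(n)}}$ uses each of $x_{1},\dots,x_{m}$ exactly once.

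For $(\subseteq)$, take an arbitrary $f\in W_{n,m}$ and use Proposition~\ref{hig} to write $f=\sum_{i}c_{i}P_{n}(m_{1}^{(i)},\dots,m_{n}^{(i)})$ for scalars $c_{i}$ and monomials $m_{j}^{(i)}\in\mathbb{F}\langle X\rangle$. The plan is to restrict this sum to those summands that already lie in $V_{m}$, which by the observation preceding the corollary are exactly the $P_{O}$'s. This restriction is justified by the multi-grading of $\mathbb{F}\langle X\rangle$ indexed by the degree vector $(\deg_{x_{1}},\deg_{x_{2}},\dots)$. Each summand $P_{n}(m_{1}^{(i)},\dots,m_{n}^{(i)})=\sum_{\sigma\in S_{n}}m_{\sigma(1)}^{(i)}\cdots m_{\sigma(n)}^{(i)}$ is multi-homogeneous, since all of its monomials reuse the same multiset of variables as the concatenation $m_{1}^{(i)}\cdots m_{n}^{(i)}$; hence $\{P_{n}(m_{1},\dots,m_{n})\}$ is a spanning set of $(x^{n})^{T}$ consisting of multi-homogeneous elements. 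The intersection of such a spanning set with a fixed multi-homogeneous subspace, namely $V_{m}$ (the component of multi-degree $(1,\dots,1,0,0,\dots)$ with $m$ ones), is then spanned by those generators which themselves lie in $V_{m}$, i.e.\ by the $P_{O}$'s.

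There is no real obstacle in this argument. The only non-trivial observation is that each spanning element $P_{n}(m_{1},\dots,m_{n})$ is multi-homogeneous in $\mathbb{F}\langle X\rangle$, which is immediate from the definition of $P_{n}$ as a sum over reorderings of a single block structure, and which is exactly what legitimizes dropping all the summands not lying in $V_{m}$.
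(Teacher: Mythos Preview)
Your proof is correct and follows the same approach as the paper: the paper simply observes that $P_{n}(m_{1},\dots,m_{n})\in V_{m}$ if and only if it equals some $P_{O}$, and then states the corollary as an immediate consequence of Proposition~\ref{hig}. Your write-up makes explicit the multi-grading argument (each $P_{n}(m_{1},\dots,m_{n})$ is multi-homogeneous, and $V_{m}$ is a single multi-homogeneous component, so projecting the expansion from Proposition~\ref{hig} onto that component kills every summand not already in $V_{m}$), which is precisely the justification the paper leaves to the reader.
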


\begin{example}
Since $O=\left\{ [\![1]\!],[\![2]\!]\right\}$ is the only element in $\Omega_{2,2}$, we obtain Example \ref{once_ag} directly:
\begin{gather*}
W_{2,2}=\text{span}_{\mathbb{F}}\left\{ P_{O}:O\in\Omega_{2,2}\right\} =\text{span}_{\mathbb{F}}\left\{ P_{2}\left(x_{1},x_{2}\right)\right\} =\text{span}_{\mathbb{F}}\left\{ x_{1}x_{2}+x_{2}x_{1}\right\} .
\end{gather*}
\end{example}
\begin{example}
By Example \ref{Exa3}, we obtain that $W_{2,3}$ is spanned
by the following six elements: 
\begin{gather*}
P_{2}\left(x_{1}x_{2},x_{3}\right),P_{2}\left(x_{1}x_{3},x_{2}\right),P_{2}\left(x_{2}x_{3},x_{1}\right)\\
P_{2}\left(x_{2}x_{1},x_{3}\right),P_{2}\left(x_{3}x_{1},x_{2}\right),P_{2}\left(x_{3}x_{2},x_{1}\right).
\end{gather*}
\end{example}

\begin{remark}
Note that $S_{m}$ acts naturally on $\Omega_{n,m}$ via 
\begin{gather*}
\sigma.\left\{ [\![a...b]\!],[\![c...d]\!],....\right\} =\left\{ \left[\sigma\left(a\right)...\sigma\left(b\right)\right],\left[\sigma\left(c\right)...\sigma\left(d\right)\right],....\right\}, 
\end{gather*}

and one can verify that the usual action of $S_{m}$ on
$W_{n,m}$ is induced by this action.
In terms of representation theory, this translates into saying that $W_{n,m}$
 is a homomorphic image of the permutation representation $\mathbb{F}\left[\Omega_{n,m}\right]$,
via the $\mathbb{F}\left[S_{m}\right]$-homomorphism 
\begin{flalign*}
\psi & :\mathbb{F}\left[\Omega_{n,m}\right]\rightarrow W_{n,m}\\
\psi & \left(O\right)=P_{O}.
\end{flalign*}

Hence, it would seem natural to use the characters of $\mathbb{F}\left[\Omega_{n,m}\right]$
 for obtaining some information about the characters of $W_{n,m}$.
However, we are not aware of any "nice" description of the characters of $\mathbb{F}\left[\Omega_{n,m}\right]$.
In fact, using the notations of Definition \ref{parpar}, the well-known Foulkes's conjecture
(see, e.g., \cite{dent2000conjecture} or {\cite[p.227]{james2006representation}}) concerns the decomposition of $\mathbb{F}\left[\Omega_{a,ab}^{\left(b,...,b\right)}\right]$.
\end{remark}

\begin{definition}
Given an ordered set $O=\left\{ A_{1},...,A_{n}\right\} \in\Omega_{n,m}$,
we denote by $O^{\left(s\right)}\in\Omega_{n+s,m+s}$ the ordered
set 
\begin{gather*}
O^{\left(s\right)}=\left\{ A_{1},...,A_{n},[\![m+1]\!],...,[\![m+s]\!]\right\} .
\end{gather*}
Note that if $P_{O}=P_{O}\left(x_{1},...,x_{m}\right)\in W_{n,m}$, 
then $P_{O^{\left(s\right)}}=P_{O^{\left(s\right)}}\left(x_{1},...,x_{m},x_{m+1},...,x_{m+s}\right)\in W_{n+s,m+s}$.
\end{definition}
\begin{example}
If $O=\left\{ [\![12]\!],[\![3]\!]\right\} $, then $O^{\left(2\right)}=\left\{ [\![12]\!],[\![3]\!],[\![4]\!],[\![5]\!]\right\} $,
and 
\begin{flalign*}
P_{O} & =P_{2}\left(x_{1}x_{2},x_{3}\right)\\
P_{O^{\left(2\right)}} & =P_{4}\left(x_{1}x_{2},x_{3},x_{4},x_{5}\right).
\end{flalign*}
\end{example}

\begin{definition}
Let $\lambda=\left(a_{1},...,a_{r}\right)\vdash m$ and $T\in\text{Tab}\left(\lambda\right)$.
We denote by $T^{\left(s\right)}\in\text{Tab}\left(\lambda^{\left(s\right)}\right)$
the tableau obtained from $T$ by adjoining the boxes  ${  \begin{ytableau} \scriptstyle m+1 & ... & \scriptstyle m+k   \end{ytableau}} $.

\end{definition}

\begin{example}
 If $  T={\tiny \ \begin{ytableau} 1 & 2  \\ 3   \end{ytableau}} \in\text{Tab}\left(\left(2,1\right)\right)$,
then $  T^{\left(3\right)}={\tiny \ \begin{ytableau} 1 & 2 & 4 & 5 & 6  \\ 3   \end{ytableau}}\in\text{Tab}\left(\left(5,1\right)\right)$.
\end{example}

We are now ready to make a first reduction towards proving Theorem \ref{Tricky}. 

\begin{lemma}
\label{First_reduction}
Fix $K\in\mathbb{N}$ and $\lambda\vdash4K$. There exists an integer
$3K\leq M_{K}=N$ such that, for every $T\in\text{Tab}\left(\lambda^{\left(N-3K\right)}\right)$,
if $e_{T}P_{O_{1}},...,e_{T}P_{O_{l}}$ are linearly independent for
some $P_{O_{1}},...,P_{O_{l}}\in W_{N,N+K}$, then for all $n\geq N$
large enough,
\begin{gather*}
e_{T^{\left(n-N\right)}}P_{O_{1}^{\left(n-N\right)}},...,e_{T^{\left(n-N\right)}}P_{O_{l}^{\left(n-N\right)}}\in W_{n,n+K}
\end{gather*}
are also linearly independent.
\end{lemma}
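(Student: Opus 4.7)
The plan is to apply Corollary \ref{regev_col} to convert both linear-independence hypotheses into statements about polynomials in $\mathbb{F}\left\langle Y\right\rangle$. Setting $g_i := b_T(y) P_{O_i}$ and $G_i := b_{T^{(n-N)}}(y) P_{O_i^{(n-N)}}$, the corollary makes linear independence of $\{e_T P_{O_i}\}_{i=1}^{l}$ equivalent to that of $\{g_i\}_{i=1}^{l}$, and analogously for the family $\{G_i\}_{i=1}^{l}$. So it suffices to prove: whenever $g_1, \ldots, g_l$ are linearly independent, the polynomials $G_1, \ldots, G_l$ are linearly independent for all sufficiently large $n \geq N$.

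The next step is to write $G_i$ explicitly in terms of $g_i$. Since $T^{(n-N)}$ is obtained from $T$ by adjoining $n - N$ new boxes $N+K+1, \ldots, n+K$, each forming a new column of length $1$ in the first row, two observations are immediate: $(a)$ the column subgroup $C_{T^{(n-N)}}$ coincides with $C_T$, viewed as a subgroup of $S_{n+K}$ fixing $\{N+K+1, \ldots, n+K\}$; and $(b)$ under $\text{Sub}_{(x,y)}^{T^{(n-N)}}$ each new variable $x_{N+K+i}$ is sent to $y_1$, while the images of $x_1, \ldots, x_{N+K}$ agree with those under $\text{Sub}_{(x,y)}^T$. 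Together with Proposition \ref{inter} and the definition of $P_{O_i^{(n-N)}}$, these yield
\begin{gather*}
g_i = \sum_{\tau \in C_T} \text{sgn}(\tau)\, P_N(m_1^\tau, \ldots, m_N^\tau), \qquad G_i = \sum_{\tau \in C_T} \text{sgn}(\tau)\, P_n(m_1^\tau, \ldots, m_N^\tau,\, y_1, \ldots, y_1),
\end{gather*}
where $m_j^\tau$ denotes the image of $x_{A_j}$ under the tableau substitution composed with $\tau$, and the final slot of $P_n$ carries $n - N$ copies of $y_1$. In the notation of Table \ref{tab:label_test}, this exhibits $G_i$ as the result of applying the insertion $P \mapsto P^{(n-N)}$ summand-by-summand to $g_i$.

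The lemma is then reduced to a combinatorial claim: the $y_1$-insertion map $g_i \mapsto G_i$ is injective on $\text{span}\{g_1, \ldots, g_l\}$ once $n - N$ is large enough. This is the content of Lemma \ref{tecnical}, whose proof is the substance of Steps 3 and 4 and relies on analysing the monomial-level insertion $u \mapsto u^{(s)}$. Because the set of partitions of $4K$ and the set $\Omega_{N,N+K}$ are finite, a single threshold $N = M_K \geq 3K$ depending only on $K$ suffices to enforce the quantitative hypothesis of Lemma \ref{tecnical} uniformly across all $T$ and all admissible tuples $(O_1, \ldots, O_l)$.

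The hardest part is not this reduction, which is essentially bookkeeping exploiting the fortunate fact that all new boxes lie in the first row of $T^{(n-N)}$ --- this is what makes $C_{T^{(n-N)}} = C_T$ and forces the new variables to be sent to $y_1$. The real difficulty lies in Lemma \ref{tecnical}: one must track how distinct monomials of the $g_i$'s contribute, with heavily overlapping supports, to the $G_i$'s after insertion, and rule out accidental cancellations that would produce an unexpected linear dependence.
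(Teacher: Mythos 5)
Your proposal follows essentially the same route as the paper: Corollary \ref{regev_col} to pass to the $b_T(y)$-side, the observation that adjoining first-row boxes leaves $C_T$ unchanged and sends the new variables to $y_1$ (the paper's Observation \ref{expes}), and then injectivity of the insertion map $(\cdot)^{(s)}$ on the relevant span for large $s$. The only cosmetic difference is that you cite Lemma \ref{tecnical} directly for that injectivity, whereas the paper interposes Lemma \ref{injection} (a determinant-of-polynomial-coefficients argument deduced from Lemma \ref{tecnical}); the substance is the same.
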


\begin{claim}
\label{claim}
Lemma \ref{First_reduction} implies Theorem \ref{Tricky}.
\end{claim}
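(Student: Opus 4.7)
The plan is to translate the multiplicity statement of Theorem \ref{Tricky} into the linear-independence statement of Lemma \ref{First_reduction}, using Lemma \ref{mult_is_dim} as the bridge. Concretely, I would set $a_n := m_{n,n+K}^{\lambda^{(n-3K)}}$, let $N := M_K$ be the threshold provided by Lemma \ref{First_reduction}, and aim to show that $(a_n)_{n\ge N}$ is generally increasing: for every $d\ge N$, $a_d \le a_n$ for all sufficiently large $n$.

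Fix such a $d$ and pick any $T_d\in\text{Tab}\bigl(\lambda^{(d-3K)}\bigr)$. By Lemma \ref{mult_is_dim}, $a_d=\text{dim}\bigl(e_{T_d}W_{d,d+K}\bigr)$, and by Corollary \ref{Deacrip} the space $W_{d,d+K}$ is spanned by $\{P_O:O\in\Omega_{d,d+K}\}$. I would therefore extract $O_1,\dots,O_{a_d}\in\Omega_{d,d+K}$ whose images $e_{T_d}P_{O_1},\dots,e_{T_d}P_{O_{a_d}}$ form a basis of $e_{T_d}W_{d,d+K}$; these are in particular linearly independent. Invoking Lemma \ref{First_reduction} with $d$ playing the role of $N$ (legitimate since $d\ge M_K$), for all $n\ge d$ large enough the lifted elements $e_{T_d^{(n-d)}}P_{O_1^{(n-d)}},\dots,e_{T_d^{(n-d)}}P_{O_{a_d}^{(n-d)}}\in W_{n,n+K}$ remain linearly independent.

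Next, a direct bookkeeping of shapes shows that $T_d^{(n-d)}$ has shape $\lambda^{(n-3K)}$: extending a tableau of shape $\lambda^{(d-3K)}$ by $n-d$ boxes in the first row produces a first row of length $\lambda_1+(d-3K)+(n-d)=\lambda_1+(n-3K)$. Consequently, Lemma \ref{mult_is_dim} applied once more yields $a_n=\text{dim}\bigl(e_{T_d^{(n-d)}}W_{n,n+K}\bigr)\ge a_d$, which is the required inequality; since $d\ge N$ was arbitrary, the sequence is generally increasing and Theorem \ref{Tricky} follows.

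The argument is a purely formal translation between two equivalent formulations, so there is no real obstacle in this claim itself: the content is just (i) the dimension-interpretation of multiplicities from Lemma \ref{mult_is_dim}, (ii) the spanning description $W_{n,n+K}=\text{span}\{P_O\}$ from Corollary \ref{Deacrip}, and (iii) the shape matching between the derived partition $\lambda^{(n-3K)}$ and the tableau extension $T_d\mapsto T_d^{(n-d)}$. The genuine difficulty of the theorem is entirely packaged into Lemma \ref{First_reduction}, whose proof occupies Steps 2--4 of Section 4.
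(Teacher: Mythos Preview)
Your proof is correct and follows the same approach as the paper: translate the multiplicity into $\dim(e_T W)$ via Lemma~\ref{mult_is_dim}, extract ordered partitions $O_i$ spanning via Corollary~\ref{Deacrip}, and push forward with Lemma~\ref{First_reduction}. You are in fact more careful than the paper in one respect: you explicitly quantify over every index $d\ge N$ as required by the definition of ``generally increasing'', whereas the paper writes out only the base case $d=N$ and leaves the rest implicit.

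One small caveat worth recording: when you invoke Lemma~\ref{First_reduction} ``with $d$ playing the role of $N$'', you are using that its conclusion holds for \emph{every} $N\ge M_K$, not just for the single $N=M_K$ appearing in the lemma's statement. This is indeed what the proof of Lemma~\ref{First_reduction} (via Lemma~\ref{injection}, which is explicitly stated for all $N\ge M_K$) actually delivers, so the step is legitimate; but strictly speaking it goes a hair beyond the lemma as phrased. The paper's own proof has the same implicit reliance. Your shape computation $T_d^{(n-d)}\in\text{Tab}(\lambda^{(n-3K)})$ is correct and in fact cleaner than the paper's, which writes $\text{Tab}(\lambda^{(n-N)})$ at that point.
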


\begin{proof}
Fix $K\in\mathbb{N}$ and $\lambda\vdash4K$, and let $M_{K}=N$ be
the integer provided by Lemma \ref{First_reduction}. Suppose that $l=m_{N,N+K}^{\lambda^{\left(N-3K\right)}}$ is the multiplicity of
$S^{\lambda^{\left(N-3K\right)}}$ in $W_{N,N+K}$. Lemma $\ref{mult_is_dim}$
tells us that for any $T\in\text{Tab}\left(\lambda^{\left(N-3K\right)}\right)$,
we have $l=\text{dim}\left(e_{T}W_{N,N+K}\right)$.
Using the description of $W_{n,m}$ from Corollary \ref{Deacrip}, we obtain the existence of $l$
ordered partitions $O_{1},...,O_{l}\in\Omega_{N,N+K}$ and \linebreak $T\in\text{Tab}\left(\lambda^{\left(N-3K\right)}\right)$
such that 
\begin{gather*}
e_{T}P_{O_{1}},...,e_{T}P_{O_{l}}
\end{gather*}
are linearly independent. Assuming Lemma \ref{First_reduction}, we obtain that for all $n\geq N$
large enough  
\begin{gather*}
e_{T^{\left(n-N\right)}}P_{O_{1}^{\left(n-N\right)}},...,e_{T^{\left(n-N\right)}}P_{O_{l}^{\left(n-N\right)}}\in e_{T^{\left(n-N\right)}}W_{n,n+K}
\end{gather*}
are also linearly independent. Note that $T^{\left(n-N\right)}\in\text{Tab}\left(\lambda^{\left(n-N\right)}\right)$,
and thus (using Corollary \ref{Deacrip} and Lemma $\ref{mult_is_dim}$ once again), the multiplicity of $S^{\lambda^{\left(n-N\right)}}$ in $W_{n,n+K}$
is at least $l$.

\end{proof}

\subsection{Step 2 - Proof of Lemma \ref{First_reduction}}

To prove Lemma \ref{First_reduction}, let us make another reduction.
Recall from Definition \ref{Regevsub} that the substitution $\text{Sub}_{\left(\text{x},\text{y}\right)}:\mathbb{F}\left\langle x\right\rangle \rightarrow\mathbb{F}\left\langle Y\right\rangle $
is defined by $x_{j}\mapsto y_{j}$, where we set $f\left(\text{y}\right)=\text{Sub}_{\left(\text{x},\text{y}\right)}\left(f\left(\text{x}\right)\right)$
for each $f\in\mathbb{F}\left\langle X\right\rangle $. We also remind the reader
that any tableau $T$ induces a substitution
map $\text{Sub}_{\left(\text{x},\text{y}\right)}^{T}:x_{j}\mapsto y_{i_{j}}$,
where $i_{j}$ is the number of the row for which $j$ appears in $T$. 

\begin{definition}
Let $P=P_{n}\left(m_{1},...,m_{n}\right)$, where $\left\{ m_{i}\right\} $
are monomials in $\mathbb{F}\left\langle Y\right\rangle $. Given $s\in\mathbb{N}$, we define
\begin{gather*}
P^{\left(s\right)}=P_{n+s}(m_{1},...,m_{n},\stackrel{s\text{ times}}{\overbrace{y_{1},...,y_{1}}}).
\end{gather*}
We extend this notation linearly: if $P_{1}=P_{n}\left(m_{11},...,m_{n1}\right),...,P_{r}=P_{n}\left(m_{1r},...,m_{nr}\right)$,
then for a linear combination $f=\stackrel[i=1]{r}{\sum}\alpha_{i}P_{i}$,
we set $f^{\left(s\right)}:=\stackrel[i=1]{r}{\sum}\alpha_{i}P_{i}^{\left(s\right)}$.
\end{definition}

\begin{remark}
Note that the map sending $f\mapsto f^{\left(s\right)}$
is usually not linear. For instance, taking 
\begin{gather*}
f_{1}=P_{2}\left(y_{1}y_{1},y_{2}y_{2}\right),f_{2}=P_{2}\left(y_{1}y_{2},y_{2}y_{1}\right)\\
g_{1}=P_{2}\left(y_{1},y_{1}y_{2}y_{2}\right),g_{2}=P_{2}\left(y_{2},y_{2}y_{1}y_{1}\right),
\end{gather*}

then $f_{1}+f_{2}=y_{1}y_{1}y_{2}y_{2}+y_{2}y_{2}y_{1}y_{1}+y_{1}y_{2}y_{2}y_{1}+y_{2}y_{1}y_{1}y_{2}=g_{1}+g_{2}$,
but one may verify that $f_{1}^{\left(1\right)}+f_{2}^{\left(1\right)}\neq g_{1}^{\left(1\right)}+g_{2}^{\left(1\right)}$.
However, in case $f_{1},...,f_{r}\in\mathbb{F}\left\langle Y\right\rangle $
are linearly independent, \linebreak $\left(\cdot\right)^{\left(s\right)}:\text{span}_{\mathbb{F}}\left\{ f_{1},...,f_{r}\right\} \rightarrow\mathbb{F}\left\langle Y\right\rangle $
 is indeed a well-defined linear map.
\end{remark}

\begin{observation}
\label{expes}
Let $P_{O}=P_{O}\left(x_{1},...,x_{m}\right)\in W_{n,m}$, and let $T$ be any tableau of shape $\lambda\vdash m$. For every
$s\in\mathbb{N}$, it is clear that the columns of length $\geq2$ of $T$  and  $T^{\left(s\right)}$ coincide,
and thus $C_{T}=C_{T^{\left(s\right)}}$ (see Definition \ref{elemen}).

Writing $P_{O^{\left(s\right)}}=P_{O^{\left(s\right)}}\left(x_{1},...,x_{m},x_{m+1},...,x_{m+s}\right)\in V_{m+s}$,
and identifying $V_{m+s}=\mathbb{F}\left[S_{m+s}\right]$ as usual,
we obtain 
\begin{flalign*}
b_{T^{\left(s\right)}}\left(\text{y}\right)P_{O^{\left(s\right)}} & \stackrel{\left(\dagger\right)}{=}\underset{\tau\in C_{T^{\left(s\right)}}}{\sum}\text{sgn}\left(\tau\right)P_{O^{\left(s\right)}}\left(y_{i_{\tau\left(1\right)}},...,y_{i_{\tau\left(m\right)}},y_{i_{\tau\left(m+1\right)}},...,y_{i_{\tau\left(m+s\right)}}\right)\\
 & =\underset{\tau\in C_{T}}{\sum}\text{sgn}\left(\tau\right)P_{O^{\left(s\right)}}\left(y_{i_{\tau\left(1\right)}},...,y_{i_{\tau\left(m\right)}},y_{i_{m+1}},...,y_{i_{m+s}}\right)\\
 & \stackrel{\left(\dagger\dagger\right)}{=}\underset{\tau\in C_{T}}{\sum}\text{sgn}\left(\tau\right)P_{O^{\left(s\right)}}\left(y_{i_{\tau\left(1\right)}},...,y_{i_{\tau\left(m\right)}},y_{1},...,y_{1}\right)\\
 & =\underset{\tau\in C_{T}}{\sum}\text{sgn}\left(\tau\right)P_{O}^{\left(s\right)}\left(y_{i_{\tau\left(1\right)}},...,y_{i_{\tau\left(m\right)}}\right),
\end{flalign*}

where the equality $\left(\dagger\right)$ holds by Corollary \ref{inter},
and the equality $\left(\dagger\dagger\right)$ holds since (by construction) the first
row of $T^{\left(s\right)}$ contains the numbers $m+1,...,m+s$. 
\end{observation}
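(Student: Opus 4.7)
The plan is to verify the chain of equalities in Observation \ref{expes} by combining one structural fact about $T^{(s)}$ with a direct application of Proposition \ref{inter}. The first step is the identification $C_T = C_{T^{(s)}}$. By Definition \ref{elemen}, $T^{(s)}$ is obtained from $T$ by appending the boxes labelled $m+1, \ldots, m+s$ to the end of the first row; each new box forms a column of length one. Since the column subgroup is generated by permutations acting within columns of length at least two, these singleton columns contribute nothing, so $C_T$ and $C_{T^{(s)}}$ coincide as subgroups of $S_{m+s}$, with every $\tau \in C_T$ fixing the indices $m+1, \ldots, m+s$.

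Second, I would invoke Proposition \ref{inter} with $f = P_{O^{(s)}}$, viewed as an element of $\mathbb{F}[S_{m+s}] = V_{m+s}$, and the tableau $T^{(s)}$. This yields the first equality $(\dagger)$ directly, expressing $b_{T^{(s)}}(y)\, P_{O^{(s)}}$ as the signed sum over $\tau \in C_{T^{(s)}}$ of $P_{O^{(s)}}$ evaluated at the permuted substituted variables. Replacing $C_{T^{(s)}}$ by $C_T$ using the previous step and recording that $\tau(m+j) = m+j$ for every $\tau \in C_T$ and every $j \in \{1, \ldots, s\}$, the last $s$ arguments simplify to the fixed variables $y_{i_{m+1}}, \ldots, y_{i_{m+s}}$.

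Third, because the numbers $m+1, \ldots, m+s$ all lie in the first row of $T^{(s)}$ by construction, the substitution $\mathrm{Sub}_{(\mathrm{x},\mathrm{y})}^{T^{(s)}}$ forces $y_{i_{m+j}} = y_1$ for each $j$, giving $(\dagger\dagger)$. Unpacking $P_{O^{(s)}} = P_{n+s}(x_{A_1}, \ldots, x_{A_n}, x_{m+1}, \ldots, x_{m+s})$ with the last $s$ arguments replaced by $y_1$, the definition of $P^{(s)}$ immediately identifies the result with $P_O^{(s)}$ evaluated on the (substituted) monomials $x_{A_1}, \ldots, x_{A_n}$, which completes the chain.

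The only real care needed is to track the two roles played by $S_{m+s}$ under the identification $V_{m+s} = \mathbb{F}[S_{m+s}]$ — permuting places versus substituting variables — but Proposition \ref{inter} is precisely the bridge that reconciles them, and the fact that $C_T$ fixes $\{m+1, \ldots, m+s\}$ pointwise ensures that the reduction from a sum over $C_{T^{(s)}}$ to one over $C_T$ is clean. Beyond these bookkeeping issues, no genuine obstacle arises.
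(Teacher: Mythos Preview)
Your proposal is correct and follows exactly the same route as the paper: apply Proposition~\ref{inter} to $P_{O^{(s)}}$ and $T^{(s)}$, use $C_T = C_{T^{(s)}}$ (since the appended boxes form singleton columns) together with the fact that every $\tau \in C_T$ fixes $m+1,\ldots,m+s$, then observe that these indices lie in the first row so $y_{i_{m+j}} = y_1$, and finally unwind the definition of $P^{(s)}$. The only minor slip is that the construction of $T^{(s)}$ is not in Definition~\ref{elemen} but in its own definition later in Section~4; otherwise your argument is a faithful (and slightly more detailed) rendering of the paper's justification.
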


\begin{lemma}
\label{injection}
Fix $K\in\mathbb{N}$, and let $\mu\vdash4K$. There exists an integer
$3K\leq M_{K}$ such that, for every $N\geq M_{K}$, $T\in\text{Tab}\left(\mu^{\left(N-3K\right)}\right)$ and
$O_{1},...,O_{l}\in\Omega_{N,N+K}$, if
\begin{gather*}
f_{1},...,f_{r}\in\text{span}_{\mathbb{F}}\left\{ \text{Sub}_{\left(\text{x},\text{y}\right)}^{T}\left(P_{O_{1}}\right),...,\text{Sub}_{\left(\text{x},\text{y}\right)}^{T}\left(P_{O_{l}}\right)\right\} 
\end{gather*}

are linearly independent, then for all large enough $s$, the map $\left(\cdot\right)^{\left(s\right)}:\text{span}_{\mathbb{F}}\left\{ f_{1},...,f_{r}\right\} \rightarrow\mathbb{F}\left\langle Y\right\rangle $
is injective.
\end{lemma}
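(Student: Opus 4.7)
The plan is to construct an explicit ``left inverse'' for the operator $(\cdot)^{(s)}$ on the relevant span. Writing $f_i = \sum_j c_{ij}\,\text{Sub}_{(\text{x},\text{y})}^T(P_{O_j})$, each substituted polynomial $\text{Sub}_{(\text{x},\text{y})}^T(P_{O_j})$ is of the form $P_N(m_1^{(j)},\ldots,m_N^{(j)})$ for monomials $m_k^{(j)}$ in $y_1,\ldots,y_{\ell(\mu)}$, and hence
\begin{gather*}
f_i^{(s)} = \sum_j c_{ij}\, P_{N+s}\bigl(m_1^{(j)},\ldots,m_N^{(j)},\overbrace{y_1,\ldots,y_1}^{s}\bigr).
\end{gather*}
The goal is to produce a linear map $\Psi_s:\mathbb{F}\langle Y\rangle\to\mathbb{F}\langle Y\rangle$ and a nonzero scalar $\kappa_s\in\mathbb{F}$ with $\Psi_s(g^{(s)})=\kappa_s\, g$ for every $g\in\text{span}_{\mathbb{F}}\{\text{Sub}_{(\text{x},\text{y})}^T(P_{O_j})\}$; injectivity of $(\cdot)^{(s)}$ on $\text{span}\{f_1,\ldots,f_r\}$ then follows at once, since $\sum_i\alpha_i f_i^{(s)}=0$ forces $\kappa_s\sum_i\alpha_i f_i=0$ and hence $\alpha_i=0$.

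The map $\Psi_s$ should be defined via the monomial bookkeeping $u\mapsto u^{(s)}=a\cdot y_1^s\,C(u)\cdot b$ from the notation table, which inserts a block of $s$ copies of $y_1$ immediately before the longest $y_1$-block $C(u)$ of $u$. For $s$ exceeding the length of every $y_1$-block appearing in any concatenation $m_{k_1}^{(j_1)}\cdots m_{k_N}^{(j_N)}$ (a finite bound depending only on $N$ and $\mu$, hence on $K$), the map $u\mapsto u^{(s)}$ is injective on the (finite) union of supports of the $\text{Sub}_{(\text{x},\text{y})}^T(P_{O_j})$: from $u^{(s)}$ one identifies the unique $y_1$-block of length at least $s$, trims off $s$ copies of $y_1$, and recovers $u$. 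Define $\Psi_s$ to send each monomial of the form $u^{(s)}$ (for $u$ in this support) to $u$, and to kill every other monomial.

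The identity $\Psi_s(g^{(s)}) = \kappa_s\, g$ is exactly the content of Lemma \ref{tecnical}. The main obstacle will be the ``no spurious contributions'' step: I must verify that in the expansion of $P_{N+s}(m_1^{(j)},\ldots,m_N^{(j)},y_1,\ldots,y_1)$, the coefficient of $u^{(s)}$ receives contributions \emph{only} from orderings in which all $s$ inserted copies of $y_1$ lie adjacent to the longest $y_1$-block of some fixed permutation $m_{\sigma(1)}^{(j)}\cdots m_{\sigma(N)}^{(j)} = u$, and that the resulting multiplicity is the same combinatorial constant $\kappa_s$ for every such $u$. The key point is that once $s$ is chosen larger than any $y_1$-block length achievable by juxtaposing the source monomials, the unique maximal $y_1$-block of $u^{(s)}$ has length strictly greater than anything appearing in the arrangements of $m_1^{(j)},\ldots,m_N^{(j)}$ alone; therefore all $s$ inserted $y_1$'s must cluster together inside that block, which pins down both the source permutation and the split of $u$ as $a\cdot C(u)\cdot b$. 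Counting the number of admissible placements of the $s$ extra $y_1$'s inside (or flush to) the $C(u)$-block then yields a constant $\kappa_s$ independent of $u$, finishing the reduction to Lemma \ref{tecnical} and thereby the lemma. The integer $M_K\geq 3K$ is chosen so that the base size $N$ is already past the range where the bounds from Theorem \ref{upper_W} stabilize, ensuring the support-size estimates needed for the threshold on $s$.
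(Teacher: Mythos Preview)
Your proposal contains a genuine gap: the assertion that $\text{Coef}_{u^{(s)}}(P^{(s)}) = \kappa_s\cdot\text{Coef}_u(P)$ for a \emph{single} scalar $\kappa_s$ independent of the monomial $u$ is false, and this is not what Lemma~\ref{tecnical} says. Lemma~\ref{tecnical} only guarantees that $\text{Coef}_{u^{(s)}}(P^{(s)}) = p_u(s)\cdot s!$ for a polynomial $p_u$ \emph{depending on $u$}, with $p_u(0)=\text{Coef}_u(P)$. The ratio $p_u(s)/p_u(0)$ genuinely varies with $u$: in the proof of Lemma~\ref{tec_oftec} the contribution from a single $F$-term is a binomial coefficient $\binom{(t-k)+|C(u)|-Q+s}{t-k}$, where $t-k$ records how many of the non-trivial monomials $m_{k+1},\ldots,m_t$ land \emph{inside} the central block $C(u)$. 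Different monomials $u$ in the support of $P$ can have different values of $t-k$, hence different polynomial growth in $s$. Concretely, Example~4.23 gives a monomial $u$ whose coefficient passes from $6$ at $s=0$ to $6+s$; another monomial in the same $P$ with $t-k=0$ would have constant coefficient. So your proposed left inverse $\Psi_s$ will not send $g^{(s)}$ to a scalar multiple of $g$; it sends it to some distortion of $g$ in which each monomial has been rescaled by its own factor.

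The paper sidesteps this by a determinant argument: pick monomials $u_1,\ldots,u_r$ witnessing the independence of $f_1,\ldots,f_r$, so that $\det[\text{Coef}_{u_i}(f_j)]\neq 0$. By Lemma~\ref{tecnical}, each entry $\tfrac{1}{s!}\text{Coef}_{u_i^{(s)}}(f_j^{(s)})$ is a polynomial $q_{i,j}(s)$, so the determinant $F(s)=\det[q_{i,j}(s)]$ is a polynomial in $s$ with $F(0)\neq 0$; hence $F(s)\neq 0$ for all large $s$, which is exactly injectivity of $(\cdot)^{(s)}$. The point is that one does not need the individual entries to scale uniformly, only that the whole matrix of coefficients varies polynomially in $s$ and is nonsingular at $s=0$. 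Your left-inverse idea would require the stronger uniform-scaling statement, which is unavailable.
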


\begin{claim}
Lemma \ref{injection}  implies Lemma \ref{First_reduction}.
\end{claim}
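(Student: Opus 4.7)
The plan is to translate the problem from $e_T$-action to $b_T(y)$-action via Corollary~\ref{regev_col}, transport the resulting linear independence through the $(\cdot)^{(s)}$ operation by invoking Lemma~\ref{injection}, and then translate back using Observation~\ref{expes}. Concretely, I would set $N$ equal to the integer $M_K$ produced by Lemma~\ref{injection} (which already satisfies $M_K \geq 3K$), with $s = n - N$ playing the role of the parameter $s$ in that lemma.

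Fix $T \in \text{Tab}(\lambda^{(N-3K)})$ and assume $e_T P_{O_1},\dots,e_T P_{O_l}$ are linearly independent in $W_{N,N+K}$. By Corollary~\ref{regev_col}, this is equivalent to the linear independence of $f_j := b_T(y)\, P_{O_j}$. Unpacking $f_j$ via Proposition~\ref{inter}, and using the fact that substituting $x_j \mapsto y_{i_{\tau(j)}}$ amounts to first applying the left action of $\tau \in S_{N+K}$ and then $\text{Sub}_{(\text{x},\text{y})}^T$, one obtains
\[
   f_j \;=\; \sum_{\tau \in C_T} \text{sgn}(\tau)\,\text{Sub}_{(\text{x},\text{y})}^T\bigl(P_{\tau \cdot O_j}\bigr),
\]
where $\tau \cdot O_j$ denotes the ordered partition obtained by applying $\tau$ to every index appearing in $O_j$ (as in the remark following Example~\ref{Exa3}). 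In particular, if we collect the finite family $\mathcal{F} := \{\tau \cdot O_j : \tau \in C_T,\; 1 \leq j \leq l\} \subseteq \Omega_{N,N+K}$, then every $f_j$ lies in $\text{span}_{\mathbb{F}}\bigl\{\text{Sub}_{(\text{x},\text{y})}^T(P_O) : O \in \mathcal{F}\bigr\}$, which is exactly the setup required by Lemma~\ref{injection}.

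Lemma~\ref{injection}, applied with this family $\mathcal{F}$, now guarantees that for all sufficiently large $s$, the map $(\cdot)^{(s)}$ is injective on $\text{span}_{\mathbb{F}}\{f_1,\dots,f_l\}$, so $f_1^{(s)},\dots,f_l^{(s)}$ remain linearly independent. A direct computation, which is essentially the content of Observation~\ref{expes}, gives
\[
   f_j^{(s)} \;=\; \bigl(b_T(y)\, P_{O_j}\bigr)^{(s)} \;=\; b_{T^{(s)}}(y)\, P_{O_j^{(s)}},
\]
so $b_{T^{(s)}}(y) P_{O_1^{(s)}},\dots,b_{T^{(s)}}(y) P_{O_l^{(s)}}$ are linearly independent. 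One final application of Corollary~\ref{regev_col}, this time to the tableau $T^{(s)} \in \text{Tab}(\lambda^{(N-3K+s)})$, converts this back to linear independence of $e_{T^{(s)}} P_{O_1^{(s)}},\dots,e_{T^{(s)}} P_{O_l^{(s)}}$. Setting $s = n - N$ yields exactly the conclusion of Lemma~\ref{First_reduction}.

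The main obstacle in this reduction is not any heavy computation but rather the subtlety that $b_T(y)\, P_{O_j}$ is a signed sum of $\text{Sub}_{(\text{x},\text{y})}^T$-images of the $C_T$-translates of $O_j$, not of $O_j$ itself; consequently Lemma~\ref{injection} must be invoked with the enlarged family $\mathcal{F}$ rather than with the original $\{O_j\}$. Once this is noted the argument is essentially bookkeeping, gluing together Corollary~\ref{regev_col} and Observation~\ref{expes} with Lemma~\ref{injection}. The genuine technical difficulty, of course, lies in Lemma~\ref{injection} itself, which is deferred to Steps~3--4.
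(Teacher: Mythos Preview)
Your argument is correct and follows essentially the same route as the paper: translate via Corollary~\ref{regev_col} to $b_T(y)$-independence, invoke Lemma~\ref{injection}, use Observation~\ref{expes} to identify the result with $b_{T^{(s)}}(y)P_{O_j^{(s)}}$, and translate back. Your explicit enlargement to the family $\mathcal{F}=\{\tau\cdot O_j:\tau\in C_T,\,1\le j\le l\}$ is a point the paper leaves implicit but which is indeed needed for Lemma~\ref{injection} to apply as stated.
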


\begin{proof}
Let $\lambda\vdash4K$. Choose $N$ large enough for which
Lemma \ref{injection} holds, and take any \linebreak $T\in\text{Tab}\left(\lambda^{\left(N-3K\right)}\right)$.
Assuming that $e_{T}P_{O_{1}},...,e_{T}P_{O_{l}}\in W_{N,N+K}$ are
linearly independent,  then by applying Corollary \ref{regev_col} we obtain that 
\begin{gather*}
\underset{\tau\in C_{T}}{\sum}\text{sgn}\left(\tau\right)P_{O_{1}}\left(y_{i_{\tau\left(1\right)}},...,y_{i_{\tau\left(N+K\right)}}\right),...,\underset{\tau\in C_{T}}{\sum}\text{sgn}\left(\tau\right)P_{O_{l}}\left(y_{i_{\tau\left(1\right)}},...,y_{i_{\tau\left(N+K\right)}}\right)
\end{gather*}
are linearly independent. According to Lemma \ref{injection},  for sufficiently large
 $s$
\begin{gather*}
\underset{\tau\in C_{T}}{\sum}\text{sgn}\left(\tau\right)P_{O_{1}}^{\left(s\right)}\left(y_{i_{\tau\left(1\right)}},...,y_{i_{\tau\left(N+K\right)}}\right),...,\underset{\tau\in C_{T}}{\sum}\text{sgn}\left(\tau\right)P_{O_{l}}^{\left(s\right)}\left(y_{i_{\tau\left(1\right)}},...,y_{i_{\tau\left(N+K\right)}}\right)
\end{gather*}
are linearly independent, which by Observation \ref{expes} means that for sufficiently large
 $s$
\begin{gather*}
b_{T^{\left(s\right)}}\left(\text{y}\right)P_{O_{1}^{\left(s\right)}},...,b_{T^{\left(s\right)}}\left(\text{y}\right)P_{O_{l}^{\left(s\right)}}
\end{gather*}
are linearly independent. Applying Corollary \ref{regev_col} once again, we obtain
that for sufficiently large $s$,
\begin{gather*}
e_{T^{\left(s\right)}}P_{O_{1}^{\left(s\right)}},...,e_{T^{\left(s\right)}}P_{O_{l}^{\left(s\right)}}
\end{gather*} are also linearly independent, and Lemma \ref{First_reduction} follows.
\end{proof}

\subsection{Step 3 - Proof of Lemma \ref{injection}}

Here we start to prove Lemma \ref{injection}. For this purpose, one needs
 to establish a connection between the coefficients of $P\left(y_{i_{1}},...,y_{i_{m}}\right)=P_{n}\left(m_{1},...,m_{n}\right)$
and the coefficients of $P^{\left(s\right)}\left(y_{i_{1}},...,y_{i_{m}}\right)=P_{n+s}\left(m_{1},...,m_{n},y_{1},...,y_{1}\right)$.

Let us denote $\text{Coef}_{u}\left(P\right):=$ the coefficient of
the monomial $u\in\mathbb{F}\left\langle Y\right\rangle $ in $P$.

\begin{definition}
Let $u\in\mathbb{F}\left\langle Y\right\rangle $ be a monomial.

We say that $v\in\mathbb{F}\left\langle Y\right\rangle $ is a \textbf{submonomial} of $u$, and write $v\subset u$,
if $u$ can be  written as $u=gvh$ for some (possibly empty) monomials $g,h\in\mathbb{F}\left\langle Y\right\rangle $.

We say that $u$ has \textbf{length} $m$, and write $\left|u\right|=m$,
if $u=y_{i_{1}}\cdot\cdot\cdot y_{i_{m}}$ for some variables $\left\{ y_{i_{j}}\right\} \subset\mathbb{F}\left\langle Y\right\rangle $.

The \textbf{central part} of $u$, denoted by $\text{C}\left(u\right)$,
is the largest submonomial of $u$ of the form $y_{1}^{r}$, (if it
exists). In case of more than one such submonomial, we take the
leftmost one.
\end{definition}

\begin{definition}
\label{centrall}
Let $u\in\mathbb{F}\left\langle Y\right\rangle $ be a monomial for which $\text{C}\left(u\right)$
exists, and write $u=g\cdot\text{C}\left(u\right)\cdot h$ for some
(possibly empty) monomials $g,h\in\mathbb{F}\left\langle Y\right\rangle $.
We define for any $s\in\mathbb{Z}_{\geq0}$ 
\begin{gather*}
u^{\left(s\right)}=g\cdot y_{1}^{s}\text{C}\left(u\right)\cdot h.
\end{gather*}

\end{definition}

\begin{remark}
\label{Cs}
It is clear that for every $s$, $\left|\text{C}\left(u^{\left(s\right)}\right)\right|=\left|\text{C}\left(u\right)\right|+s$.
\end{remark}

The following lemma is the key to proving Lemma \ref{injection}.

\begin{lemma}
\label{tecnical}
Fix $K\in\mathbb{N}$, and let $\mu\vdash4K$. There exists an integer
$M_{K}\geq3K$ such that for every $n\geq M_{K}$, the following holds:
if $T\in\text{Tab}\left(\mu^{\left(n-3K\right)}\right)$, $O\in\Omega_{n,n+K}$,
and we set $P:=\text{Sub}_{\left(\text{x},\text{y}\right)}^{T}\left(P_{O}\right)$,
then for every monomial $u$ in $P$, there exists a polynomial
$p\left(x\right)\in\mathbb{F}\left[x\right]$ such that 
\begin{gather*}
\text{Coef}_{u^{\left(s\right)}}\left(P^{\left(s\right)}\right)=p\left(s\right)\cdot s!
\end{gather*}
for every $s\in\mathbb{Z}_{\geq0}$.
\end{lemma}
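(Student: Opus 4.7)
The plan is to compute $\text{Coef}_{u^{(s)}}(P^{(s)})$ by a direct combinatorial analysis, factoring the count into an $(r+s)!$ piece that absorbs the indistinguishable single-$y_1$ pieces, times a polynomial in $s$.

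I would first choose $M_K$ large enough (for instance $M_K = 24K^2 + 9K$) so that, for every $n \geq M_K$, every $T \in \text{Tab}(\mu^{(n-3K)})$, every $O \in \Omega_{n,n+K}$, and every monomial $u$ in $P = \text{Sub}^{T}_{(\text{x},\text{y})}(P_O) = P_n(m_1,\dots,m_n)$, two structural facts hold: \textbf{(a)} at least $n - 5K$ of the $m_i$ equal the single letter $y_1$ --- denote this set by $R$ and set $r := |R|$, so the remaining $n - r \leq 5K$ \emph{non-trivial} pieces have total length at most $6K$; and \textbf{(b)} the central block $C(u) = y_1^{a}$ satisfies $a \geq 6K$, which exceeds the length of every non-trivial piece. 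Both are pigeonhole arguments: for (a), at most $K$ parts $A_i$ have $|A_i| \geq 2$ (since $\sum |A_i| = n+K$) and at most $4K$ parts meet a non-first row of $T$ (since those rows hold $4K - \mu_1 \leq 4K$ elements altogether); for (b), $u$ has at most $4K$ non-$y_1$ letters and so at most $4K+1$ maximal $y_1$-blocks covering at least $n-3K$ letters in total, forcing the longest block to have length at least $(n-3K)/(4K+1)$. Writing $P^{(s)} = \sum_{\tau \in S_{n+s}} m'_{\tau(1)}\cdots m'_{\tau(n+s)}$ with $m'_i = m_i$ for $i \leq n$ and $m'_i = y_1$ for $i > n$, and observing that permuting the $r+s$ single-$y_1$ pieces does not change the resulting monomial, one obtains
\[
\text{Coef}_{u^{(s)}}\bigl(P^{(s)}\bigr) \;=\; (r+s)!\cdot M\bigl(u^{(s)}\bigr),
\]
where $M(u^{(s)})$ counts the ordered placements of the non-trivial pieces as non-overlapping contiguous sub-words of $u^{(s)}$ matching letters. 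Since $(r+s)! = s!\cdot(s+1)(s+2)\cdots(s+r)$, it remains to show $M(u^{(s)}) \in \mathbb{Z}[s]$.

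For the polynomiality of $M$, I would write $u^{(s)} = g\cdot y_1^{a+s}\cdot h$ and classify each placement by a \emph{configuration}: assign every non-trivial piece one of five types according to whether it lies entirely in $g$, entirely in $h$, entirely inside the central block, or straddles its left or right boundary. A sixth possibility, a piece covering the entire central block, is excluded by (b) for every $s \geq 0$. Placements of pieces of the first, second, fourth, and fifth types admit only finitely many options, all independent of $s$, because the non-$y_1$ letters of $u^{(s)}$ occupy the same relative positions as in $u$ apart from a global shift by $s$ to the right of the central block. Pieces placed entirely inside the central block contribute, for each relative ordering and each specification of how many central-block letters are consumed by straddling pieces, a stars-and-bars count $\binom{(a+s)-c-L+k}{k}$ for configuration-dependent constants $c, L, k$; this is a polynomial in $s$ that agrees with the actual combinatorial count for every $s \geq 0$ (both vanish when the pieces do not fit, which by (b) only happens for negative $s$). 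Summing the finitely many configurations yields $M(u^{(s)}) \in \mathbb{Z}[s]$, completing the proof with $p(x) := (x+1)(x+2)\cdots(x+r)\cdot M(u^{(x)}) \in \mathbb{F}[x]$.

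The main obstacle is the bookkeeping in the configuration analysis: verifying that the binomial formula equals the actual count for \emph{every} $s \geq 0$ (not merely asymptotically), tracking the anchor positions of straddling pieces as they slide with $s$, and correctly enumerating the interleaved orderings of pieces of different types. Property (b) is crucial here: without $a \geq 6K$, some non-trivial piece could match or exceed the entire central block at $s = 0$, and $M(u^{(s)})$ would generically fail to be polynomial in $s$ near zero.
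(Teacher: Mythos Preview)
Your proposal is correct and follows essentially the same strategy as the paper's proof. Both arguments (i) separate out the pieces equal to $y_1$ and factor the corresponding $(r+s)!$ from the coefficient, and (ii) show the remaining count of placements of the non-trivial pieces is polynomial in $s$ by exploiting that the central $y_1$-block of $u$ is longer than the total length of the non-trivial pieces. The paper packages step (ii) as a separate lemma on the auxiliary polynomials $F_s(m_1,\dots,m_r)$ and invokes the identity $P^{(s)}=s!\sum_{\sigma}F_s(m_{\sigma(1)},\dots,m_{\sigma(r)})$, whereas you carry out the same combinatorics directly via your five-type configuration analysis and stars-and-bars; your constants ($5K$ non-trivial pieces of total length $\le 6K$, versus the paper's $16K^2$) are sharper, but the underlying mechanism is identical.
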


Before presenting the proof of Lemma \ref{tecnical}, let us show how Lemma \ref{injection} follows from it.

\begin{proof}[Proof of Lemma \ref{injection}]
Suppose $n$ is large enough so that Lemma \ref{tecnical} holds, take \linebreak $O_{1},...,O_{l}\in\Omega_{n,n+K}$, and set
\begin{gather*}
P_{i}:=\text{Sub}_{\left(\text{x},\text{y}\right)}^{T}\left(P_{O_{i}}\right)\,\,\,\,\,\,(i\in\left\{ 1,...,l\right\} ).
\end{gather*}

Assuming that $f_{1},...,f_{r}\in\text{span}_{\mathbb{F}}\left\{ P_{1},...,P_{l}\right\} $
are linearly independent, we can find $r$ monomials $u_{1},...,u_{r}$
such that the matrix 
\begin{gather*}
\left[\text{Coef}_{u_{i}}\left(f_{j}\right)\right]_{\left(i,j\right)\in\left[r\right]^{2}}
\end{gather*}
has a nonzero determinant. By  Lemma \ref{tecnical}, there are polynomials
$p_{1,1},...,p_{r,l}$ such that $\frac{1}{s!}\text{Coef}_{u_{i}^{\left(s\right)}}\left(P_{j}^{\left(s\right)}\right)=p_{i,j}\left(s\right)$
for every $s\in\mathbb{Z}_{\geq0}$. As $f_{1},...,f_{r}$
are linear combination of $P_{1},...,P_{l}$, there exist polynomials
$\left\{ q_{i,j}\right\} _{\left(i,j\right)\in\left[r\right]^{2}}$
such that for every $s\in\mathbb{Z}_{\geq0}$ we have
\begin{gather*}
\frac{1}{s!}\text{Coef}_{u_{i}^{\left(s\right)}}\left(f_{j}^{\left(s\right)}\right)=q_{i,j}\left(s\right).
\end{gather*}

It follows that 
\begin{gather*}
F\left(s\right):=\text{det}\left(\left[\frac{1}{s!}\text{Coef}_{u_{i}^{\left(s\right)}}\left(f_{j}^{\left(s\right)}\right)\right]_{\left(i,j\right)\in\left[r\right]^{2}}\right)=\text{det}\left(\left[q_{i,j}\left(s\right)\right]_{\left(i,j\right)\in\left[r\right]^{2}}\right)
\end{gather*}
is a polynomial in $s$. Since we assume $F\left(0\right)\neq0$,
$F$ is a nonzero polynomial, and thus has only a finite number of
roots. In particular, for every large enough $s$, we have $F\left(s\right)\neq0$,
implying that $f_{1}^{\left(s\right)},...,f_{r}^{\left(s\right)}$
are linearly independent, and $\left(\cdot\right)^{\left(s\right)}:\text{span}_{\mathbb{F}}\left\{ f_{1},...,f_{r}\right\} \rightarrow\mathbb{F}\left\langle Y\right\rangle $
is injective.
\end{proof}

\subsection{Step 4 - Proof of Lemma \ref{tecnical}}

\begin{definition}
Given monomials $m_{1},...,m_{r}\in\mathbb{F}\left\langle Y\right\rangle $ and $s\in\mathbb{Z}_{\geq0}$,
we define 
\begin{gather*}
F_{s}\left(m_{1},...,m_{r}\right):=\underset{\underset{a_{0}+...+a_{r}=s}{a_{0},...,a_{r}\in\mathbb{Z}_{\geq0}}}{\sum}y_{1}^{a_{0}}m_{1}y_{1}^{a_{1}}m_{2}y_{1}^{a_{2}}\cdot\cdot\cdot y_{1}^{a_{r-1}}m_{r}y_{1}^{a_{r}}\in\mathbb{F}\left\langle Y\right\rangle .
\end{gather*}
\end{definition}

 One can verify the following simple proposition:

\begin{proposition}
\label{NNM}
Let $m_{1},...,m_{r}\in\mathbb{F}\left\langle Y\right\rangle $ be
monomials,  and let $P=P_{r}\left(m_{1},...,m_{r}\right)\in\mathbb{F}\left\langle Y\right\rangle $. Then for any
$s\in\mathbb{Z}_{\geq0}$
\begin{gather*}
P^{\left(s\right)}=P_{r+s}(m_{1},...,m_{r},\stackrel{s\text{ times}}{\overbrace{y_{1},...,y_{1}}})=s!\underset{\sigma\in S_{r}}{\sum}F_{s}\left(m_{\sigma\left(1\right)},...,m_{\sigma\left(r\right)}\right).
\end{gather*}
\end{proposition}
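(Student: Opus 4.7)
The first equality is immediate from the definition of $P^{(s)}$ applied to the particular representation $P = P_r(m_1,\ldots,m_r)$, so the substance is in the second equality. My plan is a direct counting argument. I would expand
\[
P_{r+s}(m_1,\ldots,m_r,\underbrace{y_1,\ldots,y_1}_{s\text{ times}}) = \sum_{\tau\in S_{r+s}} w_{\tau(1)}\,w_{\tau(2)}\cdots w_{\tau(r+s)},
\]
where $w_i = m_i$ for $1\le i\le r$ and $w_i = y_1$ for $r<i\le r+s$, and then group the summands by the underlying word.

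The key observation is that every summand, when written out as a monomial in $\mathbb{F}\langle Y\rangle$, has the form
\[
y_1^{a_0}\,m_{\sigma(1)}\,y_1^{a_1}\,m_{\sigma(2)}\,\cdots\,y_1^{a_{r-1}}\,m_{\sigma(r)}\,y_1^{a_r}
\]
for a unique $\sigma\in S_r$ and a unique tuple $(a_0,\ldots,a_r)\in\mathbb{Z}_{\ge 0}^{r+1}$ with $a_0+\cdots+a_r=s$. Indeed, $\sigma$ records the order in which the blocks $m_j$ appear from left to right, and the $a_i$ record the lengths of the $y_1$-blocks between (and around) them.

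The next step — which is the only thing to be careful about — is to count, for each fixed data $(\sigma;a_0,\ldots,a_r)$, how many permutations $\tau\in S_{r+s}$ produce that particular word. The positions of the $r$ blocks $m_j$ are determined by the $a_i$, and the requirement that the $j$-th block from the left equals $m_{\sigma(j)}$ pins down $\tau$ on the preimage of $\{1,\ldots,r\}$ completely. The remaining $s$ positions are occupied by the indistinguishable symbol $y_1$, but they come from the $s$ distinct slots $r+1,\ldots,r+s$, and any bijection between these two sets yields the same word. Hence the count is exactly $s!$.

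Summing over all $\sigma\in S_r$ and all compositions $a_0+\cdots+a_r=s$, pulling out the common factor $s!$, and recognizing the inner sum as $F_s(m_{\sigma(1)},\ldots,m_{\sigma(r)})$ gives the claimed identity. The only "obstacle" is keeping the bookkeeping straight; checking a small case such as $r=2,s=1$ confirms the factor of $s!$ and the absence of any additional symmetry among the $m_j$'s that would cause over- or under-counting.
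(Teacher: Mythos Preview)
Your argument is correct and is exactly the kind of direct verification the paper has in mind: the paper states this proposition without proof, merely remarking that ``one can verify'' it. Your bookkeeping via the map $\tau\mapsto(\sigma;a_0,\dots,a_r)$ with $s!$-to-one fibers is the natural way to carry out that verification.
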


\begin{lemma}
\label{tec_oftec}
Fix $q,r\in\mathbb{N}$, and let $m_{1},...,m_{r}\in\mathbb{F}\left\langle Y\right\rangle $
be monomials such that $\left|m_{i}\right|\leq q$ for each $i$.
Then, for any monomial $u\in\mathbb{F}\left\langle Y\right\rangle $
such that $\left|\text{C}\left(u\right)\right|>qr$, the coefficient
of $u^{\left(s\right)}$ in $F_{s}\left(m_{1},...,m_{r}\right)$ is
a polynomial in $s$.
\end{lemma}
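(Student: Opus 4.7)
The plan is to interpret $\text{Coef}_{u^{(s)}}(F_s(m_1,\ldots,m_r))$ as the number of factorizations $u^{(s)} = y_1^{a_0} m_1 y_1^{a_1} \cdots m_r y_1^{a_r}$ with $a_0 + \cdots + a_r = s$, and to show that this count is a polynomial in $s$. Writing $u = g \cdot y_1^d \cdot h$ with $d = |C(u)| > qr$, we have $u^{(s)} = g \cdot y_1^{d+s} \cdot h$ with $g, h$ independent of $s$, and each factorization is determined by the starting positions $p_1 < \cdots < p_r$ of the $m_i$'s in $u^{(s)}$, subject to the matching conditions and to all gaps between the windows consisting only of $y_1$'s.

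I would then classify factorizations by a finite combinatorial pattern that records, for each $m_i$, whether it lies entirely in $g$, straddles the left boundary of the central block $y_1^{d+s}$, lies entirely inside that block (which forces $m_i$ to be a pure $y_1$-monomial), straddles the right boundary, or lies entirely in $h$. The ordering of the $m_i$'s forces these five groups to appear in exactly this order along the sequence, with at most one straddler on each side; the total number of patterns depends only on $u$ and the $m_i$'s, not on $s$. For each pattern, the placements of the non-central windows are determined by the fixed structure of $g$, $h$, and the $m_i$'s, and hence contribute a constant factor independent of $s$. The $s$-dependence comes entirely from placing the $M$ middle $y_1$-windows, of total length $L$, inside a free stretch of length $D := d + s - k - k'$ in the central block, where $k, k'$ are the inside-central-block lengths of the (at most two) straddles. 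A stars-and-bars count gives $\binom{D - L + M}{M}$ such placements, which is a polynomial of degree $M$ in $s$; summing the polynomial contributions across the finitely many patterns yields a polynomial in $s$, as desired.

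The main obstacle is ensuring that the binomial formula $\binom{D - L + M}{M}$, viewed as a polynomial in $s$, equals the true count of placements for \emph{every} $s \geq 0$, not merely for $s$ large; otherwise per-pattern contributions could fail to match the true count at small $s$ (where patterns may not yet be "realizable") and the whole sum could fail to be polynomial. The hypothesis $d > qr$ is exactly what prevents this: since $k + k' + L$ equals the total length of the $m_i$-portions that sit inside the central block, we have $k + k' + L \leq \sum_i |m_i| \leq qr < d$, so $D - L \geq d - (k + k' + L) > 0$ for every $s \geq 0$. This uniform positivity guarantees that the binomial expression is the actual count on the nose for every $s \geq 0$, and the lemma follows.
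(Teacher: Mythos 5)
Your proposal is correct and proceeds along essentially the same lines as the paper's proof: both use $\left|\text{C}\left(u\right)\right|>qr$ to see that the portion of any factorization lying outside (or straddling the boundary of) the growing central $y_{1}$-block is rigid and independent of $s$, and then reduce the whole $s$-dependence to a stars-and-bars count of the $y_{1}$-gaps inside the central block, yielding a binomial coefficient that is polynomial in $s$; your check that $k+k'+L\leq\sum_{i}\left|m_{i}\right|\leq qr<\left|\text{C}\left(u\right)\right|$ (so the formula is the true count for every $s\geq0$) and your summation over the finitely many boundary patterns and outer placements are, if anything, slightly more careful than the paper's argument, which fixes a single factorization and treats its outer part $A,B$ as canonical. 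One small remark: after your opening sentence you silently drop the constraint $a_{0}+\cdots+a_{r}=s$, so what you actually count is the coefficient of $u^{\left(s\right)}$ in $F_{s+n}\left(m_{1},...,m_{r}\right)$ for the fixed shift $n=\left|u\right|-\sum_{i}\left|m_{i}\right|$; this is harmless and in fact is the version the paper itself proves and later needs, since under the hypothesis $\left|u\right|\geq\left|\text{C}\left(u\right)\right|>qr\geq\sum_{i}\left|m_{i}\right|$ a degree count shows the coefficient in $F_{s}$ itself is identically zero.
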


Before presenting the proof, we illustrate this lemma with an example.

\begin{example}
Take $m_{1}=y_{2}$, $m_{2}=y_{1}^{2}$ and $u=y_{1}y_{2}y_{1}^{5}$.
Then $\left|m_{i}\right|\leq2$, and  $\left|\text{C}\left(u\right)\right|=\left|y_{1}^{5}\right|=5>4$.
Note that for every $n\neq$ $6$, the coefficient of $u^{\left(s\right)}=y_{1}y_{2}y_{1}^{5+s}$
in 
\begin{gather*}
F_{s+n}\left(y_{2},y_{1}^{2}\right)=\underset{\underset{a_{0}+a_{1}+a_{2}=s+n}{a_{0},a_{1},a_{2}\in\mathbb{Z}_{\geq0}}}{\sum}y_{1}^{a_{0}}\left(y_{2}\right)y_{1}^{a_{1}}\left(y_{1}^{2}\right)y_{1}^{a_{2}}=\underset{\underset{a_{0}+a_{1}+a_{2}=s+n}{a_{0},a_{1},a_{2}\in\mathbb{Z}_{\geq0}}}{\sum}y_{1}^{a_{0}}y_{2}y_{1}^{2+a_{1}+a_{2}}
\end{gather*}

equals zero, and when $n=6$, the coefficient of $u^{\left(s\right)}$
in $F_{s+6}\left(y_{2},y_{1}^{2}\right)$ equals 
\begin{flalign*}
\left|\left\{ \left(a_{0},a_{1},a_{2}\right)\in\mathbb{Z}_{\geq0}:a_{0}=1,2+a_{1}+a_{2}=5+s\right\} \right| & =\left|\left\{ \left(a_{1},a_{2}\right)\in\mathbb{Z}_{\geq0}:a_{1}+a_{2}=5+s\right\} \right|\\
 & =6+s.
\end{flalign*}

On the other hand, taking $m_{1}=y_{2}y_{1}y_{2}$, $m_{2}=y_{2}$ and
$u=y_{2}y_{1}y_{2}^{2}y_{1}$, the coefficient of $u^{\left(s\right)}$
in $F_{s+1}\left(y_{2}y_{1}y_{2},y_{2}\right)$ equals 1 for $s=0$,
and 0 for any $s>0$ (since $u^{\left(s\right)}=y_{2}y_{1}^{s+1}y_{2}^{2}y_{1}$
is not of the form $y_{1}^{a_{0}}y_{2}y_{1}y_{2}y_{1}^{a_{1}}y_{2}y_{1}^{a_{2}}$if
$a_{0}+a_{1}+a_{2}>1$). Thus, the coefficient of $u^{\left(s\right)}$
is not a polynomial in $s$.
\end{example}

\begin{proof}
Suppose $u\in\mathbb{F}\left\langle Y\right\rangle $ is a monomial
with $\left|\text{C}\left(u\right)\right|>qr$. Fix some $n\in\mathbb{Z}_{\geq0}$.
If $u^{\left(s\right)}$ is not a monomial in $F_{s+n}\left(m_{1},...,m_{r}\right)$
for every $s\in\mathbb{Z}_{\geq0}$, there is nothing to show (we can take
the zero polynomial). Hence, we may assume that there exist $s,a_{0},...,a_{r}\in\mathbb{Z}_{\geq0}$ satisfying $a_{0}+...+a_{r}=n+s$,
such that 
\begin{gather*}
u^{\left(s\right)}=y_{1}^{a_{0}}m_{1}y_{1}^{a_{1}}m_{2}y_{1}^{a_{2}}\cdot\cdot\cdot y_{1}^{a_{r-1}}m_{r}y_{1}^{a_{r}}.
\end{gather*}
Since we assume $\left|\text{C}\left(u\right)\right|>qr$, we have
$\left|\text{C}\left(u^{\left(s\right)}\right)\right|\stackrel{\ref{Cs}}{=}\left|\text{C}\left(u\right)\right|+s>qr+s$.
Thus, as \linebreak $\left|m_{1}\right|+...+\left|m_{r}\right|\leq qr$, we can
write
\begin{flalign*}
\text{C}\left(u^{\left(s\right)}\right) & =y_{1}^{a_{k}}m_{k+1}\cdot\cdot\cdot m_{t}y_{1}^{a_{t}},
\end{flalign*}

where the numbers $a_{k},...,a_{t}$ satisfy $a_{k}+...+a_{t}>s$,
and
\begin{gather*}
a_{k}+...+a_{t}+\left|m_{k+1}\right|+..+\left|m_{t}\right|=\left|\text{C}\left(u^{\left(s\right)}\right)\right|.
\end{gather*}
Writing 
\begin{flalign*}
A & =y_{1}^{a_{0}}m_{1}\cdot\cdot m_{k}\\
B & =m_{t+1}\cdot\cdot\cdot m_{r}y_{1}^{a_{r}},
\end{flalign*}

we have
\begin{gather*}
u^{\left(s\right)}=A\cdot\text{C}\left(u^{\left(s\right)}\right)\cdot B=A\cdot y_{1}^{a_{k}}m_{k+1}\cdot\cdot\cdot m_{t}y_{1}^{a_{t}}\cdot B.
\end{gather*}

It follows that for every $0\leq s'\leq s$,  $u^{\left(s'\right)}$
can also be written as
\begin{align}
  u^{\left(s'\right)}=A\cdot\text{C}\left(u^{\left(s'\right)}\right)\cdot B=A\cdot y_{1}^{b_{k}}m_{k+1}\cdot\cdot\cdot m_{t}y_{1}^{b_{t}}\cdot B
  \label{p}
\end{align}
for some $b_{k},...,b_{t}$ with $b_{k}+...+b_{t}>s'$. As the right-hand
side of (\ref{p}) is a monomial in 
\begin{gather*}
F_{n+s'}\left(m_{1},...,m_{r}\right)=\underset{\underset{c_{0}+...+c_{r}=n+s'}{c_{0},...,c_{r}\in\mathbb{Z}_{\geq0}}}{\sum}y_{1}^{c_{0}}m_{1}y_{1}^{c_{1}}\cdot\cdot\cdot y_{1}^{c_{r-1}}m_{r}y_{1}^{c_{r}},
\end{gather*}

we obtain that $u^{\left(s'\right)}$ is also a monomial in $F_{n+s'}\left(m_{1},...,m_{r}\right)$,
arising by the same monomials $m_{k+1},...,m_{t}$. Clearly, the same argument works for every $s\leq s'$. Letting $Q:=\left|m_{k+1}\right|+...+\left|m_{t}\right|$,
we see that for \textit{every} $s$, the coefficient of $u^{\left(s\right)}$
in $F_{s+n}\left(m_{1},...,m_{r}\right)$ depends only on the number
\begin{flalign*}
q\left(s\right) & =\left|\left\{ \left(c_{k},...,c_{t}\right):c_{k}+...+c_{t}+\left|m_{k+1}\right|+..+\left|m_{t}\right|=\left|\text{C}\left(u^{\left(s\right)}\right)\right|\right\} \right|\\
 & =\left|\left\{ \left(c_{k},...,c_{t}\right):c_{k}+...+c_{t}+Q=\left|\text{C}\left(u\right)\right|+s\right\} \right|\\
 & =\left(\begin{array}{c}
\left(t-k\right)+\left|\text{C}\left(u\right)\right|-Q+s\\
\left(t-k\right)
\end{array}\right),
\end{flalign*}

which is a polynomial in $s$. 
\end{proof}

\begin{proposition}
\label{6_parts}
Fix $K\in\mathbb{N}$, and let $O=\left\{ A_{1},...,A_{n}\right\} \in\Omega_{n,n+K}$
be an ordered partition of $n+K$. Then

\textbf{(1)} For every $i$,  $\left|A_{i}\right|\leq K+1$.

\textbf{(2)} $\left|\left\{ i\in\left[n\right]:\left|A_{i}\right|\geq2\right\} \right|\leq K$.

\textbf{(3)} Fix $\mu\vdash4K$, and let $n\geq3K$. For every
$T\in\text{Tab}\left(\mu^{\left(n-3K\right)}\right)$, the monomials
$m_{1},...,m_{n} $ in 
\begin{gather*}
P_{n}\left(m_{1},...,m_{n}\right):=\text{Sub}_{\left(\text{x},\text{y}\right)}^{T}\left(P_{O}\right)
\end{gather*}

satisfy the following:

(i) For every $i$, $\left|m_{i}\right|\leq K+1$.

(ii) $\left|\left\{ i\in\left[n\right]:\left|m_{i}\right|\geq2\text{ }\right\} \right|\leq K$.

(iii) $\left|\left\{ i\in\left[n\right]:m_{i}\neq y_{1}\right\} \right|\leq16K^{2}$.
\end{proposition}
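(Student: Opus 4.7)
The plan is pure bookkeeping: parts (1) and (2) are statements about the integer partition encoded by $O$, while part (3) pushes them through the length-preserving substitution $\text{Sub}_{(x,y)}^{T}$ and then reads the relevant numbers off the shape of $T$.

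I will begin by proving (1) and (2) together. Since $\{A_{1},\ldots,A_{n}\}$ is an ordered partition of $[n+K]$ into $n$ non-empty parts (empty parts would correspond to the empty monomial, which is unavailable in the non-unitary algebra $\mathbb{F}\langle X\rangle$), one has $\sum_{i=1}^{n}|A_{i}|=n+K$ with each $|A_{i}|\geq 1$. Setting $b_{i}:=|A_{i}|-1\geq 0$, the identity $\sum_{i}b_{i}=K$ immediately forces $|A_{i}|\leq K+1$ for every $i$ (giving (1)), and at most $K$ of the $b_{i}$ can be nonzero (giving (2)).

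For (3), the main ingredient is that $\text{Sub}_{(x,y)}^{T}$ is a length-preserving monoid homomorphism sending each $x_{j}$ to the single variable $y_{r(j)}$, where $r(j)$ denotes the row of $j$ in $T$. Thus $|m_{i}|=|A_{i}|$, and (i) and (ii) become exact translations of (1) and (2). To prove (iii), I will partition $\{i:m_{i}\neq y_{1}\}$ into two subsets: those with $|A_{i}|\geq 2$ (at most $K$ of these, by (ii)), and those with $|A_{i}|=1$ whose unique element lies outside the first row of $T$. To bound the latter it suffices to count the entries of $T$ outside row~1. Since $T$ has shape $\mu^{(n-3K)}$ with $|\mu|=4K$, the first row contains $\mu_{1}+(n-3K)$ entries, so the rows of index $\geq 2$ together hold $(n+K)-\mu_{1}-(n-3K)=4K-\mu_{1}\leq 4K$ entries.

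Combining these bounds yields $|\{i:m_{i}\neq y_{1}\}|\leq K+4K=5K$, which is comfortably below the claimed $16K^{2}$ (for every $K\geq 1$). I do not anticipate any genuine obstacle; the only point requiring a small amount of attention is the observation that adjoining $n-3K$ boxes to the right upper corner enlarges only the first row of $D_{\mu}$, so the count $4K-\mu_{1}$ of entries outside the first row of $\mu^{(n-3K)}$ depends on $\mu$ alone and not on $n$.
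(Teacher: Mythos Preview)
Your proof is correct. Parts (1), (2), (3)(i), and (3)(ii) are handled exactly as in the paper, just with the cosmetic change of writing $b_i=|A_i|-1$ rather than arguing by direct inequality.

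For (3)(iii) your argument is actually sharper and more carefully stated than the paper's. The paper simply observes that $D_\mu$ sits inside a $4K\times 4K$ box and bounds the number of entries of $T$ outside the first row by $4K(4K-1)<16K^{2}$, without explicitly separating off the indices $i$ with $|A_i|\geq 2$ (those $m_i$ can equal $y_1^{|A_i|}\neq y_1$ even when every entry of $A_i$ lies in row~1, so this case genuinely needs to be addressed). Your two-case split handles this cleanly and yields the stronger bound $5K$, which of course still implies the stated $16K^{2}$.
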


\begin{proof}
\textbf{(1)} Considering $A_{1},...,A_{n}$ as sets, we have $A_{1}\sqcup...\sqcup A_{n}=\left[n+K\right]$.
If (without loss of generality) $\left|A_{1}\right|>K+1$, we obtain
a contradiction:
\begin{gather*}
n+K=\left|A_{1}\right|+\left|A_{2}\right|+...+\left|A_{n}\right|>K+1+\stackrel{n-1}{\overbrace{1+\cdot\cdot\cdot+1}}=n+K.
\end{gather*}

Similarly, if $r:=\left|\left\{ i\in\left[n\right]:\left|A_{i}\right|\geq2\right\} \right|$
and without loss of generality $\left|A_{1}\right|,...,\left|A_{r}\right|\geq2$,
then 
\begin{gather*}
n+K=\left|A_{1}\right|+\cdot\cdot\cdot+\left|A_{r}\right|+\left|A_{r+1}\right|+\cdot\cdot\cdot+\left|A_{n}\right|\geq2r+\stackrel{n-r}{\overbrace{1+\cdot\cdot\cdot+1}}=n+r,
\end{gather*}

and \textbf{(2)} follows. For \textbf{(3)}, note that by definition we have 
\begin{gather*}
P_{n}\left(m_{1},...,m_{n}\right)=\text{Sub}_{\left(\text{x},\text{y}\right)}^{T}\left(P_{O}\right)=\text{Sub}_{\left(\text{x},\text{y}\right)}^{T}\left(P_{n}(x_{A_{1}},...,x_{A_{n}})\right),
\end{gather*}
where $m_{i}=\text{Sub}_{\left(\text{x},\text{y}\right)}^{T}(x_{A_{i}})$.
Thus, part (i) follows immediately from \textbf{(1)}. Similarly, part (ii) follows
from \textbf{(2)}.

Finally, for (iii), note that as $\mu$ is a partition of $4K$, its
diagram is contained in a the diagram of size $4K\times4K$. By definition,
the diagram corresponding to $\mu^{\left(n-3K\right)}$
is obtained by adding $\left(n-3K\right)$ boxes to the right upper
corner of the diagram of $\mu$. Since the map $\text{Sub}_{\left(\text{x},\text{y}\right)}^{T}$
substitutes $x_{j}\mapsto y_{i_{j}}$, where $i_{j}$ is the number
of the row for which $j$ appears in $T$, there are at most $4K\left(4K-1\right)<16K^{2}$
variables $x_{j}$ such that $i_{j}\neq1$ and $x_{j}\mapsto y_{i_{j}}$.
\end{proof}

\begin{proposition}
\label{d_large}
Let $P=P_{n}\left(m_{1},...,m_{n}\right)\in\mathbb{F}\left\langle Y\right\rangle $
be a symmetric polynomial, and let $p>0$.
If $d$ is large enough, then   $\left|\text{C}\left(u\right)\right|\geq p$ for every monomial in $P^{\left(d\right)}$.
\end{proposition}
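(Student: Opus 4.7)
The plan is to use the explicit formula from Proposition \ref{NNM} to read off the structure of monomials in $P^{(d)}$ and then apply the pigeonhole principle on the exponents of the $y_1$-gaps.

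First, I would invoke Proposition \ref{NNM}, which gives
\begin{gather*}
P^{(d)} = d!\sum_{\sigma\in S_n} F_d(m_{\sigma(1)},\ldots,m_{\sigma(n)}).
\end{gather*}
By the definition of $F_d$, every monomial $u$ appearing in $P^{(d)}$ can be written in the form
\begin{gather*}
u = y_1^{a_0}\, m_{\sigma(1)}\, y_1^{a_1}\, m_{\sigma(2)}\, y_1^{a_2}\cdots y_1^{a_{n-1}}\, m_{\sigma(n)}\, y_1^{a_n},
\end{gather*}
for some $\sigma\in S_n$ and non-negative integers $a_0,\ldots,a_n$ satisfying $a_0+a_1+\cdots+a_n = d$.

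Next, I would apply pigeonhole: since $a_0+\cdots+a_n = d$ is a sum of $n+1$ non-negative integers, at least one index $j$ satisfies $a_j \geq d/(n+1)$. The corresponding block $y_1^{a_j}$ is by construction a submonomial of $u$ of the form $y_1^r$, so by the definition of $C(u)$ (the largest such submonomial), we obtain $|C(u)| \geq a_j \geq d/(n+1)$.

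Therefore, choosing $d \geq p(n+1)$ guarantees $|C(u)| \geq p$ for every monomial $u$ in $P^{(d)}$, and the proposition follows. There is no real obstacle here; the only subtlety is that although portions of the $m_i$'s could in principle extend the $y_1$-block beyond $a_j$, this can only increase $|C(u)|$, so the straightforward pigeonhole bound on the $a_j$'s alone already suffices.
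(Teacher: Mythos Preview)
Your proof is correct and follows essentially the same approach as the paper: both arguments observe that every monomial in $P^{(d)}$ arises by inserting $d$ copies of $y_1$ into $n+1$ gaps between the $m_i$'s, and then apply the pigeonhole principle to force one block $y_1^{a_j}$ to be large. Your version is in fact slightly more explicit, invoking Proposition~\ref{NNM} and giving the concrete threshold $d\geq p(n+1)$, whereas the paper simply states the pigeonhole conclusion informally.
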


\begin{proof}
 By definition, the monomials in $P^{\left(d\right)}=P_{n}\left(m_{1},...,m_{n},y_{1},...,y_{1}\right)$
are obtained by placing the variables $\stackrel{d}{\overbrace{y_{1},...,y_{1}}}$
between the monomials $m_{1},...,m_{n}$. Thus,  when $d$ tends to
infinity, each monomial in $P^{\left(d\right)}$ must contain some block
of the form $y_{1}\cdot\cdot\cdot\cdot y_{1}$ of size at least $p$ by the pigeonhole principle.
\end{proof}
We are now ready to prove Lemma \ref{tecnical}:

\begin{proof}[Proof of Lemma \ref{tecnical}]
Let $\mu\vdash4K$. By Proposition \ref{6_parts}, for every $n\geq3K$, $O\in\Omega_{n,n+K}$
and  $P_{n}\left(m_{1},...,m_{n}\right)=\text{Sub}_{\left(\text{x},\text{y}\right)}^{T}\left(P_{O}\right)$,
there are at most $16K^{2}$ monomials in $\left\{ m_{1},...,m_{n}\right\} $
which are not of the form $y_{1}$. Hence, for all $n$ large enough, one can write  $P_{n}\left(m_{1},...,m_{n}\right)=P^{\left(n-D\right)}\left(m_{1},...,m_{D}\right)$
for a fixed $D$ and monomials $m_{1},...,m_{D}$
with 
\begin{gather*}
2\leq\left|m_{i}\right|\stackrel{\ref{6_parts}}{\leq}K+1,\,\,\,\,\,i=1,...,D.
\end{gather*}

Applying Proposition \ref{NNM}, we can write
\begin{gather*}
P_{n}\left(m_{1},...,m_{n}\right)=P^{\left(n-D\right)}\left(m_{1},...,m_{D}\right)=\left(n-D\right)!\underset{\sigma\in S_{D}}{\sum}F_{n-D}\left(m_{\sigma\left(1\right)},...,m_{\sigma\left(D\right)}\right).
\end{gather*}

Now, by Proposition \ref{d_large}, for every $p>0$, if $n$ is large enough we have $\left|\text{C\ensuremath{\left(u\right)}}\right|>p$ for each monomial $u$
in $P_{n}\left(m_{1},...,m_{n}\right)$. 
Hence,  given that $n$ is sufficiently large, Lemma \ref{tec_oftec} implies that for any $\sigma\in S_{D}$ and monomial $u$ in $F_{n-D}\left(m_{\sigma\left(1\right)},...,m_{\sigma\left(D\right)}\right)$  the coefficient of $u^{\left(s\right)}$ in 
\begin{gather*}
F_{n+s-D}\left(m_{\sigma\left(1\right)},...,m_{\sigma\left(D\right)}\right)=\frac{1}{\left(n+s-D\right)!}P^{\left(n+s-D\right)}\left(m_{1},...,m_{n}\right)
\end{gather*}
 is a polynomial in $s$, and the lemma follows. 

\end{proof}

\section{Estimates on \texorpdfstring{$\text{dim}\left(W_{n,m}\right)$}{Lg} }

In this section, we give upper and lower bounds on $\text{dim}\left(W_{n,m}\right)$,
and review the special cases $m=n+1$ and $m=n+2$, where $\text{dim}\left(W_{n,m}\right)$
is precisely known. We also conjecture the asymptotic behavior of $\text{dim}\left(W_{n,n+K}\right)$ for a fixed $K$. It should be noted that for a given $n$, we are only interested in the case $m<n^{2}$, since we already know that 
$\text{dim}\left(W_{n,m}\right)=m!$ for every $n^{2}\leq m$ (see Proposition \ref{lst} and the paragraph following it).   

\subsection{Upper bound}

\begin{lemma}
\label{upper}
$\text{dim}\left(W_{n,m}\right)\leq\left(\begin{array}{c}
m\\
n
\end{array}\right)\cdot\frac{\left(m-1\right)!}{\left(n-1\right)!}$.

\end{lemma}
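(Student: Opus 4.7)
The plan is to reduce the upper bound to a combinatorial count of the spanning set for $W_{n,m}$ given in Corollary \ref{Deacrip}, and then evaluate that count directly.

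First I would invoke Corollary \ref{Deacrip}, which states that
\begin{gather*}
W_{n,m} = \mathrm{span}_{\mathbb{F}}\{P_O : O \in \Omega_{n,m}\}.
\end{gather*}
Since any spanning set has size at least the dimension, this immediately gives the preliminary estimate $\mathrm{dim}(W_{n,m}) \leq |\Omega_{n,m}|$. All that remains is to count $|\Omega_{n,m}|$ and verify that it equals $\binom{m}{n}\cdot\frac{(m-1)!}{(n-1)!}$.

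For the enumeration, I would view an ordered partition $O = \{A_1,\ldots,A_n\} \in \Omega_{n,m}$ as arising from the following two-step construction: (i) choose any linear arrangement of $[m]$ (there are $m!$ such arrangements), (ii) insert $n-1$ dividers into the $m-1$ gaps between consecutive elements, splitting the arrangement into $n$ non-empty ordered blocks (there are $\binom{m-1}{n-1}$ ways). This produces the ordered tuple $(A_1,\ldots,A_n)$, so it gives $m!\binom{m-1}{n-1}$ tuples in total. Since the parts of $O$ are pairwise disjoint ordered lists and hence pairwise distinct, passing from the tuple to the set divides by $n!$. Thus
\begin{gather*}
|\Omega_{n,m}| = \frac{m!\binom{m-1}{n-1}}{n!} = \frac{m!}{n!(m-n)!}\cdot\frac{(m-1)!}{(n-1)!} = \binom{m}{n}\cdot\frac{(m-1)!}{(n-1)!},
\end{gather*}
which is exactly the claimed upper bound. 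As a sanity check, plugging in $n=2$, $m=3$ recovers $|\Omega_{2,3}| = 6$, consistent with Example \ref{Exa3}.

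There is no serious obstacle here: the only thing that requires care is the counting of $|\Omega_{n,m}|$, specifically the observation that the $A_i$ are automatically distinct (so one divides by $n!$ with no overcounting correction). The bound is of course typically not sharp, since the $\{P_O\}_{O\in\Omega_{n,m}}$ are in general linearly dependent; sharpening it is precisely the content of the subsequent lower-bound and asymptotic discussion.
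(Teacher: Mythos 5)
Your proof is correct and follows essentially the same route as the paper: bound $\dim(W_{n,m})$ by the size of the spanning set $\{P_O : O\in\Omega_{n,m}\}$ from Corollary \ref{Deacrip}, then count $\left|\Omega_{n,m}\right|$ by choosing a word in $m!$ ways, placing $n-1$ dividers in the $m-1$ gaps, and dividing by $n!$ for the unordered set of parts. The only (minor) added value in your write-up is the explicit remark that the parts are automatically distinct, which justifies the clean division by $n!$.
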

\begin{proof}

By Corollary \ref{Deacrip}, $W_{n,m}$ is spanned by $\left\{ P_{O}:O\in\Omega_{n,m}\right\} $,
where $\Omega_{n,m}$ is the set of ordered  partitions of
$\left[m\right]=\left\{ 1,...,m\right\} $ divided into $n$ parts, so it suffices to show
that \linebreak $\left|\Omega_{n,m}\right|=\left(\begin{array}{c}
m\\
n
\end{array}\right)\cdot\frac{\left(m-1\right)!}{\left(n-1\right)!}$.
Indeed, start by choosing a word $w=w_{1}\cdot\cdot\cdot w_{m}$ whose
letters are $1,...,m$ in some order, and then divide $w$ into $n$
parts.
The last division requires a choice
of $n-1$ "cut places"
out of the $m-1$ possibilities,  so all this procedure can be
made in $m!\left(\begin{array}{c}
m-1\\
n-1
\end{array}\right)$ ways. However, each such cut is counted $n!$ times (as we do not
care about their order), \linebreak implying that
\begin{gather*}
\left|\Omega_{n,m}\right|=\left(\begin{array}{c}
m-1\\
n-1
\end{array}\right)\cdot\frac{m!}{n!}=\left(\begin{array}{c}
m\\
n
\end{array}\right)\cdot\frac{\left(m-1\right)!}{\left(n-1\right)!}.
\end{gather*}

\end{proof}

\begin{remark}
As we will see in Theorem \ref{olsson},  $\text{dim}\left(W_{n,n+1}\right)=n\left(n+1\right)=\left(\begin{array}{c}
n+1\\
n
\end{array}\right)\cdot\frac{n!}{\left(n-1\right)!}$, so this bound is tight. 
\end{remark}

\subsection{Lower bound}

The following theorem is due to Latyshev \cite{latyshev1972regev} (see also {\cite[p.137]{kanel2015computational}}):

\begin{theorem}
\label{laty}

Let $f\in\mathbb{F}\left\langle X\right\rangle $ be a polynomial
of degree $n$, and set  $c_{m}:=\text{dim}\left(V_{m}/\left(V_{m}\cap\left(f\right)^{T}\right)\right)$.
Then 
\begin{gather*}
c_{m}\leq\underset{\underset{\ell\left(\lambda\right)<n}{\lambda\vdash m}}{\sum}\left(d^{\lambda}\right)^{2},
\end{gather*}

where $d^{\lambda}$ is the dimension of the irreducible representation
$S^{\lambda}$ of $S_{m}$. 

\end{theorem}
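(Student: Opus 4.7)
The strategy is to show that $V_m/(V_m\cap (f)^T)$ is spanned by the cosets of monomials indexed by the permutations $\sigma\in S_m$ that avoid a strictly decreasing subsequence of length $n$. By the Robinson--Schensted correspondence, together with Schensted's theorem that the length of the longest decreasing subsequence of $\sigma$ equals the number of rows of its RSK shape, the number of such $\sigma$ is precisely $\sum_{\ell(\lambda)<n}(d^\lambda)^2$, giving the desired bound.

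I would first reduce to the case that $f$ is a nonzero multilinear polynomial of degree $n$: in characteristic zero, $(f)^T$ is generated as a $T$-ideal by the complete multilinearization $\tilde f\in V_n$ of $f$, which is a standard fact (cf.\ Proposition 1.2.8 in \cite{drensky2012polynomial}). After an invertible linear change of variables, which preserves $(f)^T$, I may further assume that the coefficient in $f$ of the reverse monomial $x_n x_{n-1}\cdots x_1$ is nonzero. Solving $f \equiv 0 \pmod{(f)^T}$ for that monomial yields a rewriting
\[
x_n x_{n-1}\cdots x_1 \;\equiv\; \sum_{\sigma\in S_n\setminus\{w_0\}}\gamma_\sigma\, x_{\sigma(1)}\cdots x_{\sigma(n)} \pmod{(f)^T},
\]
where $w_0$ denotes the reverse permutation.

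Next, given any $\tau\in S_m$ with a length-$n$ decreasing subsequence $\tau(i_1)>\cdots>\tau(i_n)$ at positions $i_1<\cdots<i_n$, I would apply the identity $f(g_1,\ldots,g_n)\equiv 0\pmod{(f)^T}$ with monomial substitutions $g_1,\ldots,g_n$ chosen so that the $w_0$-term of the expansion reproduces exactly $x_{\tau(1)}\cdots x_{\tau(m)}$. This expresses our monomial modulo $(f)^T$ as an $\mathbb{F}$-linear combination of monomials $x_{\tau'(1)}\cdots x_{\tau'(m)}$, where each $\tau'$ agrees with $\tau$ outside of $\{i_1,\ldots,i_n\}$ but shuffles the $n$ values $\tau(i_1),\ldots,\tau(i_n)$ at those positions into some order other than the fully decreasing one. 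Equipping $S_m$ with a suitable well-ordering (for instance, by the lexicographically largest length-$n$ decreasing subword of $\tau$), each rewriting step strictly decreases $\tau$. Noetherian induction then reduces every coset in $V_m/(V_m\cap (f)^T)$ to a combination of monomials indexed by permutations with no decreasing subsequence of length $n$, and the RSK count completes the proof.

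The principal obstacle is the termination of the rewriting: a single substitution may well introduce new length-$n$ decreasing subsequences elsewhere in $\tau'$, so the total ordering on $S_m$ must be selected with care in order for the procedure to be well-founded. This is essentially a Bergman diamond-lemma argument and is the technical core; the multilinear reduction and the final RSK counting are both standard.
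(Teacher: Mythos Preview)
The paper does not actually prove this statement; it quotes it as Latyshev's theorem with a reference, so there is no in-paper argument to compare against. Your outline is essentially Latyshev's original proof, and the overall strategy (reduce to a multilinear identity, rewrite ``bad'' monomials, count the ``good'' ones via RSK and Schensted's theorem) is correct.

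There is, however, a concrete error in your description of the rewriting step. You claim that after applying $f(g_1,\dots,g_n)\equiv 0$ with monomial substitutions, the resulting $\tau'$ ``agrees with $\tau$ outside of $\{i_1,\dots,i_n\}$.'' This is false. If the $g_k$ are monomials of length greater than one, then the terms $g_{\sigma(1)}\cdots g_{\sigma(n)}$ permute entire \emph{blocks} of $w=x_{\tau(1)}\cdots x_{\tau(m)}$, not just the letters sitting at positions $i_1,\dots,i_n$. For instance, with $n=2$, $f=y_2y_1-y_1y_2$ and $w=x_3x_1x_2$ (decreasing pair at positions $1,2$), the only substitution with $g_2g_1=w$ is $g_2=x_3$, $g_1=x_1x_2$, and the rewritten monomial $g_1g_2=x_1x_2x_3$ does \emph{not} agree with $w$ at position $3$. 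Consequently, your proposed well-ordering ``by the lexicographically largest length-$n$ decreasing subword of $\tau$'' is not obviously compatible with the rewriting, and the termination argument as you sketched it breaks down.

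The fix is simpler than what you propose. Write $w=u\,g_1\cdots g_n$ with $u$ possibly empty and each $g_k$ beginning at position $p_k$ of the chosen length-$n$ subsequence; then the rewriting produces $u\,g_{\sigma(1)}\cdots g_{\sigma(n)}$ for $\sigma\neq e$. Comparing these to $w$ in the ordinary left-to-right lexicographic order on monomials (with $x_1<x_2<\cdots$), the prefix $u$ is unchanged and the first letter of the next block strictly increases, so every term on the right is lex-greater than $w$. Induction on this total order terminates immediately, and the irreducible monomials are exactly the permutations with no increasing subsequence of length $n$; their number is $\sum_{\lambda_1<n}(d^\lambda)^2=\sum_{\ell(\lambda)<n}(d^\lambda)^2$ by conjugation of shapes. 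No diamond-lemma machinery is needed.

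One minor remark on your first reduction: for non-homogeneous $f$ it is not true that $(f)^T$ is generated by a single multilinear polynomial of degree $n$. What you actually need (and what suffices) is that the top-degree homogeneous component of $f$ linearizes to a nonzero multilinear polynomial $\tilde f\in V_n$ lying in $(f)^T$; then $(\tilde f)^T\subseteq (f)^T$, so the codimension for $\tilde f$ dominates that for $f$.
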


\begin{corollary}
For every $n,m\in\mathbb{N}$,$\underset{\underset{\ell\left(\lambda\right)\geq n}{\lambda\vdash m}}{\sum}d_{\lambda}^{2}\leq\text{dim}\left(W_{n,m}\right)$.
\end{corollary}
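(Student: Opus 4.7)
The plan is to apply Theorem \ref{laty} directly to the polynomial $f = x^n$, which has degree $n$. In this case $(f)^T = (x^n)^T$, and therefore $V_m \cap (f)^T = W_{n,m}$ by definition. Latyshev's theorem then yields
\begin{gather*}
\dim\bigl(V_m/W_{n,m}\bigr) \;\leq\; \sum_{\substack{\lambda \vdash m \\ \ell(\lambda) < n}} \bigl(d^\lambda\bigr)^2.
\end{gather*}

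Next I would use the classical identity $\dim(V_m) = m! = \sum_{\lambda \vdash m}(d^\lambda)^2$, which is immediate from part (3) of Theorem \ref{basic_sn} (the decomposition of the regular representation $\mathbb{F}[S_m] \cong \bigoplus_{\lambda \vdash m} d^\lambda S^\lambda$). Combining this with the previous inequality,
\begin{gather*}
\dim(W_{n,m}) \;=\; \dim(V_m) - \dim\bigl(V_m/W_{n,m}\bigr) \;\geq\; \sum_{\lambda \vdash m}\bigl(d^\lambda\bigr)^2 - \sum_{\substack{\lambda \vdash m \\ \ell(\lambda) < n}}\bigl(d^\lambda\bigr)^2 \;=\; \sum_{\substack{\lambda \vdash m \\ \ell(\lambda) \geq n}}\bigl(d^\lambda\bigr)^2,
\end{gather*}
which is exactly the desired inequality.

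There is essentially no obstacle here; the corollary is a direct substitution into Latyshev's bound followed by complementation via the regular-representation identity. The only point worth being careful about is confirming that Latyshev's theorem applies with $f = x^n$ (it does, since $x^n \in \mathbb{F}\langle X\rangle$ has degree $n$), and that the quotient $V_m/W_{n,m}$ is indeed what appears in the statement of that theorem.
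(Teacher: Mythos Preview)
Your proof is correct and follows essentially the same approach as the paper: apply Latyshev's theorem with $f=x^n$, then subtract the resulting bound from the identity $m!=\sum_{\lambda\vdash m}(d^\lambda)^2$ to isolate the contribution from partitions with $\ell(\lambda)\geq n$.
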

\begin{proof}
By Theorem \ref{laty}, $\text{dim}\left(V_{m}/\left(V_{m}\cap\left(x^{n}\right)^{T}\right)\right)=\text{dim}\left(V_{m}/W_{n,m}\right)\leq\underset{\underset{\ell\left(\lambda\right)<n}{\lambda\vdash m}}{\sum}\left(d^{\lambda}\right)^{2}$,
and thus
\begin{gather*}
m!-\text{dim}\left(W_{n,m}\right)=\text{dim}\left(V_{m}\right)-\text{dim}\left(W_{n,m}\right)\leq\underset{\underset{\ell\left(\lambda\right)<n}{\lambda\vdash m}}{\sum}\left(d^{\lambda}\right)^{2}.
\end{gather*}
 Using the well-known fact $m!=\underset{\lambda\vdash m}{\sum}\left(d^{\lambda}\right)^{2}$,
we obtain 
\begin{gather*}
\underset{\underset{\ell\left(\lambda\right)\geq n}{\lambda\vdash m}}{\sum}\left(d^{\lambda}\right)^{2}=m!-\underset{\underset{\ell\left(\lambda\right)<n}{\lambda\vdash m}}{\sum}\left(d^{\lambda}\right)^{2}\leq W_{n,m}.
\end{gather*}
\end{proof}
However, according to our observations, this bound seems
far from being optimal. The following is a better lower bound, in the special case when $n$ is coprime to $m$:

\begin{proposition}
If $n$ is coprime to $m$, then $\frac{m!}{\left(n-1\right)!}\leq\text{dim}\left(W_{n,m}\right)$.
\end{proposition}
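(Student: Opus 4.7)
The plan is to produce an explicit family of $\tfrac{m!}{(n-1)!}$ elements in $W_{n,m}$ whose linear independence reduces to a simple invertibility question in the subalgebra $\mathbb{F}[\langle c\rangle]\subseteq \mathbb{F}[S_m]$, where $c=(1\,2\,\cdots\,m)$ is the long cycle. Let $\Omega'\subseteq \Omega_{n,m}$ consist of those ordered partitions whose parts are $n-1$ singletons together with one ``big'' block of length $m-n+1$; a direct count gives $|\Omega'|=\binom{m}{n-1}(m-n+1)!=\tfrac{m!}{(n-1)!}$. By Corollary \ref{Deacrip} it suffices to verify that $\{P_O:O\in\Omega'\}$ is linearly independent in $V_m$.

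The key step is to write each such $P_O$ as a right-multiplication in the group algebra under the identification $V_m\cong\mathbb{F}[S_m]$. Given $O=\{[a_1],\ldots,[a_{n-1}],B\}\in\Omega'$ with $a_1<\cdots<a_{n-1}$ and $B=b_1\cdots b_{m-n+1}$, let $w_0(O)=x_{a_1}\cdots x_{a_{n-1}}x_{b_1}\cdots x_{b_{m-n+1}}\in V_m$. Viewing $S_{n-1}\leq S_m$ as the subgroup permuting positions $\{1,\ldots,n-1\}$, set $A=\sum_{\rho\in S_{n-1}}\rho$ and $C=1+c+c^2+\cdots+c^{n-1}\in\mathbb{F}[S_m]$. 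I would verify the identity
\[
P_O \;=\; w_0(O)\cdot A\cdot C
\]
by observing that as $(\rho,k)$ ranges over $S_{n-1}\times\{0,\ldots,n-1\}$, the monomial $w_0(O)\,\rho\, c^k$ places $B$ at positions $n-k,\ldots,m-k$ with the singletons $a_{\rho(k+1)},\ldots,a_{\rho(n-1)},a_{\rho(1)},\ldots,a_{\rho(k)}$ arranged cyclically around it, and that this procedure bijects $S_{n-1}\times\{0,\ldots,n-1\}$ with the $n!$ monomials of $P_n(x_{a_1},\ldots,x_{a_{n-1}},B)$.

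Set $\alpha_O:=w_0(O)\cdot A=\sum_{\rho\in S_{n-1}} w_0(O)\rho$, the sum of the $(n-1)!$ monomials in which the $n-1$ singletons appear in all possible orders followed by the word $B$. For distinct $O,O'\in\Omega'$ the supports of $\alpha_O$ and $\alpha_{O'}$ are disjoint (since either the singleton sets or the blocks differ), so $\{\alpha_O\}_{O\in\Omega'}$ is linearly independent and spans a subspace $V'\subseteq V_m$ of dimension $\tfrac{m!}{(n-1)!}$. Because $P_O=\alpha_O\cdot C$, linear independence of $\{P_O\}$ will follow once right multiplication by $C$ is shown injective on $V_m$.

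For this last step, $C$ lies in the commutative subalgebra $\mathbb{F}[\langle c\rangle]\cong\mathbb{F}[x]/(x^m-1)$ and corresponds to the polynomial $1+x+\cdots+x^{n-1}=(x^n-1)/(x-1)$. Passing to $\overline{\mathbb{F}}$, this polynomial shares a root with $x^m-1$ iff some nontrivial $m$-th root of unity is also an $n$-th root of unity, iff $\gcd(n,m)>1$. Under the hypothesis $\gcd(n,m)=1$ the two polynomials are therefore coprime in $\mathbb{F}[x]$, so $C$ is a unit in $\mathbb{F}[\langle c\rangle]\subseteq\mathbb{F}[S_m]$ and right multiplication by $C$ is an automorphism of $V_m$, in particular injective on $V'$. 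The main technical obstacle is the combinatorial verification of the identity $P_O=w_0(O)\cdot A\cdot C$, which requires care in distinguishing the left (substitution of labels) and right (permutation of positions) actions on $V_m\cong\mathbb{F}[S_m]$; once this is done the coprimality hypothesis enters cleanly through the classical invertibility criterion in $\mathbb{F}[x]/(x^m-1)$.
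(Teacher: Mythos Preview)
Your proof is correct and follows essentially the same strategy as the paper: both factor $P_O$ (for $O$ with one long block and $n-1$ singletons) as a product of the $S_{n-1}$-symmetrizer and $1+c+\cdots+c^{n-1}$ with $c=(1\,2\,\cdots\,m)$, and both reduce the dimension count to the invertibility of this cyclic element in $\mathbb{F}[\langle c\rangle]$ under the hypothesis $\gcd(n,m)=1$. The only differences are presentational---the paper computes $\dim\bigl(\mathbb{C}[S_m]\cdot P_\Gamma\bigr)$ via the cyclic left-module structure and proves invertibility through characters after passing to $\mathbb{C}$ (Lemma~\ref{inertible}), whereas you show linear independence of the full set $\{P_O:O\in\Omega'\}$ directly by right-multiplying disjointly supported elements $\alpha_O$ by a unit, arguing invertibility via polynomial coprimality in $\mathbb{F}[x]/(x^m-1)$ without reducing to $\mathbb{C}$.
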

\begin{proof}
We begin with some reductions.

\textbf{Reduction 1.} Consider the partition $\lambda=(m+1-n,\stackrel{n-1}{\overbrace{1,...,1}})\vdash m$,
and let 
\begin{gather*}
W_{n,m}^{\lambda}:=\text{span}_{\mathbb{F}}\left\{ P_{O}\mid O\in\Omega_{n,m}^{\lambda}\right\} \subseteq W_{n,m},
\end{gather*}
where  $\Omega_{n,m}^{\lambda}$ is the set of ordered partitions $O=\left\{ A_{1},...,A_{n}\right\} $
such that $\left|A_{n}\right|=m+1-n$ and $\left|A_{i}\right|=1$
for $i=1,...,n-1$ (see Definition \ref{parpar}). It suffices to show that 
\begin{align}
  \text{dim}_{\mathbb{F}}\left(W_{n,m}^{\lambda}\right)=\frac{m!}{\left(n-1\right)!}.
\label{R6}
\end{align}

\textbf{Reduction 2.} In order to prove (\ref{R6}), we  may assume that $\mathbb{F}=\mathbb{C}$. 

\textbf{Reduction 3.} It is not hard to see that the action of $S_{m}$
on $W_{n,m}^{\lambda}$ is transitive, and therefore $W_{n,m}^{\lambda}$
is a cyclic $\mathbb{\mathbb{C}}\left[S_{m}\right]$-module. In particular,
taking
\begin{gather*}
\Gamma=\left\{ [\![1]\!],[\![2]\!],...,[\![n-1]\!],[\![n....m]\!]\right\} \in\Omega_{n,m}^{\lambda}
\end{gather*}
then
\begin{gather*}
W_{n,m}^{\lambda}=\mathbb{\mathbb{C}}\left[S_{m}\right]\cdot P_{n}\left(x_{1},x_{2},...,x_{n-1},x_{n}\cdot\cdot\cdot x_{m}\right)=\mathbb{\mathbb{C}}\left[S_{m}\right]\cdot P_{\Gamma}.
\end{gather*}

Identifying $\mathbb{\mathbb{C}}\left[S_{m}\right]=V_{m}$ as usual,
one may observe that 
\begin{flalign*}
P_{\Gamma} & =P_{n}\left(x_{1},x_{2},...,x_{n-1},x_{n}\cdot\cdot\cdot x_{m}\right)\\
 & =\left(\underset{\sigma\in {\color{olive}S_{n-1}}}{\sum}\sigma\right)\left({\color{olive}x_{1}\cdot\cdot\cdot x_{n-1}}x_{n}\cdot\cdot\cdot x_{m}+{\color{olive}x_{2}\cdot\cdot\cdot x_{n-1}}x_{n}\cdot\cdot\cdot x_{m}{\color{olive}x_{1}}+...+x_{n}\cdot\cdot\cdot x_{m}{\color{olive}x_{1}\cdot\cdot\cdot x_{n-1}}\right)\\
 & =\left(\underset{\sigma\in S_{n-1}}{\sum}\sigma\right)\left(e+g+...+g^{n-1}\right),
\end{flalign*}

where $g$ is the permutation $g=\left(1,2,...,m\right)\in S_{m}$.
Thus, if we set
$S_{n-1}^{+}:=\underset{\sigma\in S_{n-1}}{\sum}\sigma$ and \linebreak $\zeta_{n-1}:=e+g+g^{2}+...+g^{n-1}$, it suffices to show that
\begin{align}
\text{dim}\left(\mathbb{\mathbb{C}}\left[S_{m}\right]\cdot P_{\Gamma}\right)=\text{dim}\left(\mathbb{\mathbb{C}}\left[S_{m}\right]\cdot S_{n-1}^{+}\cdot\zeta_{n-1}\right)=\frac{m!}{\left(n-1\right)!}.
\label{R7}
\end{align}

To prove (\ref{R7}), we need the following lemma (which as far as the author is aware, does not appear in the literature).

\begin{lemma}
\label{inertible}
Let $G=\left\langle g\right\rangle $ be the cyclic
group of order $n$, and let $0\leq k<n$. The element 
$\zeta_{k}:=e+g+...+g^{k}$
is invertible in $\mathbb{C}\left[G\right]$ if and only if $k+1$ is coprime to $n$.
\end{lemma}
The proof of this lemma can be found in the
the \hyperlink{Appendix}{Appendix}.
However, assuming this lemma, the rest of the proof is straightforward: since we assume that $n$ is coprime to $m$, we obtain that $\zeta_{n-1}$ is invertible in $\mathbb{\mathbb{C}}\left[\left\langle g\right\rangle \right]\subseteq\mathbb{\mathbb{C}}\left[S_{m}\right]$,
and thus one can deduce that 
\begin{gather*}
\text{dim}\left(\mathbb{\mathbb{C}}\left[S_{m}\right]\cdot S_{n-1}^{+}\cdot\zeta_{n-1}\right)=\text{dim}\left(\mathbb{\mathbb{C}}\left[S_{m}\right]\cdot S_{n-1}^{+}\right).
\end{gather*}
Taking a left transversal $\left\{ \tau_{1},...,\tau_{\left[S_{m}:S_{n-1}\right]}\right\} $
for $S_{n-1}$ in $S_{m}$, then $\left\{ \tau_{1}\cdot S_{n-1}^{+},....,\tau_{\left[S_{m}:S_{n-1}\right]}\cdot S_{n-1}^{+}\right\} $
is a basis of $\mathbb{\mathbb{C}}\left[S_{m}\right]\cdot S_{n-1}^{+}$ over $\mathbb{C}$, as one can easily verify. 
It follows that 
\begin{gather*}
\text{dim}\left(\mathbb{\mathbb{C}}\left[S_{m}\right]\cdot S_{n-1}^{+}\cdot\zeta_{n-1}\right)=\text{dim}\left(\mathbb{\mathbb{C}}\left[S_{m}\right]\cdot S_{n-1}^{+}\right)=\left[S_{m}:S_{n-1}\right]=\frac{m!}{\left(n-1\right)!}. \end{gather*}
\end{proof}
\begin{remark}
\label{forr}
In fact, this proof can be implemented to show that there is an isomorphism
of $S_{m}$-representations $W_{n,m}^{\lambda}\cong\text{Ind}_{S_{n-1}}^{S_{m}}\left(\mathbf{1}_{S_{n-1}}\right)$, where $\mathbf{1}_{S_{n-1}}$ is the trivial representation of $S_{n-1}$.
 We leave the
details to the reader.
\end{remark}
\subsection{ \texorpdfstring{$\text{dim}\left(W_{n,n+K}\right)$ when $K$}{Lg} is fixed}
When $K$ is fixed, the dimension of $W_{n,n+K}$ and its decomposition into irreducible representations are known only in two cases: $K=1$, due to Olsson and Regev, and $K=2$, due to Drensky and Benanti.
\subsubsection{ \texorpdfstring{$W_{n,n+1}$ and  $W_{n,n+2}$}{Lg}  }
\begin{theorem}[Olsson and Regev \cite{olsson1977thet}]
\label{olsson}
 For every $n\geq3$, the following isomorphism
of \linebreak $S_{n+1}$-representations holds: 
\begin{gather*}
W_{n,n+1}=S^{\left(n+1\right)}\oplus2S^{\left(n,1\right)}\oplus S^{\left(n-1,2\right)}\oplus S^{\left(n-1,1^{2}\right)},
\end{gather*}
and $\text{dim}\left(W_{n,n+1}\right)=n\left(n+1\right)$.
\end{theorem}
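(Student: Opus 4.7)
My plan is to identify $W_{n,n+1}$ with an induced $S_{n+1}$-representation and then decompose that module via Young's rule.

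First, every ordered partition $O\in\Omega_{n,n+1}$ consists of one block of size two and $n-1$ singleton blocks, so all such $O$ share the shape $(2,1^{n-1})$ and hence $W_{n,n+1}=W_{n,n+1}^{(2,1^{n-1})}$. The lower bound Proposition of Subsection~5.2, applicable since $\gcd(n,n+1)=1$, yields $\dim W_{n,n+1}\geq (n+1)!/(n-1)!=n(n+1)$, matching the upper bound $\binom{n+1}{n}\cdot n!/(n-1)!=n(n+1)$ of Lemma~\ref{upper}. Hence $\dim W_{n,n+1}=n(n+1)$. As observed in Remark~\ref{forr}, the argument in that proposition in fact establishes an isomorphism
\[
W_{n,n+1}\cong\text{Ind}_{S_{n-1}}^{S_{n+1}}(\mathbf{1}_{S_{n-1}}).
\]
Concretely, $W_{n,n+1}=\mathbb{F}[S_{n+1}]\cdot P_\Gamma$ with $P_\Gamma=S_{n-1}^{+}\cdot\zeta_{n-1}$ as an element of $\mathbb{F}[S_{n+1}]$ (where $g=(1,2,\ldots,n+1)$ and $\zeta_{n-1}=e+g+\cdots+g^{n-1}$); since $\zeta_{n-1}$ is invertible in $\mathbb{F}[\langle g\rangle]$ by Lemma~\ref{inertible}, right multiplication by $\zeta_{n-1}^{-1}$ is an $S_{n+1}$-equivariant isomorphism $W_{n,n+1}\to\mathbb{F}[S_{n+1}]\cdot S_{n-1}^{+}\cong\text{Ind}_{S_{n-1}}^{S_{n+1}}(\mathbf{1})$.

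Second, I would decompose $\text{Ind}_{S_{n-1}}^{S_{n+1}}(\mathbf{1})$ by factoring through the intermediate subgroup $S_{n-1}\times S_2$. Since
\[
\text{Ind}_{S_{n-1}}^{S_{n-1}\times S_2}(\mathbf{1}_{S_{n-1}})=\mathbf{1}_{S_{n-1}}\otimes\mathbb{F}[S_2]=\mathbf{1}_{S_{n-1}}\otimes(\mathbf{1}_{S_2}\oplus\text{sgn}_{S_2}),
\]
transitivity of induction gives
\[
\text{Ind}_{S_{n-1}}^{S_{n+1}}(\mathbf{1})\cong\text{Ind}_{S_{n-1}\times S_2}^{S_{n+1}}(\mathbf{1}\otimes\mathbf{1})\oplus\text{Ind}_{S_{n-1}\times S_2}^{S_{n+1}}(\mathbf{1}\otimes\text{sgn}).
\]
Young's rule for horizontal two-strips added to the single-row shape $(n-1)$ decomposes the first summand as $S^{(n+1)}\oplus S^{(n,1)}\oplus S^{(n-1,2)}$, while its dual version for vertical two-strips decomposes the second as $S^{(n,1)}\oplus S^{(n-1,1^{2})}$. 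Summing yields the asserted decomposition, and a hook-length verification recovers $1+2n+\tfrac{(n+1)(n-2)}{2}+\tfrac{n(n-1)}{2}=n(n+1)$, consistent with the dimension found in the first step.

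The step I expect to require the most care is the $S_{n+1}$-equivariance of the isomorphism with the induced representation, since Remark~\ref{forr} explicitly defers the details. The key point is that $S_{n+1}$ acts on $V_{n+1}=\mathbb{F}[S_{n+1}]$ from the left, whereas right multiplication by $\zeta_{n-1}^{-1}$ (which converts $W_{n,n+1}=\mathbb{F}[S_{n+1}]\cdot S_{n-1}^{+}\cdot\zeta_{n-1}$ into $\mathbb{F}[S_{n+1}]\cdot S_{n-1}^{+}$) is a right-side operation and thus commutes with the $S_{n+1}$-action; combined with Lemma~\ref{inertible} this produces the desired equivariant bijection. Once this identification is in place, the remaining computations are routine applications of Young's rule.
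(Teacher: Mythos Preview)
Your proposal is correct and follows essentially the same route as the paper's own argument (given in the Remark immediately after the theorem): both use $\Omega_{n,n+1}=\Omega_{n,n+1}^{(2,1^{n-1})}$ together with the invertibility of $\zeta_{n-1}$ to identify $W_{n,n+1}$ with $\mathrm{Ind}_{S_{n-1}}^{S_{n+1}}(\mathbf{1})$. The only difference is that the paper then cites Diaconis for the decomposition of this induced module, whereas you compute it directly via transitivity through $S_{n-1}\times S_2$ and Pieri's rule; your explicit computation is a welcome self-contained addition.
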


\begin{remark}
Observing that $\Omega_{n,n+1}=\Omega_{n,n+1}^{(2,1^{n-1})}$, Theorem \ref{olsson} follows directly from Equation (\ref{R6}) and Remark \ref{forr}. Indeed, $n$ is coprime to $n+1$, and it is well known (see, e.g., {\cite[p.148]{diaconis1988group}}) that  $\text{Ind}_{S_{n-1}}^{S_{n+1}}\left(\mathbf{1}_{s_{n-1}}\right)$ is isomorphic to $S^{\left(n+1\right)}\oplus2S^{\left(n,1\right)}\oplus S^{\left(n-1,2\right)}\oplus S^{\left(n-1,1^{2}\right)}$.

\end{remark}

\begin{theorem}[Drensky and Benanti \cite{benanti1999polynomial}]
\label{banana}

For every $n\geq6$, the following isomorphism
of \linebreak $S_{n+2}$-representations holds: 

\begin{flalign*}
W_{n,n+2} & =S^{\left(n+2\right)}\oplus4S^{\left(n+1,1\right)}\oplus5S^{\left(n,2\right)}\oplus5S^{\left(n,1^{2}\right)}\oplus3S^{\left(n-1,3\right)}\oplus6S^{\left(n-1,2,1\right)}\\
 & \oplus3S^{\left(n-1,1^{3}\right)}\oplus S^{\left(n-2,4\right)}\oplus S^{\left(n-2,3,1\right)}\oplus2S^{\left(n-2,2^{2}\right)}\oplus S^{\left(n-2,2,1^{2}\right)}\oplus S^{\left(n-2,1^{4}\right)}.
\end{flalign*}

\end{theorem}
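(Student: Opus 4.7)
The plan is to run the $GL_k$-representation machinery of Sections 2 and 3 with $K=2$ and then identify the kernel of the symmetrization map of Proposition~\ref{omomorphic}. By Theorem~\ref{upper_W}(1), only $S^{\mu}$ with $\ell(\mu)\le 2K+1=5$ can appear in $W_{n,n+2}$, and by Theorem~\ref{upper_W}(2) every such $\mu$ (for $n\ge 6$) is derived from some partition of $4K=8$. Moreover, Lemma~\ref{M_lemma} together with Young's rule forces $\mu_1\ge n-K=n-2$ (since each summand of $[A_k^{\otimes_s n}]^{(n+2)}$ has the form $V^{(n-2+c_i)}\otimes M_i$ with $c_i\ge 0$, and tensoring with a one-row Weyl module of length $\ell$ produces only $V^{\nu}$ with $\nu_1\ge \ell$). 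The partitions $\mu\vdash n+2$ of length $\le 5$ with $\mu_1\ge n-2$ are exactly the twelve shapes listed in the statement, so the \emph{support} of the decomposition is immediately correct.

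First I would pass to the $GL_5(\mathbb F)$ side via Theorem~\ref{Connection_Sn_Glk}: $m_{n,n+2}^{\mu}$ equals the multiplicity of $V^{\mu}$ in $[A_5]^{(n+2)}\cap(x^n)^T$, which by Proposition~\ref{omomorphic} is a quotient of $[A_5^{\otimes_s n}]^{(n+2)}$. Using Lemma~\ref{M_lemma}, $[A_5^{\otimes_s n}]^{(n+2)}\cong\bigoplus_{i=1}^{p}V^{(n-2+c_i)}\otimes M_i$ with $\deg M_i\le 4$. The pairs $(c_i,M_i)$ can be read off from the decomposition of $[A_5^{\otimes_s 2}]^{(4)}$, which by Lemma~\ref{decomp_of_sym} breaks into symmetric squares $\mathrm{Sym}^2(V^{\lambda})$ with $|\lambda|=2$, pairwise tensor products $V^{\lambda}\otimes V^{\lambda'}$ with $|\lambda|+|\lambda'|=4$ and $\lambda\neq\lambda'$, and mixed pieces $V^{(1)}\otimes_s V^{\lambda}$ with $|\lambda|=3$. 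Each irreducible factor is computable from Theorem~\ref{Schur} and standard plethystic decompositions, so the output is an explicit finite list.

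Next, applying Young's rule to each $V^{(n-2+c_i)}\otimes M_i$ and summing gives an \emph{upper} bound for every $m_{n,n+2}^{\mu}$. A direct bookkeeping computation should show these upper bounds match the claimed multiplicities $(1,4,5,5,3,6,3,1,1,2,1,1)$ exactly; since $m_{n,n+2}^{\mu}$ is bounded above by the source-side multiplicity, matching then reduces to producing the corresponding \emph{lower} bound.

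For the lower bound I would use Lemma~\ref{mult_is_dim} together with Corollary~\ref{regev_col}: for each shape $\mu$ of claimed multiplicity $m$, exhibit $m$ ordered partitions $O_1,\dots,O_m\in\Omega_{n,n+2}$ and a tableau $T_{\mu}\in\mathrm{Tab}(\mu)$ such that $e_{T_{\mu}}P_{O_1},\dots,e_{T_{\mu}}P_{O_m}$ are linearly independent in $W_{n,n+2}$; by Lemma~\ref{First_reduction} it suffices to check this at a single base value of $n$ and then propagate to all larger $n$. The \textbf{main obstacle} is producing these witnesses uniformly across the twelve shapes, especially for the high-multiplicity cases $(n-1,2,1)$, $(n,2)$, $(n,1^2)$, and sharpening the propagation so that the base case is already $n_0=6$ rather than some larger $N$ produced abstractly by Theorem~\ref{stab}. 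This last sharpening is essentially the content of the conjecture following Theorem~\ref{Tricky}, and will likely require explicit Gram-determinant calculations (assisted by the program of Niv Levhari mentioned in the acknowledgments) at $n=6$.
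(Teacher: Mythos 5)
There is a genuine gap, and it sits at the center of your plan: you assert that the upper bounds coming from the source side of Proposition \ref{omomorphic} ``match the claimed multiplicities exactly,'' and then propose to close the argument by exhibiting matching lower bounds. This is false for $K=2$. Carrying out the very bookkeeping you describe, the summands of $\left[A_{k}^{\otimes_{s}n}\right]^{\left(n+2\right)}$ are $V^{\left(n-1\right)}\otimes\left(V^{\left(3\right)}\oplus2V^{\left(2,1\right)}\oplus V^{\left(1^{3}\right)}\right)$ and $V^{\left(n-2\right)}\otimes\left(V^{\left(4\right)}\oplus2V^{\left(2,2\right)}\oplus V^{\left(1^{4}\right)}\oplus V^{\left(3,1\right)}\oplus V^{\left(2,1,1\right)}\right)$, and Young's rule gives multiplicities $2,5,7,5,3,6,3,1,1,2,1,1$ for the twelve shapes, whereas the theorem asserts $1,4,5,5,3,6,3,1,1,2,1,1$. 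So the symmetrization map $\phi$ of Proposition \ref{omomorphic} has a nontrivial kernel in degree $n+2$ (killing $V^{\left(n+2\right)}\oplus V^{\left(n+1,1\right)}\oplus2V^{\left(n,2\right)}$), and the squeeze between an upper and a lower bound cannot close. A quick sanity check makes the discrepancy unavoidable: the multiplicity of the trivial module $S^{\left(n+2\right)}$ in $W_{n,n+2}$ is at most $1$, because the space of fully symmetric multilinear polynomials of degree $n+2$ is one-dimensional, while $V^{\left(n+2\right)}$ occurs twice in the symmetric power (once from $V^{\left(n-1\right)}\otimes V^{\left(3\right)}$ and once from $V^{\left(n-2\right)}\otimes V^{\left(4\right)}$). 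Thus the substantive work is precisely what your plan omits: identifying the relations among the polynomials $P_{n}\left(m_{1},...,m_{n}\right)$, i.e.\ the kernel of $\phi$ in degree $n+2$ (equivalently, an exact computation of $\text{dim}\left(e_{T}W_{n,n+2}\right)$ from the spanning set $\left\{ P_{O}\right\} $, upper bound included). This is the content of Benanti and Drensky's argument; the paper itself does not reprove the theorem but cites \cite{benanti1999polynomial}, so the statement's ``proof'' here is a citation, and the support/upper-bound machinery of Theorem \ref{upper_W} was never intended to pin the multiplicities down.

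A secondary problem: even where your upper bounds are correct, your lower-bound step propagates witnesses from a base value of $n$ via Lemma \ref{First_reduction}, which only guarantees independence for all \emph{sufficiently large} $n$; obtaining the statement for every $n\geq6$ from a computation at $n=6$ is exactly the unproved conjecture following Theorem \ref{Tricky}, so as written your plan would at best prove the decomposition for $n$ large, not for all $n\geq6$.
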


\begin{corollary}
\label{colcol}
For every $n\geq6$, $\text{dim}W_{n,n+2}=\frac{1}{2}n\left(n+2\right)\left(n^{2}+2n-1\right)$.
\end{corollary}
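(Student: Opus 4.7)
The plan is to obtain $\dim W_{n,n+2}$ directly from Theorem \ref{banana} by summing the multiplicities times the dimensions of the irreducible constituents, using the hook length formula (Theorem \ref{hooh}) to evaluate each $\dim S^{\lambda}$. Since Theorem \ref{banana} already displays the full isotypic decomposition, no representation-theoretic reasoning beyond the hook length formula is required; the content is purely a polynomial identity in $n$.

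First I would compile a table of dimensions. The hook-type partitions $(n+2-k,1^{k-1})$ for $k=1,\ldots,5$ have dimension $\binom{n+1}{k-1}$, giving $1$, $n+1$, $\binom{n+1}{2}$, $\binom{n+1}{3}$, $\binom{n+1}{4}$. For two-row partitions $(n+2-k,k)$ with $2 \le k \le 4$, the classical identity $\dim S^{(n+2-k,k)} = \binom{n+2}{k} - \binom{n+2}{k-1}$ yields $\tfrac{(n+2)(n-1)}{2}$, $\tfrac{(n+2)(n+1)(n-3)}{6}$, and $\tfrac{(n+2)(n+1)n(n-5)}{24}$ respectively. The remaining three partitions $(n-1,2,1)$, $(n-2,3,1)$, $(n-2,2,1^2)$, $(n-2,2^2)$ are computed one-by-one from Theorem \ref{hooh}; for example, for $(n-1,2,1) \vdash n+2$ the hooks in row $1$ are $n+2,\,n-1,\,n-3,n-4,\ldots,1$, in row $2$ they are $3,1$, and in row $3$ the single hook equals $1$, yielding $\dim S^{(n-1,2,1)} = \tfrac{(n+2)!}{(n+2)(n-1)(n-3)!\cdot 3} = \tfrac{n(n+1)(n-2)}{3}$. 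Analogous short computations give the dimensions of $(n-2,3,1)$, $(n-2,2^2)$, and $(n-2,2,1^2)$.

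Next I would assemble the sum
\begin{gather*}
\dim W_{n,n+2} = 1 + 4(n+1) + 5 \cdot \tfrac{(n+2)(n-1)}{2} + 5\cdot \tfrac{n(n+1)}{2} + 3 \cdot \tfrac{(n+2)(n+1)(n-3)}{6} + 6\cdot\dim S^{(n-1,2,1)} + \ldots
\end{gather*}
inserting each multiplicity from Theorem \ref{banana}. After placing everything over a common denominator and simplifying, the claim is that the total collapses to $\tfrac{1}{2}n(n+2)(n^2+2n-1)$. As a consistency check, plugging in $n=6$ should give $\tfrac{1}{2}\cdot 6\cdot 8 \cdot 47 = 1128$, which one can corroborate directly from the dimensions above.

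The only real obstacle is bookkeeping: Theorem \ref{banana} is guaranteed to hold only for $n \ge 6$, and a few of the partitions listed degenerate for small $n$ (e.g.\ $(n-2,4)$ requires $n-2 \ge 4$, i.e.\ $n\ge 6$, for the partition to be well-formed with the stated shape), so I would verify that all twelve partitions remain valid partitions for $n\ge 6$, which is precisely the hypothesis. Once that is checked, the identity $\tfrac{1}{2}n(n+2)(n^2+2n-1)$ is simply the result of simplifying the polynomial sum, and the corollary follows.
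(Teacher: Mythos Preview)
Your approach is exactly the paper's: the authors simply say the corollary is ``a straightforward calculation using the hook length formula'' and leave the details to the reader, and you are supplying those details. One small arithmetic slip to fix: for $\lambda=(n-1,2,1)\vdash n+2$ the hook at box $(1,1)$ is $n+1$, not $n+2$ (arm $n-2$, leg $2$), so the row-1 hooks are $n+1,\,n-1,\,n-3,\,n-4,\ldots,1$ and $\dim S^{(n-1,2,1)}=\tfrac{n(n+2)(n-2)}{3}$; with that correction the sum indeed collapses to $\tfrac12 n(n+2)(n^2+2n-1)$.
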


\begin{proof}
This is a straightforward calculation using  the hook length formula (Theorem \ref{hooh}). We leave the
details to the reader.
\end{proof}
\subsubsection{ \texorpdfstring{$W_{n,n+K}$ for  $K>2$}{Lg}  }
When $K$ is fixed, the upper bound of Theorem \ref{upper} yields
\begin{flalign*}
\text{dim}\left(W_{n,n+K}\right) & \leq\left(\begin{array}{c}
n+K\\
n
\end{array}\right)\cdot\frac{\left(n+K-1\right)!}{\left(n-1\right)!}=\frac{1}{K!}\left(n+K\right)\cdot\left(n+K-1\right)^{2}\cdot\cdot\cdot\left(n+1\right)^{2}\cdot n.
\end{flalign*}
This bound, combined with the above results, suggests the following conjecture: 
\begin{conjecture}
Fix $K\in\mathbb{N}$. There exist constants $c_{K},C_{K}>0$ such
that 
\begin{gather*}
c_{K}\cdot n^{2K}\leq\text{dim}\left(W_{n,n+K}\right)\leq C_{K}\cdot n^{2K}.
\end{gather*}
\end{conjecture}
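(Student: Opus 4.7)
The upper bound falls essentially for free from Lemma~\ref{upper}: for fixed $K$, the count
\begin{gather*}
\binom{n+K}{n}\cdot\frac{(n+K-1)!}{(n-1)!} \;=\; \frac{1}{K!}\,n(n+K)\prod_{j=1}^{K-1}(n+j)^{2}
\end{gather*}
is a polynomial in $n$ of degree exactly $2K$ with leading coefficient $1/K!$, so any $C_{K}$ slightly above $1/K!$ works for all sufficiently large $n$, and $C_{K}$ can then be inflated to absorb the finitely many small values of $n$.

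The lower bound is the real work, and the plan is to combine the stabilization theorem with the hook length formula. Pick $N$ as provided by Theorem~\ref{stab}, so that $W_{n,n+K}\cong\bigoplus_{\lambda\vdash N+K} m^{\lambda} S^{\lambda^{(n-N)}}$ for every $n\ge N$. A routine inspection of $\lambda^{(n-N)}$ shows that the newly adjoined $n-N$ cells in the first row contribute the hook product $(n-N)!$, each of the $\lambda_{1}$ original first-row hooks grows by exactly $n-N$, and every other hook is unchanged. After cancellation against $(n+K)!$, the hook length formula (Theorem~\ref{hooh}) yields
\begin{gather*}
\dim S^{\lambda^{(n-N)}} \;=\; \Theta\!\left(n^{|\lambda|-\lambda_{1}}\right) \;=\; \Theta\!\left(n^{N+K-\lambda_{1}}\right).
\end{gather*}
This is maximized at value $\Theta(n^{2K})$ precisely when $\lambda_{1}=N-K$, i.e.\ when $\lambda_{2}+\cdots+\lambda_{r}=2K$. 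Hence establishing the lower bound reduces to producing even a single partition $\lambda\vdash N+K$ with $\lambda_{1}=N-K$ and $m^{\lambda}>0$.

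To exhibit such a $\lambda$, the plan is to work on the $GL_{k}$-side via Lemma~\ref{M_lemma}, which gives the decomposition $\left[A_{k}^{\otimes_{s}N}\right]^{(N+K)}\cong\bigoplus_{i} V^{(N-K+c_{i})}\otimes M_{i}$ with $\deg M_{i}\le 2K$. Decomposing each $M_{i}$ into Weyl modules $V^{\nu}$ (with $|\nu|=2K-c_{i}$) and applying Young's rule to $V^{(N-K+c_{i})}\otimes V^{\nu}$, one can locate Weyl modules $V^{\mu}$ of the desired shape $\mu_{1}=N-K$ by placing the $N-K-\nu_{1}$ ``row-one'' new boxes and the remaining $c_{i}+\nu_{1}$ excess new boxes into rows $\ge 2$ consistently with the column-distinctness condition of Young's rule. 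For $N\ge 3K$ and a favorable choice of summand (e.g.\ $c_{i}=0$ and $\nu=(2K)$, producing $\mu=(N-K,2K)$), this configuration does exist in the source, so $V^{\mu}$ appears there. The remaining step is then to transport this information to $W_{N,N+K}$ via the surjection $\phi$ of Proposition~\ref{omomorphic} and the Berele--Drensky correspondence (Theorem~\ref{Connection_Sn_Glk}).

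The main obstacle lies exactly in that transport: $\phi$ is merely a $GL_{k}$-\emph{surjection}, so a Weyl module present in the source could in principle lie entirely inside $\ker\phi$ and contribute nothing to $W_{N,N+K}$. Ruling this out will almost certainly require producing an explicit non-zero witness; a natural candidate is a Young-symmetrized version of $P_{N}(x_{1}x_{2},\,x_{3}x_{4},\,\ldots,\,x_{2K-1}x_{2K},\,x_{2K+1},\,\ldots,\,x_{N+K})$, and showing via Corollary~\ref{regev_col} and the linearization machinery of Section~4 that its image under $e_{T_{\mu}}$ is non-zero for a suitable standard tableau $T_{\mu}$ of shape $\mu=(N-K,2K)$ (or a related two-row-type shape). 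Strong evidence that this strategy can succeed comes from the verified cases $K=1,2$ (Theorems~\ref{olsson}, \ref{banana}): both exhibit multiple such partitions with strictly positive multiplicities, and the totals $n(n+1)$ and $\tfrac{1}{2}n(n+2)(n^{2}+2n-1)$ match the upper-bound leading constant $1/K!$ \emph{exactly}, suggesting that with enough care one can in fact take $c_{K}=C_{K}=1/K!$.
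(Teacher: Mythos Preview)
This statement is presented in the paper as a \emph{conjecture}, not a theorem; the paper offers no proof of it, only the motivating observation (immediately preceding the conjecture) that the upper bound of Lemma~\ref{upper} is a polynomial of degree $2K$ in $n$. Your upper-bound argument is exactly this observation and is correct.

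For the lower bound you have correctly reduced the problem: by Theorem~\ref{stab} and Proposition~\ref{polyp}, $\dim W_{n,n+K}$ is eventually a polynomial $p_{K}(n)$, and since each summand $\dim S^{\lambda^{(n-N)}}$ is a polynomial of degree $\lambda_{2}+\cdots+\lambda_{r}$ with positive leading coefficient, $\deg p_{K}=\max\{\lambda_{2}+\cdots+\lambda_{r}:m^{\lambda}>0\}$. The upper bound forces this maximum to be at most $2K$, so the conjecture is equivalent to exhibiting a single $\lambda\vdash N+K$ with $\lambda_{2}+\cdots+\lambda_{r}=2K$ and $m^{\lambda}>0$. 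You state this clearly.

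However, you do not actually establish the existence of such a $\lambda$. Your attempt via Lemma~\ref{M_lemma} and Young's rule only locates the candidate Weyl module in the \emph{source} $[A_{k}^{\otimes_{s}N}]^{(N+K)}$ of the surjection $\phi$ of Proposition~\ref{omomorphic}, and, as you yourself point out, nothing prevents that copy from lying in $\ker\phi$. The fallback plan of checking $e_{T_{\mu}}P_{N}(x_{1}x_{2},\ldots,x_{2K-1}x_{2K},x_{2K+1},\ldots,x_{N+K})\neq 0$ for $\mu=(N-K,2K)$ is reasonable, and the evidence from $K=1,2$ is encouraging, but the verification is not carried out and is not obviously routine for general $K$. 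So the upper half of the conjecture is settled and the lower half remains, by your own account, open --- which is precisely the status the paper assigns to it.
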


Next, we exploit our main result to show that the dimension of $W_{n,n+K}$ can be described (for a fixed $K$) by a polynomial, given that $n$ is sufficiently large.
\begin{proposition}
\label{polyp}
Given $D\in\mathbb{N}$ and a partition $\lambda\vdash D$, there
exists a polynomial $q\left(x\right)$ such that $\text{dim}\left(S^{\lambda^{\left(d\right)}}\right)=q\left(d\right)$
for every $d\in\mathbb{N}\cup\left\{ 0\right\} $.
\end{proposition}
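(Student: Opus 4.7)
The plan is to apply the hook length formula (Theorem \ref{hooh}) to $\lambda^{(d)}$ directly and track exactly how each hook length depends on $d$. Writing $\lambda = (\lambda_{1}, \ldots, \lambda_{r}) \vdash D$ and $\lambda^{(d)} = (\lambda_{1} + d, \lambda_{2}, \ldots, \lambda_{r}) \vdash D + d$, I would split the boxes of $D_{\lambda^{(d)}}$ into three groups and analyze each.

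First, for a box $(i,j)$ with $i \geq 2$: its arm and leg are unchanged when boxes are appended to row 1 (the arm stays in row $i$, and no column with $j \leq \lambda_{i} \leq \lambda_{1}$ gets longer). So $h_{i,j}^{\lambda^{(d)}} = h_{i,j}^{\lambda}$, and the product of these hook lengths equals a constant $A := \prod_{i \geq 2,\, j \leq \lambda_{i}} h_{i,j}^{\lambda}$ that does not depend on $d$. Second, for an old row-1 box $(1,j)$ with $1 \leq j \leq \lambda_{1}$: its arm length grows by exactly $d$, while its leg is unchanged, so $h_{1,j}^{\lambda^{(d)}} = h_{1,j}^{\lambda} + d$. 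Third, for a new box $(1, \lambda_{1} + k)$ with $1 \leq k \leq d$: it has empty leg and arm length $d - k$, giving hook length $d - k + 1$; the product over these $d$ boxes is $d!$. Combining,
\begin{equation*}
\dim\bigl(S^{\lambda^{(d)}}\bigr) \;=\; \frac{(D+d)!}{A \cdot d! \cdot \prod_{j=1}^{\lambda_{1}} \bigl(d + h_{1,j}^{\lambda}\bigr)}.
\end{equation*}

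Now I would use the identity $(D+d)!/d! = \prod_{k=1}^{D}(d+k)$, which is already a polynomial in $d$ of degree $D$. It remains to observe that the row-1 hook lengths $h_{1,1}^{\lambda} > h_{1,2}^{\lambda} > \cdots > h_{1,\lambda_{1}}^{\lambda}$ are distinct positive integers, and each is at most $D$ (being the cardinality of a subset of $D_{\lambda}$). Hence $\{h_{1,j}^{\lambda}\}_{j=1}^{\lambda_{1}} \subseteq \{1, 2, \ldots, D\}$, so $\prod_{j=1}^{\lambda_{1}}(d + h_{1,j}^{\lambda})$ divides $\prod_{k=1}^{D}(d+k)$ in $\mathbb{Z}[d]$, with quotient $\prod_{k \in S}(d+k)$ where $S = \{1,\ldots,D\} \setminus \{h_{1,j}^{\lambda} : 1 \leq j \leq \lambda_{1}\}$. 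Dividing further by the constant $A$ yields
\begin{equation*}
\dim\bigl(S^{\lambda^{(d)}}\bigr) \;=\; \frac{1}{A} \prod_{k \in S}(d + k),
\end{equation*}
which is a polynomial $q(d) \in \mathbb{Q}[d]$ of degree $D - \lambda_{1}$, as desired.

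There is no real obstacle here beyond the careful combinatorial bookkeeping. The key conceptual point is that the product $d!$ coming from the $d$ newly adjoined boxes cancels precisely against the $d!$ in the denominator of $(D+d)!/d!$, leaving a ratio of two polynomials in $d$; and the divisibility of denominator by numerator is forced by the well-known fact that first-row hook lengths are distinct integers in $\{1, \ldots, D\}$. One could alternatively avoid the integrality check on $A$ by noting that $q$ is a polynomial in $\mathbb{Q}[d]$ taking the integer value $\dim(S^{\lambda^{(d)}})$ on $d = 0, 1, 2, \ldots$, which is all the proposition requires.
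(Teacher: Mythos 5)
Your proposal is correct and follows essentially the same route as the paper: apply the hook length formula to $\lambda^{(d)}$, observe that hooks outside row $1$ are constant, old row-$1$ hooks become $d+h_{1,j}^{\lambda}$, and the $d$ new boxes contribute $d!$, yielding exactly the paper's expression $\frac{(d+D)\cdots(d+1)}{A\cdot\prod_{j}(d+h_{1,j}^{\lambda})}$. The only difference is that you justify explicitly why this ratio is a polynomial (first-row hook lengths are distinct elements of $\{1,\dots,D\}$, so the denominator factors cancel), a point the paper leaves implicit.
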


 \begin{proof}
Let $\lambda=\left(\lambda_{1},...,\lambda_{p}\right)\vdash D$.  Writing
the hook lengths $\left\{ h_{i,j}^{\lambda}\right\} $ and
$\left\{ h_{i,j}^{\lambda^{\left(d\right)}}\right\} $ inside $D_{\lambda}$
and $D_{\lambda^{\left(d\right)}}$ (see Definition \ref{hookk} and Example
\ref{ex_will}), we obtain
\begin{center}
\begin{figure}[H]
    \centering
    \includegraphics[width=1\textwidth]{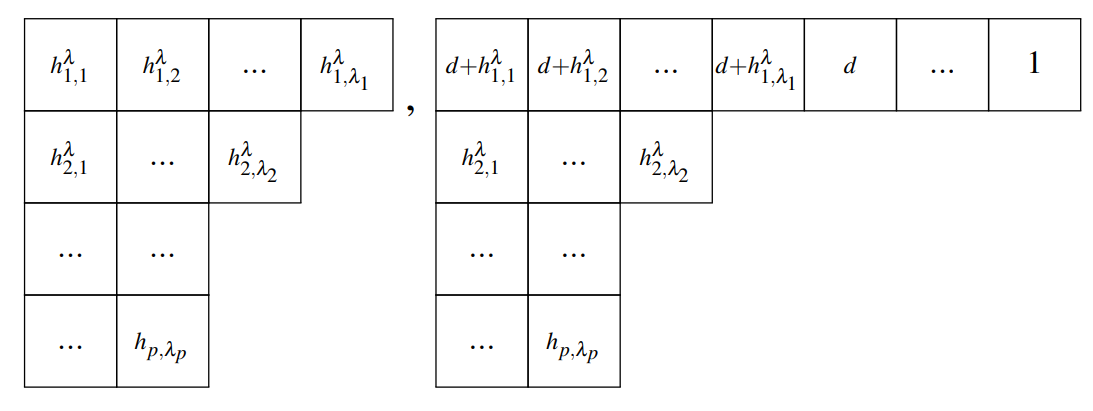}.
\end{figure}
\end{center}

Thus, by the hook length formula (Theorem \ref{hooh}), \begin{flalign*}
\text{dim}\left(S^{\lambda^{\left(d\right)}}\right) & =\frac{\left(d+D\right)!}{\underset{i\neq1,\left(i,j\right)\in D_{\lambda}}{\prod}h_{i,j}^{\lambda}\cdot\left(d+h_{1,1}^{\lambda}\right)\cdot\cdot\cdot\left(d+h_{1,\lambda_{1}}^{\lambda}\right)\cdot d!}\\
 & =\frac{\left(d+D\right)\cdot\cdot\cdot\left(d+2\right)\left(d+1\right)}{\underset{i\neq1,\left(i,j\right)\in D_{\lambda}}{\prod}h_{i,j}^{\lambda}\cdot\left(d+h_{1,1}^{\lambda}\right)\cdot\cdot\cdot\left(d+h_{1,\lambda_{1}}^{\lambda}\right)}
\end{flalign*}
which is a polynomial in $d$.
\end{proof}

\begin{corollary}
\label{popo}
Fix $K\in\mathbb{N}$. There is an integer $N_{K}>0$ and a polynomial $p_{K}\left(x\right)\in\mathbb{F}\left[x\right]$ such that, for every $n\geq N_{K}$, $p_{K}\left(n\right)=\text{dim}\left(W_{n,n+K}\right)$.
\end{corollary}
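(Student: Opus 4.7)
The plan is to combine the stabilization Theorem \ref{stab} with the polynomiality of dimensions given by Proposition \ref{polyp}. Concretely, fix $K\in\mathbb{N}$ and let $N$ be the integer provided by Theorem \ref{stab}, so that for every $n\geq N$ we have the decomposition
\begin{gather*}
W_{n,n+K}\cong\underset{\lambda\vdash N+K}{\bigoplus}m_{N,N+K}^{\lambda}S^{\lambda^{(n-N)}}.
\end{gather*}
Set $N_K:=N$.

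Taking dimensions, for every $n\geq N_K$ we obtain
\begin{gather*}
\text{dim}\left(W_{n,n+K}\right)=\underset{\lambda\vdash N+K}{\sum}m_{N,N+K}^{\lambda}\cdot\text{dim}\left(S^{\lambda^{(n-N)}}\right).
\end{gather*}
The sum is finite (indexed by partitions of $N+K$), and the multiplicities $m_{N,N+K}^{\lambda}$ are fixed integers, independent of $n$.

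By Proposition \ref{polyp}, for each $\lambda\vdash N+K$ there exists a polynomial $q_{\lambda}(x)\in\mathbb{F}[x]$ with $\text{dim}(S^{\lambda^{(d)}})=q_{\lambda}(d)$ for every $d\in\mathbb{Z}_{\geq 0}$. Defining
\begin{gather*}
p_K(x):=\underset{\lambda\vdash N+K}{\sum}m_{N,N+K}^{\lambda}\cdot q_{\lambda}(x-N),
\end{gather*}
we get a polynomial in $x$ (a finite $\mathbb{F}$-linear combination of polynomials), and by construction $p_K(n)=\text{dim}(W_{n,n+K})$ for every $n\geq N_K$, as required.

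There is essentially no obstacle here: the corollary is a direct consequence of the two cited results, and the only thing to verify is that the substitution $d=n-N$ preserves polynomiality, which is immediate. All the real work has already been done in proving the stabilization theorem (Theorem \ref{stab}) and the hook-length computation behind Proposition \ref{polyp}.
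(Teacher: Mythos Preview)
Your proof is correct and follows essentially the same approach as the paper: combine the stabilization result (Theorem \ref{stab}) with the polynomiality of $\dim(S^{\lambda^{(d)}})$ in $d$ (Proposition \ref{polyp}), and sum. The only cosmetic difference is that the paper indexes the sum by partitions $\lambda\vdash 4K$ (using Theorem \ref{upper_W}(2)) rather than by $\lambda\vdash N+K$ as you do, but this is just a reparametrization of the same finite sum.
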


\begin{proof}
We have seen in Theorem \ref{stab}   that the decomposition of $W_{n,n+K}$ is
stabilized,
where the decomposition involves only partitions derived from partitions of
$4K$. By Proposition \ref{polyp}, for each $\lambda\vdash4K$, the dimensions
$\left(\text{dim}\left(S^{\lambda^{\left(d\right)}}\right)\right)_{d\in\mathbb{N}}$
can be described by a polynomial. Hence, if $n$ large enough, 
\begin{gather*}
\text{dim}\left(W_{n,n+K}\right)=\underset{\lambda\vdash4K}{\sum}m^{\lambda}\text{dim}\left(S^{\lambda^{\left(n-3K\right)}}\right)
\end{gather*}
can also be described by a polynomial.
\end{proof}
\begin{example}
According to Theorem \ref{olsson} and Corollary \ref{colcol}, $p_{1}\left(n\right)=n\left(n+1\right)$
for every $n\geq3$, and $p_{2}\left(n\right)=\frac{1}{2}n\left(n+2\right)\left(n^{2}+2n-1\right)$ for every $n\geq6$.
\end{example}
\begin{conjecture}
In Corollary \ref{popo}, one can take $N_{K}=2K$.
\end{conjecture}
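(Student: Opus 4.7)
The plan centers on the explicit description of $p_K$ via Corollary \ref{popo} and Proposition \ref{polyp}: if $m^{\lambda}$ denotes the stable multiplicities of $W_{3K, 4K}$ and $q_{\lambda}(d) = \dim S^{\lambda^{(d)}}$ is the polynomial extension furnished by Proposition \ref{polyp}, then
\begin{gather*}
p_K(n) = \sum_{\lambda \vdash 4K} m^{\lambda}\, q_{\lambda}(n - 3K).
\end{gather*}
For $n \geq 3K$ the identity $p_K(n) = \dim W_{n,n+K}$ is immediate from Theorem \ref{stab}. The conjecture is therefore equivalent to the $K$ additional identities $p_K(n) = \dim W_{n,n+K}$ for $n = 2K, 2K+1, \dots, 3K-1$.

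The delicate point is that, in this transition range, some partitions $\lambda \vdash 4K$ have $\lambda^{(n-3K)}$ that is not a valid partition (precisely when $\lambda_1 + (n-3K) < \lambda_2$), while $q_{\lambda}(n - 3K)$ may nonetheless be nonzero; a small computation with the Drensky--Benanti decomposition (Theorem \ref{banana}) shows, for instance, $q_{(4,4)}(-2) = -5$. Hence the ``naive'' sum of valid pushdown dimensions with stable multiplicities need not equal $p_K(n)$, and the conjecture implicitly asserts a precise cancellation between invalid-partition contributions and any departure of the actual decomposition of $W_{n, n+K}$ from the derived shape. I would approach this in two steps. First, sharpen Theorem \ref{upper_W}(2) from $n \geq 3K$ to $n \geq 2K$ by refining the analysis of $[A_k^{\otimes_s n}]^{(n+K)}$ in Lemma \ref{M_lemma} and Corollary \ref{der_M}, tracking the correlation between the values $c_i$ and $\text{deg}(M_i)$; intuitively, summands with $c_i$ small have $M_i$ built from many irreducibles of degree $\geq 2$, forcing their row lengths to be strictly smaller than the crude bound $2K$ and allowing the derivation step of Corollary \ref{der_M}(2) to go through at the smaller threshold $l = n - K \geq K$. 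Second, combine this with the spanning description of Corollary \ref{Deacrip} and an adaptation of the stretching techniques of Section~4 (running in reverse, i.e.\ ``contracting'' rather than extending the first row) to pin down, for each valid $\lambda^{(n-3K)}$, the exact multiplicity in $W_{n, n+K}$.

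The main obstacle is the source of the required cancellation: there is no representation-theoretic reason visible a~priori why the polynomial $p_K$ should continue to give the correct dimension below the derived regime. The stretching machinery of Section~4, especially Lemma \ref{tecnical}, produces coefficients that are polynomial in a \emph{non-negative} parameter $s$; the conjecture effectively claims this polynomial identity persists down to $s = -1, -2, \dots, -K$, but the supporting combinatorial arguments (notably Lemma \ref{tec_oftec} and the use of $\text{C}(u)$) degenerate at $s = 0$ and cannot be run backwards. A promising route is to reinterpret the dimension identity through a character-theoretic formula in which the polynomial extension is manifest, for example an Euler-characteristic-type alternating sum or a Weyl-character realization that treats ``invalid'' partitions via the usual $\rho$-shifted straightening. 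Absent a conceptual breakthrough, the natural first step is computer verification for $K = 3$ and $K = 4$, both to test the conjecture and to suggest the combinatorial identity responsible for the cancellation that would have to lie at the heart of any proof.
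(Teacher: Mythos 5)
This statement is one of the paper's open conjectures: the paper offers no proof of it, so there is nothing to compare your argument against --- and your proposal, as you yourself acknowledge, is a research plan rather than a proof. Two concrete gaps. First, your opening reduction is not valid: you claim that $p_K(n)=\dim W_{n,n+K}$ for all $n\geq 3K$ is ``immediate from Theorem \ref{stab}'', but that theorem only produces an unspecified threshold $N$ (potentially much larger than $3K$) beyond which $W_{n,n+K}$ is derived from $W_{N,N+K}$; stabilization already at $n=3K$ is essentially the paper's other conjecture (that in Theorem \ref{Tricky} one may take $N_K=1$ and any $n\geq 3K$), so the problem is not equivalent to the $K$ exceptional values $n=2K,\dots,3K-1$ --- it also includes the whole range $3K\leq n<N$. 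Second, for the genuinely new range $2K\leq n<3K$ you identify the right difficulty (invalid partitions $\lambda^{(n-3K)}$ with nonzero polynomial values, e.g.\ your computation $q_{(4,4)}(-2)=-5$ is correct and matches the straightening relation $s_{(2,4)}=-s_{(3,3)}$), but neither of your two steps is carried out: sharpening Theorem \ref{upper_W}(2) to $n\geq 2K$ is only sketched and would not by itself give the dimension identity, and the proposed ``contraction'' of the first row has no supporting machinery, since Lemma \ref{tec_oftec} and Lemma \ref{tecnical} produce polynomial coefficients only for $s\geq 0$ and degenerate rather than extrapolate to $s=-1,\dots,-K$.

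That said, your diagnosis is sound and consistent with why the paper leaves this open: the polynomial $p_K$ is defined through the stable multiplicities, and below the derived regime its correctness would require a cancellation between contributions of non-partitions (interpreted via $\rho$-shifted straightening or an Euler-characteristic-type formula) and the actual, possibly different, decomposition of $W_{n,n+K}$. Your suggestion to test $K=3,4$ computationally and to look for a character-theoretic formulation in which the polynomial continuation is built in is a reasonable way forward, but as it stands the proposal establishes nothing beyond what the paper already conjectures.
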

\begin{problem}
 \textit{Find $p_{3}\left(n\right)$, and calculate the decomposition
of $W_{n,n+3}$}.
\end{problem}

\section*{\hypertarget{Appendix}{\textbf{Appendix - proof of Lemma \ref{inertible}}}}

\begin{lemma*}
Let $G=\left\langle g\right\rangle $ be the cyclic
group of order $n$, and let $0\leq k<n$. The element \linebreak
$\zeta_{k}:=e+g+...+g^{k}$
is invertible in $\mathbb{C}\left[G\right]$ if and only if $k+1$ is coprime to $n$.
\end{lemma*}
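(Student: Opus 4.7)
The plan is to pass from the group algebra to a polynomial algebra modulo $x^n-1$, split it via the CRT into a product of copies of $\mathbb{C}$ indexed by the $n$-th roots of unity, and then read off invertibility from the values of a geometric sum.

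First, I would use the standard isomorphism
\begin{gather*}
\Phi:\mathbb{C}[G]\xrightarrow{\ \sim\ }\mathbb{C}[x]/(x^n-1),\qquad g\mapsto x,
\end{gather*}
under which $\zeta_k=e+g+\cdots+g^k$ corresponds to the class of $f_k(x):=1+x+\cdots+x^k$. By the Chinese Remainder Theorem (since $x^n-1=\prod_{\omega^n=1}(x-\omega)$ in $\mathbb{C}[x]$ has only simple roots),
\begin{gather*}
\mathbb{C}[x]/(x^n-1)\cong\prod_{\omega^n=1}\mathbb{C}[x]/(x-\omega)\cong\prod_{\omega^n=1}\mathbb{C},
\end{gather*}
where the projection onto the $\omega$-th factor sends the class of a polynomial $f(x)$ to the scalar $f(\omega)$. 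Consequently, an element of $\mathbb{C}[x]/(x^n-1)$ is a unit precisely when its image under every such projection is nonzero, i.e.\ precisely when $f(\omega)\neq 0$ for every $n$-th root of unity $\omega$.

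Applying this to $f_k$: for $\omega=1$ we have $f_k(1)=k+1\neq 0$, so this root never obstructs invertibility. For $\omega\neq 1$, the geometric sum formula gives
\begin{gather*}
f_k(\omega)=\frac{\omega^{k+1}-1}{\omega-1},
\end{gather*}
which vanishes iff $\omega^{k+1}=1$. Hence $\zeta_k$ is invertible iff there is no $n$-th root of unity $\omega\neq 1$ satisfying $\omega^{k+1}=1$. Setting $d:=\gcd(n,k+1)$, the $n$-th roots of unity that satisfy $\omega^{k+1}=1$ are exactly the $d$-th roots of unity. Therefore, the only such $\omega$ is $\omega=1$ iff $d=1$, i.e.\ iff $k+1$ is coprime to $n$.

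There is essentially no obstacle here beyond being careful to check both directions; the only mild subtlety is ensuring that the $\omega=1$ case is handled separately (it never causes a problem because $\mathbb{C}$ has characteristic zero and $0\leq k<n$ forces $k+1\neq 0$), so the invertibility criterion depends entirely on the behavior at $\omega\neq 1$.
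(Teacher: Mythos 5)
Your proof is correct and is essentially the same argument as the paper's: the CRT decomposition of $\mathbb{C}[x]/(x^n-1)$ into copies of $\mathbb{C}$ indexed by the $n$-th roots of unity is exactly the paper's decomposition of $\mathbb{C}[G]$ via the character idempotents $e_{\chi_l}$ (on each of which $g$ acts as the scalar $\omega_n^l$), and in both cases invertibility reduces to the nonvanishing of the geometric sum $1+\omega+\cdots+\omega^{k}$ at every root of unity, which fails precisely when $\gcd(k+1,n)>1$. Your explicit check of the $\omega=1$ factor, where the value is $k+1\neq 0$, is if anything slightly cleaner than the paper's treatment of $\chi_0$.
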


\begin{proof}
Note that $\zeta_{k}$ is invertible in $\mathbb{C}\left[G\right]$
if and only if it is not contained in any proper ideal. Since proper
ideals in $\mathbb{C}\left[G\right]$ are direct sums of one-dimensional
irreducible representations, $\zeta_{k}$ being irreducible is equivalent to the
following: for every irreducible representation $V$ of $G$, and
for every idempotent $e_{V}$ corresponding to $V$ (which is just the projection on $V$), 
$\zeta_{k}e_{V}\neq0$. Clearly, we always may assume that $e_{V}\neq\text{Id}$.

Set $\omega_{n}:=\text{exp}\left(\frac{2\pi i}{n}\right)$, and recall
that the characters of irreducible representations of $G$ are given
in $\left\{ \chi_{0},\chi_{1},...,\chi_{n-1}\right\}$, where $\chi_{l}\left(g^{s}\right)=\omega_{n}^{sl}$, and so the corresponding
idempotents $\left\{ e_{\chi_{0}},e_{\chi_{1}},...,e_{\chi_{n-1}}\right\} $
are given by the well-known formula
\begin{gather*}
e_{\chi_{l}}=\frac{1}{n}\stackrel[s=0]{n-1}{\sum}\chi_{l}\left(g^{-s}\right)g^{s}=\frac{1}{n}\stackrel[s=0]{n-1}{\sum}\omega_{n}^{-sl}g^{s}.
\end{gather*}

Suppose $\zeta_{k}e_{\chi_{l}}=0$ for some $l\neq0$ (note that $e_{\chi_{0}}=Id$). Since 
\begin{gather*}
ge_{\chi_{l}}=\frac{1}{n}\stackrel[s=0]{n-1}{\sum}\omega_{n}^{-sl}g^{s+1}=\omega_{n}^{l}e_{\chi_{l}},
\end{gather*}
it follows that
\begin{align}
0=\zeta_{k}e_{\chi_{l}}=\left(1+\omega_{n}^{l}+...+\omega_{n}^{kl}\right)e_{\chi_{l}}=\left(\frac{1-\left(\omega_{n}^{l}\right)^{k+1}}{1-\omega_{n}^{l}}\right)e_{\chi_{l}},
\label{t0}
\end{align}

which implies $\left(\omega_{n}^{l}\right)^{k+1}=1$. In particular
\begin{gather*}
n\mid l\left(k+1\right),
\end{gather*}
and (as $l<n$) $k+1$ and $n$ are not coprime.

Conversely, if $k+1$ and $n$ are not coprime, there exists
$l<n$ such that $n\mid l\left(k+1\right)$, and thus by Equation (\ref{t0}) we obtain  $\zeta_{k}e_{\chi_{l}}=0$, so $\zeta_{k}$ is not an invertible
element.
\end{proof}
\newpage


\begin{thebibliography}{10}

\bibitem{benanti1999polynomial}
F.~Benanti and V.~Drensky.
\newblock Polynomial identities of nil algebras of bounded index.
\newblock {\em Bollettino dell'Unione Matematica Italiana}, 2:673--691, 1999.

\bibitem{berele1982homogeneous}
A.~Berele.
\newblock Homogeneous polynomial identities.
\newblock {\em Israel journal of mathematics}, 42(3):258--272, 1982.

\bibitem{dent2000conjecture}
S.C. Dent and J.~Siemons.
\newblock On a conjecture of {Foulkes}.
\newblock {\em Journal of Algebra}, 226(1):236--249, 2000.

\bibitem{diaconis1988group}
P.~Diaconis.
\newblock {Group Representations in Probability and Statistics}.
\newblock {\em Lecture notes-monograph series}, 11:i--192, 1988.

\bibitem{drensky1984codimensions}
V.~Drensky.
\newblock {Codimensions of {$T$-ideals} and Hilbert series of relatively free
  algebras}.
\newblock {\em Journal of Algebra}, 91(1):1--17, 1984.

\bibitem{drensky2012polynomial}
V.~Drensky and E.~Formanek.
\newblock {\em {Polynomial Identity Rings}}.
\newblock Birkh{\"a}user, 2012.

\bibitem{dubnov1943abaissement}
J.~Dubnov and V.~Ivanov.
\newblock Sur l’abaissement du degr{\'e} des polyn{\^o}mes en affineurs.
\newblock In {\em Dokl. Akad. Nauk SSSR}, volume~41, pages 95--98, 1943.

\bibitem{giambruno2005polynomial}
A.~Giambruno and M.~Zaicev.
\newblock {\em {Polynomial Identities and Asymptotic Methods}}.
\newblock Number 122. American Mathematical Soc., 2005.

\bibitem{henke235explicit}
A.~Henke and A.~Regev.
\newblock Explicit decompositions of the group algebras
  {$\mathbb{F}\left[S_{n}\right]$ and $\mathbb{F}\left[A_{n}\right]$}.
\newblock {\em Lecture Notes in Pure and Appl. Math}, 235:329--357.

\bibitem{higman1956conjecture}
G.~Higman and P.~Hall.
\newblock On a conjecture of {Nagata}.
\newblock In {\em Mathematical Proceedings of the Cambridge Philosophical
  Society}, volume~52, pages 1--4. Cambridge University Press, 1956.

\bibitem{james2006representation}
G.D. James.
\newblock {\em {The Representation Theory of the Symmetric Groups}}, volume
  682.
\newblock Springer, 2006.

\bibitem{kanel2015computational}
A.~Kanel-Belov, Y.~Karasik, and L.H. Rowen.
\newblock {\em Computational Aspects of Polynomial Identities: Volume l,
  Kemer's Theorems}, volume~16.
\newblock CRC Press, 2015.

\bibitem{kuzmin1975nagata}
E.N. Kuzmin.
\newblock On the {Nagata-Higman} theorem.
\newblock {\em Proceedings dedicated to the 60th birthday of Academician Iliev,
  Sofia}, pages 101--107, 1975.

\bibitem{latyshev1972regev}
V.N. Latyshev.
\newblock On {Regev's} theorem on identities in a tensor product of
  {PI-algebras}.
\newblock {\em Uspekhi Matematicheskikh Nauk}, 27(4):213--214, 1972.

\bibitem{10.2969/jmsj/00430296}
M.~Nagata.
\newblock {On the nilpotency of nil-algebras.}
\newblock {\em Journal of the Mathematical Society of Japan}, 4(3-4):296 --
  301, 1952.

\bibitem{olsson1977thet}
J.B. Olsson and A.~Regev.
\newblock On the {$T$-ideal} generated by a standard identity.
\newblock {\em Israel Journal of Mathematics}, 26(2):97--104, 1977.

\bibitem{procesi2007lie}
C.~Procesi.
\newblock {\em Lie groups: an approach through invariants and representations},
  volume 115.
\newblock Springer, 2007.

\bibitem{razmyslov1974trace}
Yu.~P. Razmyslov.
\newblock Trace identities of full matrix algebras over a field of
  characteristic zero.
\newblock {\em Mathematics of the USSR-Izvestiya}, 8(4):727, 1974.

\bibitem{regev1980polynomial}
A.~Regev.
\newblock The polynomial identities of matrices in characteristic zero.
\newblock {\em Communications in {Algebra}}, 8(15):1417--1467, 1980.

\bibitem{regev1982polynomial}
A.~Regev.
\newblock A polynomial rate of growth for the multiplicities in cocharacters of
  matrices.
\newblock {\em Israel Journal of Mathematics}, 42(1):65--74, 1982.

\bibitem{rowen2008graduate}
L.H. Rowen.
\newblock {\em Graduate Algebra: Noncommutative View: Noncommutative View},
  volume~2.
\newblock American Mathematical Soc., 2008.

\bibitem{vaughan1993algorithm}
M.~Vaughan-Lee.
\newblock An algorithm for computing graded algebras.
\newblock {\em Journal of {Symbolic Computation}}, 16(4):345--354, 1993.

\end{thebibliography}
\end{document}